  \theoremstyle{remark}
  \newtheorem{rem}{\protect\remarkname}
\theoremstyle{plain}
\newtheorem{thm}{\protect\theoremname}
  \theoremstyle{definition}
  \newtheorem{defn}{\protect\definitionname}
  \theoremstyle{plain}
  \newtheorem{lem}{\protect\lemmaname}
\DeclareFontFamily{OT1}{pzc}{}
\DeclareFontShape{OT1}{pzc}{m}{it}{<-> s * [1.200] pzcmi7t}{}
\DeclareMathAlphabet{\mathpzc}{OT1}{pzc}{m}{it}
\renewcommand\footnotemark{}
  \providecommand{\definitionname}{Definition}
  \providecommand{\lemmaname}{Lemma}
  \providecommand{\remarkname}{Remark}
\providecommand{\theoremname}{Theorem}
\begin{document}

\title{\textbf{Asymptotically Optimal Discrete Time}\\
\textbf{Nonlinear Filters From Stochastically Convergent }\\
\textbf{State Process Approximations}}

\author{Dionysios S. Kalogerias\thanks{The Authors are with the Department of Electrical \& Computer Engineering,
Rutgers, The State University of New Jersey, 94 Brett Rd, Piscataway,
NJ 08854, USA. e-mail: \{d.kalogerias, athinap\}@rutgers.edu.}\thanks{This paper constitutes an extended and most up-to-date version of
\cite{KalPetNonlinear_2015}. The work presented herein is supported
by the National Science Foundation (NSF) under Grant CNS-1239188.} and Athina P. Petropulu}

\date{May 2016}

\maketitle
\textbf{\vspace{-30pt}
}
\begin{abstract}
We consider the problem of approximating optimal in the Minimum Mean
Squared Error (MMSE) sense nonlinear filters in a \textit{discrete
time} setting, exploiting properties of stochastically convergent
state process approximations. More specifically, we consider a class
of nonlinear, partially observable stochastic systems, comprised by
a (possibly nonstationary) hidden stochastic process (the state),
observed through another conditionally Gaussian stochastic process
(the observations). Under general assumptions, we show that, given
an approximating process which, for each time step, is stochastically
convergent to the state process, an approximate filtering operator
can be defined, which converges to the true optimal nonlinear filter
of the state in a strong and well defined sense. In particular, the
convergence is compact in time and uniform in a \textit{completely
characterized} measurable set of probability measure almost unity,
also providing a purely quantitative justification of Egoroff's Theorem
for the problem at hand. The results presented in this paper can form
a common basis for the analysis and characterization of a number of
heuristic approaches for approximating optimal nonlinear filters,
such as approximate grid based techniques, known to perform well in
a variety of applications.
\end{abstract}
\textbf{\textit{$\quad$}}\textbf{Keywords.} Approximate Nonlinear
Filtering, Hidden Models, Partially Observable Systems, Stochastic
Processes, ${\cal C}$-Weak Convergence, Change of Probability Measures.

\section{Introduction}

Nonlinear stochastic filtering refers to problems in which a stochastic
process, usually called the state, is partially observed as a result
of measuring another stochastic process, usually called the observations
or measurements, and the objective is to estimate the state or some
functional of it, based only on past and present observations. The
nonlinearity is due to the general, possibly non Gaussian nature of
the state and observations processes, as well as the fact that, in
general, the state may be partially observed as a nonlinear functional
of the observations. Usually, nonlinear state estimators are designed
so as to optimize some performance criterion. Most commonly, this
corresponds to the Minimum Mean Squared Error (MMSE), which is also
adopted in this work. 

A desirable feature of a nonlinear filter is \textit{recursiveness}
in time, as it greatly reduces computational complexity and allows
for real time estimation as new measurements become available. However,
not all nonlinear filters possess this important property \cite{Segall1976,Elliott1994Hidden}.
Recursive nonlinear filters exist for some very special cases, such
as those in which the transition model of the state process is linear
(Gauss-Markov), or when the state is a Markov chain (discrete state
space) \cite{Segall_Point1976,Marcus1979,Elliott1994Exact,Elliott1994_HowToCount}.
In the absence of recursive filter representations, practical filtering
schemes have been developed, which typically approximate the desired
quantities of interest, either heuristically (e.g., Gaussian approximations
\cite{Kushner1967_Approximations,ItoXiong1997}) or in some more powerful,
rigorous sense (e.g., Markov chain approximations \cite{Kushner2001_BOOK,Kushner2008}).

In this paper, we follow the latter research direction. Specifically,
we consider a partially observable system in \textit{discrete time},
comprised by a hidden, almost surely compactly bounded state process,
observed through another, conditionally Gaussian measurement process.
The mean and covariance matrix of the measurements both constitute
nonlinear, time varying and state dependent functions, assumed to
be known apriori. Employing a change of measure argument and using
the original measurements, an approximate filtering operator can be
defined, by replacing the ``true'' state process by an appropriate
approximation. Our contribution is summarized in showing that if the
approximation converges to the state either in probability or in the
${\cal C}$-weak sense (Section II.C), the resulting filtering operator
converges to the true optimal nonlinear filter in a relatively strong
and well defined sense; the convergence is compact in time and uniform
in a measurable set of probability measure almost unity (Theorem \ref{CONVERGENCE_THEOREM}).
The aforementioned set is completely characterized in terms of a subset
of the parameters of the filtering problem of interest. Consequently,
our results provide a purely quantitative justification of Egoroff's
theorem \cite{Richardson2009measure} for the problem at hand, which
concerns the equivalence of almost sure convergence and almost uniform
convergence of measurable functions.

To better motivate the reader, let us describe two problems that fit
the scenario described above and can benefit from the contributions
of this paper, namely, those of \textit{sequential channel state estimation}
and \textit{(sequential) spatiotemporal channel prediction} \cite{KalPetChannelMarkov2014}
(see also \cite{KalPetGRID2014}). The above problems arise naturally
in novel signal processing applications in the emerging area of distributed,
autonomous, physical layer aware mobile networks \cite{KalPet-Jammers-2013,KalPet-Mobi-2014,NikosBeam-2}.
Such networks usually consist of cooperating mobile sensors, each
of them being capable of observing its communication channel (under
a flat fading assumption), relative to a reference point in the space.
In most practical scenarios, the dominant quantities characterizing
the wireless links, such as the path loss exponent and the shadowing
power, behave as stochastic processes themselves. For instance, such
behavior may be due to physical changes in the environment and also
the inherent randomness of the communication medium itself. Then,
the path loss exponent and the shadowing power can be collectively
considered as the hidden state (suggestively called the channel state)
of a partially observable system, where the channel gains measured
at each sensor can be considered as the corresponding observations.
In general, such observations are nonlinear functionals of the state.
Assuming additionally that the channel state is a Markov process,
the main results presented herein can essentially provide strong asymptotic
guarantees for approximate sequential nonlinear channel state estimation
and spatiotemporal channel prediction, enabling physical layer aware
motion planning and stochastic control. For more details, the reader
is referred to \cite{KalPetChannelMarkov2014}.

The idea of replacing the process of interest with some appropriate
approximation is borrowed from \cite{Kushner2008}. However, \cite{Kushner2008}
deals almost exclusively with continuous time stochastic systems and
the results presented in there do not automatically extend to the
discrete time system setting we are dealing with here. In fact, the
continuous time counterparts of the discrete time stochastic processes
considered here are considerably more general than the ones treated
in \cite{Kushner2008}. More specifically, although some relatively
general results are indeed provided for continuous time hidden processes,
\cite{Kushner2008} is primarily focused on the standard hidden diffusion
case, which constitutes a Markov process (and aiming to the development
of recursive approximate filters), whereas, in our setting, the hidden
process is initially assumed to be arbitrary (as long as it is confined
to a compact set). Also, different from our formulation (see above),
in \cite{Kushner2008}, the covariance matrix of the observation process
\textit{does not} depend on the hidden state; the state affects only
the mean of the observations. Further, the modes of stochastic convergence
considered here are different compared to \cite{Kushner2008} (in
fact, they are stronger), both regarding convergence of approximations
and convergence of approximate filters.

The results presented in this paper provide a framework for analyzing
a number of heuristic techniques for numerically approximating optimal
nonlinear filters in discrete time, such as approximate grid based
recursive approaches, known to perform well in a wide variety of applications
\cite{Pages2005optimal,PARTICLE2002tutorial,KalPetChannelMarkov2014}.
Additionally, our results do not refer exclusively to recursive nonlinear
filters. The sufficient conditions which we provide for the convergence
of approximate filtering operators are independent of the way a filter
is realized (see Section III). This is useful because, as highlighted
in \cite{Daum2005}, no one prevents one from designing an efficient
(approximate) nonlinear filter which is part recursive and part nonrecursive,
or even possibly trying to combine the best of both worlds, and there
are practical filters designed in this fashion \cite{Daum2005}.

The paper is organized as follows: In Section II, we introduce the
system model, along with some mild technical assumptions on its structure
and also present/develop some preliminary technical results and definitions,
which are important for stating and proving our results. In Section
III, we formulate our problem in detail and present our main results
(Theorem (\ref{CONVERGENCE_THEOREM})), along with a simple instructive
example. Section IV is exclusively devoted to proving the results
stated in Section III. Finally, Section V concludes the paper.

\textit{Notation}: In the following, the state vector will be represented
as $X_{t}$, its approximations as $X_{t}^{L_{S}}$, and all other
matrices and vectors, either random or not, will be denoted by boldface
letters (to be clear by the context). Real valued random variables
and abstract random elements will be denoted by uppercase letters.
Calligraphic letters and formal script letters will denote sets and
$\sigma$-algebras, respectively. The operators $\left(\cdot\right)^{\boldsymbol{T}}$,
$\lambda_{min}\left(\cdot\right)$, $\lambda_{max}\left(\cdot\right)$
will denote transposition, minimum and maximum eigenvalue, respectively.
For any random element (same for variable, vector) $Y$, $\sigma\left\{ Y\right\} $
will denote the $\sigma$-algebra generated by $Y$. The $\ell_{p}$
norm of a vector $\boldsymbol{x}\in\mathbb{R}^{n}$ is $\left\Vert \boldsymbol{x}\right\Vert _{p}\triangleq\left(\sum_{i=1}^{n}\left|x_{i}\right|^{p}\right)^{1/p}$,
for all naturals $p\ge1$. The spectral and Frobenius norms of any
matrix ${\bf X}\in\mathbb{R}^{n\times n}$ are $\left\Vert {\bf X}\right\Vert _{2}\triangleq\max_{\left\Vert \boldsymbol{x}\right\Vert _{2}\equiv1}\left\Vert {\bf X}\boldsymbol{x}\right\Vert _{2}$
and $\left\Vert {\bf X}\right\Vert _{F}\triangleq\sqrt{\sum_{i,j=1}^{n,n}\left|{\bf X}_{ij}\right|^{2}}$,
respectively. Positive definiteness and semidefiniteness of ${\bf X}$
will be denoted by ${\bf X}\succ{\bf 0}$ and ${\bf X}\succeq{\bf 0}$,
respectively. For any Euclidean space $\mathbb{R}^{N\times1}$, ${\bf I}_{N\times N}$
will denote the respective identity operator. Additionally, throughout
the paper, we employ the identifications $\mathbb{R}_{+}\equiv\left[0,\infty\right)$,
$\mathbb{R}_{++}\equiv\left(0,\infty\right)$, $\mathbb{N}^{+}\equiv\left\{ 1,2,\ldots\right\} $,
$\mathbb{N}_{n}^{+}\equiv\left\{ 1,2,\ldots,n\right\} $ and $\mathbb{N}_{n}\equiv\left\{ 0\right\} \cup\mathbb{N}_{n}^{+}$,
for any positive natural $n$.

\section{Partially Observable System Model \& Technical Preliminaries}

In this section, we give a detailed description of the partially observable
(or hidden) system model of interest and present our related technical
assumptions on its components. Additionally, we present some essential
background on the measure theoretic concept of change of probability
measures and state some definitions and known results regarding specific
modes of stochastic convergence, which will be employed in our subsequent
theoretical developments.

\subsection{Hidden Model: Definitions \& Technical Assumptions}

First, let us set the basic probabilistic framework, as well as precisely
define the hidden system model considered throughout the paper:
\begin{itemize}
\item All stochastic processes considered below are fundamentally generated
on a common complete probability space (the base space), defined by
a triplet $\left(\Omega,\mathscr{F},{\cal P}\right)$, at each time
instant taking values in a measurable state space, consisting of some
Euclidean subspace and the associated Borel $\sigma$-algebra on that
subspace. For example, for each $t\in\mathbb{N}$, the state process
$X_{t}\equiv X_{t}\left(\omega\right)$, where $\omega\in\Omega$,
takes its values in the measurable state space $\left(\mathbb{R}^{M\times1},\mathscr{B}\left(\mathbb{R}^{M\times1}\right)\right)$,
where $\mathscr{B}\left(\mathbb{R}^{M\times1}\right)$ constitutes
the Borel $\sigma$-algebra of measurable subsets of $\mathbb{R}^{M\times1}$.
\item In this work, the evolution mechanism of state process $X_{t}$ is
assumed to be arbitrary. However, in order to avoid unnecessary technical
complications, we assume that, for each $t\in\mathbb{N}$, the induced
probability measure of $X_{t}$ is absolutely continuous with respect
to the Lebesgue measure on its respective state space. Then, by the
Radon-Nikodym Theorem, it admits a density, unique up to sets of zero
Lebesgue measure. Also, we will generically assume that for all $t\in\mathbb{N}$,
$X_{t}\in{\cal Z}$ almost surely, where ${\cal Z}$ constitutes a
compact strict subset of $\mathbb{R}^{M\times1}$. In what follows,
however, in order to lighten the presentation, we will assume that
$M\equiv1$. Nevertheless, all stated results hold with the same validity
if $M>1$ (See also Assumption 2 below). 
\item The state $X_{t}$ is partially observed through the observation process
\begin{equation}
{\bf y}_{t}\triangleq\boldsymbol{\mu}_{t}\left(X_{t}\right)+\boldsymbol{\sigma}_{t}\left(X_{t}\right)+\boldsymbol{\xi}_{t}\in\mathbb{R}^{N\times1},\quad\forall t\in\mathbb{N},\label{eq:Observation_Equation}
\end{equation}
where, \textit{conditioned on $X_{t}$} and for each $t\in\mathbb{N}$,
the sequence $\left\{ \boldsymbol{\mu}_{t}:{\cal Z}\mapsto\mathbb{R}^{N\times1}\right\} _{t\in\mathbb{N}}$
is known apriori, the process $\boldsymbol{\sigma}_{t}\left(X_{t}\right)\sim{\cal N}\left({\bf 0},\boldsymbol{\Sigma}_{t}\left(X_{t}\right)\succ{\bf 0}\right)$
constitutes Gaussian noise, with the sequence $\left\{ \boldsymbol{\Sigma}_{t}:{\cal Z}\mapsto{\cal D}_{\boldsymbol{\Sigma}}\right\} _{t\in\mathbb{N}}$,
where ${\cal D}_{\boldsymbol{\Sigma}}$ is a bounded subset of $\mathbb{R}^{N\times N}$,
also known apriori, and $\boldsymbol{\xi}_{t}\overset{i.i.d.}{\sim}{\cal N}\left({\bf 0},\sigma_{\xi}^{2}{\bf I}_{N\times N}\right)$. 
\end{itemize}
As a pair, the state $X_{t}$ and the observations process described
by (\ref{eq:Observation_Equation}) define a very wide family of partially
observable systems. In particular, any Hidden Markov Model (HMM) of
\textit{any} order, in which the respective Markov state process is
almost surely confined in a compact subset of its respective Euclidean
state space, is indeed a member of this family. More specifically,
let us rewrite (\ref{eq:Observation_Equation}) in the canonical form
\begin{equation}
{\bf y}_{t}\equiv\boldsymbol{\mu}_{t}\left(X_{t}\right)+\sqrt{{\bf C}_{t}\left(X_{t}\right)}\boldsymbol{u}_{t}\in\mathbb{R}^{N\times1},\quad\forall t\in\mathbb{N},
\end{equation}
where $\boldsymbol{u}_{t}\equiv\boldsymbol{u}_{t}\left(\omega\right)$
constitutes a standard Gaussian white noise process and, for all $x\in{\cal Z}$,
${\bf C}_{t}\left(x\right)\triangleq\boldsymbol{\Sigma}_{t}\left(x\right)+\sigma_{\xi}^{2}{\bf I}_{N\times N}\in{\cal D}_{{\bf C}}$,
with ${\cal D}_{{\bf C}}$ bounded. Then, for a possibly nonstationary
HMM of order $m$, assuming the existence of an explicit functional
model for describing the temporal evolution of the state (being a
Markov process of order $m$), we get the system of \textit{standardized}
stochastic difference equations\renewcommand{\arraystretch}{1.5}
\begin{equation}
\begin{array}{l}
X_{t}\equiv f_{t}\left(\left\{ X_{t-i}\right\} _{i\in\mathbb{N}_{m}^{+}},\boldsymbol{W}_{t}\right)\in{\cal Z}\\
\,{\bf y}_{t}\hspace{0.5bp}\equiv\boldsymbol{\mu}_{t}\left(X_{t}\right)+\sqrt{{\bf C}_{t}\left(X_{t}\right)}\boldsymbol{u}_{t}
\end{array},\quad\forall t\in\mathbb{N},
\end{equation}
\renewcommand{\arraystretch}{1.5}where, for each $t$, $f_{t}:{\cal Z}^{m}\times{\cal W}\overset{a.s.}{\mapsto}{\cal Z}$
(with ${\cal Z}^{m}\triangleq\times_{m\text{ times }}{\cal Z}$) constitutes
a measurable nonlinear state transition mapping and $\boldsymbol{W}_{t}\equiv\boldsymbol{W}_{t}\left(\omega\right)\in{\cal W}\subseteq\mathbb{R}^{M_{W}\times1}$,
denotes a (discrete time) white noise process with state space ${\cal W}$.
For a first order stationary HMM, the above system of equations reduces
to
\begin{equation}
\begin{array}{l}
X_{t}\equiv f\left(X_{t-1},\boldsymbol{W}_{t}\right)\in{\cal Z}\\
\,{\bf y}_{t}\hspace{0.5bp}\equiv\boldsymbol{\mu}_{t}\left(X_{t}\right)+\sqrt{{\bf C}_{t}\left(X_{t}\right)}\boldsymbol{u}_{t}
\end{array},\quad\forall t\in\mathbb{N},
\end{equation}
which arguably constitutes the most typical partially observable system
model encountered in both Signal Processing and Control, with plethora
of important applications.

Let us also present some more specific assumptions, regarding the
nature (boundedness, continuity and expansiveness) of the aforementioned
sequences of functions.\textbf{\medskip{}
}

\noindent \textbf{Assumption 1:} \label{Elementary_Definition_1-1}\textbf{(Boundedness)}
For later reference, let
\begin{flalign}
\lambda_{inf} & \triangleq{\displaystyle \inf_{t\in\mathbb{N}}}\hspace{0.2em}{\displaystyle \inf_{x\in{\cal Z}}}\hspace{0.2em}\lambda_{min}\left({\bf C}_{t}\left(x\right)\right),\\
\lambda_{sup} & \triangleq{\displaystyle \sup_{t\in\mathbb{N}}}\hspace{0.2em}{\displaystyle \sup_{x\in{\cal Z}}}\hspace{0.2em}\lambda_{max}\left({\bf C}_{t}\left(x\right)\right),\\
\mu_{sup} & \triangleq{\displaystyle \sup_{t\in\mathbb{N}}}\hspace{0.2em}{\displaystyle \sup_{x\in{\cal Z}}}\hspace{0.2em}\left\Vert \boldsymbol{\mu}_{t}\left(x\right)\right\Vert _{2},
\end{flalign}
where each quantity of the above is uniformly and finitely bounded
for all $t\in\mathbb{N}$ and for all $x\in{\cal Z}$. If $x$ is
substituted by the stochastic process $X_{t}\left(\omega\right)$,
then all the above definitions continue to hold in the essential sense.
For technical reasons related to the bounding-from-above arguments
presented in Section IV, containing the proof of the main result of
the paper, it is also assumed that $\lambda_{inf}>1$, a requirement
which can always be satisfied by appropriate normalization of the
observations.\textbf{\medskip{}
}

\noindent \textbf{Assumption 2:} \textbf{(Continuity \& Expansiveness)}
All members of the functional family\linebreak{}
$\left\{ \boldsymbol{\mu}_{t}:{\cal Z}\mapsto\mathbb{R}^{N\times1}\right\} _{t\in\mbox{\ensuremath{\mathbb{N}}}}$
are uniformly Lipschitz continuous, that is, there exists a bounded
constant $K_{\mu}\in\mathbb{R}_{+}$, such that, for all $t\in\mathbb{N}$,
\begin{equation}
\left\Vert \boldsymbol{\mu}_{t}\left(x\right)-\boldsymbol{\mu}_{t}\left(y\right)\right\Vert _{2}\le K_{\mu}\left|x-y\right|,\quad\forall\left(x,y\right)\in{\cal Z}\times{\cal Z}.
\end{equation}
Additionally, all members of the functional family $\left\{ \boldsymbol{\Sigma}_{t}:{\cal Z}\mapsto{\cal D}_{\boldsymbol{\Sigma}}\subset\mathbb{R}^{N\times N}\right\} _{t\in\mathbb{N}}$
are \textit{elementwise} uniformly Lipschitz continuous, that is,
there exists some universal and bounded constant $K_{\boldsymbol{\Sigma}}\in\mathbb{R}_{+}$,
such that, for all $t\in\mathbb{N}$ and for all $\left(i,j\right)\in\mathbb{N}_{N}^{+}\times\mathbb{N}_{N}^{+}$,
\begin{equation}
\left|\boldsymbol{\Sigma}_{t}^{ij}\left(x\right)-\boldsymbol{\Sigma}_{t}^{ij}\left(y\right)\right|\le K_{\boldsymbol{\Sigma}}\left|x-y\right|,\quad\forall\left(x,y\right)\in{\cal Z}\times{\cal Z}.
\end{equation}
If $x$ is substituted by the stochastic process $X_{t}\left(\omega\right)$,
then all the above statements are understood in the almost sure sense.
\begin{rem}
As we have already said, for simplicity, we assume that ${\cal Z}\subset\mathbb{R}$,
that is, $M\equiv1$. In any other case (when $M>1$), we modify the
Lipschitz assumptions stated above simply by replacing $\left|x-y\right|$
with $\left\Vert {\bf x}-{\bf y}\right\Vert _{1}$, that is, Lipschitz
continuity is meant to be with respect to the $\ell_{1}$ norm in
the domain of the respective function. If this holds, everything that
follows works also in $\mathbb{R}^{M>1}$, just with some added complexity
in the proofs of the results. Also, because $\left\Vert {\bf x}\right\Vert _{2}\le\left\Vert {\bf x}\right\Vert _{1}$
for any ${\bf x}\in\mathbb{R}^{M}$, the assumed Lipschitz continuity
with respect to $\ell_{1}$ norm can be replaced by Lipschitz continuity
with respect to the $\ell_{2}$ norm, since the latter implies the
former, and again everything holds. Further, if $M>1$, convergence
in probability and ${\cal L}_{1}$ convergence of random vectors are
both defined by replacing absolute values with the $\ell_{1}$ norms
of the vectors under consideration.\hfill{}\ensuremath{\blacksquare}
\end{rem}

\subsection{Conditional Expectations, Change of Measure \& Filters}

Before proceeding with the general formulation of our estimation problem
and for later reference, let us define the complete natural filtrations
of the processes $X_{t}$ and ${\bf y}_{t}$ as
\begin{flalign}
\left\{ \mathscr{X}_{t}\right\} _{t\in\mathbb{N}} & \triangleq\left\{ \sigma\left\{ \left\{ X_{i}\right\} _{i\in\mathbb{N}_{t}}\right\} \right\} _{t\in\mathbb{N}}\quad\text{and}\\
\left\{ \mathscr{Y}_{t}\right\} _{t\in\mathbb{N}} & \triangleq\left\{ \sigma\left\{ \left\{ {\bf y}_{i}\right\} _{i\in\mathbb{N}_{t}}\right\} \right\} _{t\in\mathbb{N}},
\end{flalign}
respectively, and also the complete filtration generated by both $X_{t}$
and ${\bf y}_{t}$ as
\begin{equation}
\left\{ \mathscr{H}_{t}\right\} _{t\in\mathbb{N}}\triangleq\left\{ \sigma\left\{ \left\{ X_{i},{\bf y}_{i}\right\} _{i\in\mathbb{N}_{t}}\right\} \right\} _{t\in\mathbb{N}}.
\end{equation}
In all the above, $\sigma\left\{ Y\right\} $ denotes the $\sigma$-algebra
generated by the random variable $Y$.

In this work, we adopt the MMSE as an optimality criterion. In this
case, one would ideally like to discover a solution to the stochastic
optimization problem\renewcommand{\arraystretch}{1.5}
\begin{equation}
\begin{array}{rl}
\inf_{\widehat{X}_{t}} & \mathbb{E}\left\{ \left\Vert X_{t}-\widehat{X}_{t}\right\Vert _{2}^{2}\right\} \\
\mathrm{subject\,to} & \mathbb{E}\left\{ \left.\widehat{X}_{t}\right|\mathscr{Y}_{t}\right\} \equiv\widehat{X}_{t}
\end{array},\quad\forall t\in\mathbb{N},\label{eq:MMSE_Optimization}
\end{equation}
\renewcommand{\arraystretch}{1}where the constraint is equivalent
to confining the search for possible estimators $\widehat{X}_{t}$
to the subset of interest, that is, containing the ones which constitute
$\mathscr{Y}_{t}$-measurable random variables. Of course, the solution
to the program (\ref{eq:MMSE_Optimization}) coincides with the conditional
expectation \cite{Papoulis2002}
\begin{equation}
\mathbb{E}\left\{ \left.X_{t}\right|\mathscr{Y}_{t}\right\} \equiv\widehat{X}_{t},\quad\forall t\in\mathbb{N},\label{eq:Cond_Expectation}
\end{equation}
which, in the nonlinear filtering literature, is frequently called
a \textit{filter}. There is also an alternative and very useful way
of reexpressing the filter process $\widehat{X}_{t}$, using the concept
of \textit{change of probability measures}, which will allow us to
stochastically decouple the state and observations of our hidden system
and then let us formulate precisely the approximation problem of interest
in this paper. Change of measure techniques have been extensively
used in discrete time nonlinear filtering, mainly in order to discover
recursive representations for various hidden Markov models \cite{Elliott1994Hidden,Elliott1994Exact,Elliott1994_HowToCount,ElliotKrishnamurthy1999}.
In the following, we provide a brief introduction to these type of
techniques (suited to our purposes) which is also intuitive, simple
and technically accessible, including direct proofs of the required
results.\textbf{\textit{\medskip{}
}}

\noindent \textbf{\textit{Change of Probability Measure in Discrete
Time: Demystification \& Useful Results\medskip{}
}}

So far, all stochastic processes we have considered are defined on
the base space $\left(\Omega,\mathscr{F},{\cal P}\right)$. In fact,
it is the structure of the probability measure ${\cal P}$ that is
responsible for the coupling between the stochastic processes $X_{t}$
and ${\bf y}_{t}$, being, for each $t\in\mathbb{N}$, measurable
functions from $\left(\Omega,\mathscr{F}\right)$ to $\left(\mathbb{R},\mathscr{B}\left(\mathbb{R}\right)\right)$
and $\left(\mathbb{R}^{N},\mathscr{B}\left(\mathbb{R}^{N}\right)\right)$,
respectively. Intuitively, the measure ${\cal P}$ constitutes our
``reference measurement tool'' for measuring the events contained
in the base $\sigma$-algebra $\mathscr{F}$, and any random variable
serves as a ``medium'' or ``channel'' for observing these events.

As a result, some very natural questions arise from the above discussion.
First, one could ask if and under what conditions it is possible to
change the probability measure ${\cal P}$, which constitutes our
\textit{fixed} way of assigning probabilities to events, to another
measure $\widetilde{{\cal P}}$ on the same measurable space $\left(\Omega,\mathscr{F}\right)$,
in a way such that there exists some sort of transformation connecting
${\cal P}$ and $\widetilde{{\cal P}}$. Second, if we can indeed
make the transition from ${\cal P}$ to $\widetilde{{\cal P}}$, could
we choose the latter probability measure in a way such that the processes
$X_{t}$ and ${\bf y}_{t}$ behave according to a prespecified statistical
model? For instance, we could demand that, under $\widetilde{{\cal P}}$,
$X_{t}$ and ${\bf y}_{t}$ constitute independent stochastic processes.
Third and most important, is it possible to derive an expression for
the ``original'' filter $\widehat{X}_{t}\equiv\mathbb{E}_{{\cal P}}\left\{ \left.X_{t}\right|\mathscr{Y}_{t}\right\} $
under measure ${\cal P}$, using only (conditional) expectations under
$\widetilde{{\cal P}}$ (denoted as $\mathbb{E}_{\widetilde{{\cal P}}}\left\{ \left.\cdot\right|\cdot\right\} $)?

The answers to all three questions stated above are affirmative under
very mild assumptions and the key result in order to prove this assertion
is the Radon-Nikodym Theorem \cite{Ash2000Probability}. However,
assuming that the induced joint probability measure of the processes
of interest is absolutely continuous with respect to the Lebesgue
measure of the appropriate dimension, in the following we provide
an answer to these questions, employing only elementary probability
theory, avoiding the direct use of the Radon-Nikodym Theorem.
\begin{thm}
\textbf{\textup{\label{thm:(Conditional-Bayes'-Theorem)}(Conditional
Bayes' Theorem for Densities)}} Consider the (possibly vector) stochastic
processes $X_{t}\left(\omega\right)\in\mathbb{R}^{N_{t}\times1}$
and $Y_{t}\left(\omega\right)\in\mathbb{R}^{M_{t}\times1}$, both
defined on the same measurable space $\left(\Omega,\mathscr{F}\right)$,
for all $t\in\mathbb{N}$. Further, if ${\cal P}$ and $\widetilde{{\cal P}}$
are two probability measures on $\left(\Omega,\mathscr{F}\right)$,
suppose that:
\begin{itemize}
\item Under both ${\cal P}$ and $\widetilde{{\cal P}}$, the process $X_{t}$
is integrable.
\item Under the base probability measure ${\cal P}$ (resp. $\widetilde{{\cal P}}$),
the induced joint probability measure of\linebreak{}
$\left(\left\{ X_{i}\right\} _{i\in\mathbb{N}_{t}},\left\{ Y_{i}\right\} _{i\in\mathbb{N}_{t}}\right)$
is absolutely continuous with respect to the Lebesgue measure of the
appropriate dimension, implying the existence of a density $f_{t}$
(resp. $\widetilde{f}_{t}$), with
\begin{equation}
f_{t}:\left(\underset{i\in\mathbb{N}_{t}}{\times}\mathbb{R}^{N_{i}\times1}\right)\times\left(\underset{i\in\mathbb{N}_{t}}{\times}\mathbb{R}^{M_{i}\times1}\right)\mapsto\mathbb{R}_{+}.
\end{equation}

\item For each set of points, it is true that
\begin{equation}
\widetilde{f}_{t}\left(\cdots\right)\equiv0\quad\Rightarrow\quad f_{t}\left(\cdots\right)\equiv0,\label{eq:ABSOLUTE_Densities}
\end{equation}
or, equivalently, the support of $f_{t}$ is contained in the support
of $\widetilde{f}_{t}$.
\end{itemize}
Also, for all $t\in\mathbb{N}$, define the Likelihood Ratio (LR)
at $t$ as the $\left\{ \mathscr{H}_{t}\right\} $-adapted, nonnegative
stochastic process\footnote{With zero probability of confusion, we use $\left\{ \mathscr{Y}_{t}\right\} _{t\in\mathbb{N}}$
and $\left\{ \mathscr{H}_{t}\right\} _{t\in\mathbb{N}}$ to denote
the complete filtrations generated by $Y_{t}$ and $\left\{ X_{t},Y_{t}\right\} $.\textit{ }}
\begin{equation}
\Lambda_{t}\triangleq\dfrac{f_{t}\left(X_{0},X_{1},\ldots,X_{t},Y_{0},Y_{1},\ldots,Y_{t}\right)}{\widetilde{f}_{t}\left(X_{0},X_{1},\ldots,X_{t},Y_{0},Y_{1},\ldots,Y_{t}\right)}.\label{eq:Likelihood_Ratio}
\end{equation}
Then, it is true that
\begin{equation}
\widehat{X}_{t}\equiv\mathbb{E}_{{\cal P}}\left\{ \left.X_{t}\right|\mathscr{Y}_{t}\right\} \equiv\dfrac{\mathbb{E}_{\widetilde{{\cal P}}}\left\{ \left.X_{t}\Lambda_{t}\right|\mathscr{Y}_{t}\right\} }{\mathbb{E}_{\widetilde{{\cal P}}}\left\{ \left.\Lambda_{t}\right|\mathscr{Y}_{t}\right\} },\label{eq:Changed_Expectation}
\end{equation}
almost everywhere with respect to ${\cal P}$.\end{thm}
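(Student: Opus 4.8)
The plan is to reduce the conditional Bayes identity \eqref{eq:Changed_Expectation} to the elementary conditional-density version of Bayes' rule, and then translate that back into the language of conditional expectations under the two measures. First I would fix $t\in\mathbb{N}$ and work with the explicit joint densities: under ${\cal P}$ the vector $\left(\left\{ X_{i}\right\} _{i\in\mathbb{N}_{t}},\left\{ Y_{i}\right\} _{i\in\mathbb{N}_{t}}\right)$ has density $f_{t}$, and under $\widetilde{{\cal P}}$ it has density $\widetilde{f}_{t}$, both with respect to Lebesgue measure of the appropriate dimension. The absolute-continuity hypothesis \eqref{eq:ABSOLUTE_Densities} guarantees that the ratio $\Lambda_{t}=f_{t}/\widetilde{f}_{t}$ is well defined $\widetilde{{\cal P}}$-almost everywhere (on the support of $\widetilde{f}_{t}$, which carries all the $\widetilde{{\cal P}}$-mass, and hence also all the ${\cal P}$-mass), and it is manifestly $\mathscr{H}_{t}$-measurable as a Borel function of $\left(\left\{ X_{i}\right\} _{i\in\mathbb{N}_{t}},\left\{ Y_{i}\right\} _{i\in\mathbb{N}_{t}}\right)$.

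Next I would compute both sides explicitly. For the left side, $\mathbb{E}_{{\cal P}}\left\{ \left.X_{t}\right|\mathscr{Y}_{t}\right\}$ is, by the disintegration of $f_{t}$, the ratio $\int x_{t}\, f_{t}(x_{0:t},Y_{0:t})\, dx_{0:t} \big/ \int f_{t}(x_{0:t},Y_{0:t})\, dx_{0:t}$, evaluated at the observed $Y_{0:t}$; this is the standard formula for the regression function. For the right side, I would apply the tower/substitution properties of conditional expectation under $\widetilde{{\cal P}}$: since $\Lambda_{t}$ and $X_{t}\Lambda_{t}$ are functions of $(X_{0:t},Y_{0:t})$, and since $\mathscr{Y}_{t}=\sigma\{Y_{0:t}\}$, the conditional expectations $\mathbb{E}_{\widetilde{{\cal P}}}\{X_{t}\Lambda_{t}\mid \mathscr{Y}_{t}\}$ and $\mathbb{E}_{\widetilde{{\cal P}}}\{\Lambda_{t}\mid \mathscr{Y}_{t}\}$ are obtained by integrating against the $\widetilde{{\cal P}}$-conditional density of $X_{0:t}$ given $Y_{0:t}$, namely $\widetilde{f}_{t}(x_{0:t},Y_{0:t})\big/\int \widetilde{f}_{t}(x_{0:t},Y_{0:t})\,dx_{0:t}$. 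Substituting $\Lambda_{t}=f_{t}/\widetilde{f}_{t}$ into the numerator, the factors $\widetilde{f}_{t}$ cancel pointwise (using \eqref{eq:ABSOLUTE_Densities} to handle the zero set), leaving $\int x_{t}\, f_{t}(x_{0:t},Y_{0:t})\, dx_{0:t}$ over $\int f_{t}(x_{0:t},Y_{0:t})\, dx_{0:t}$ — exactly the left-hand side. The integrability of $X_{t}$ under both measures is what makes all of these integrals finite and the manipulations legitimate, and it also guarantees the denominator $\mathbb{E}_{\widetilde{{\cal P}}}\{\Lambda_{t}\mid\mathscr{Y}_{t}\}$ is positive (equal to the ratio of marginal densities of $Y_{0:t}$) ${\cal P}$-a.e.

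An equivalent and perhaps cleaner route, which I would probably present instead to avoid carrying bulky multivariate integrals, is the abstract one: show first that for any bounded $\mathscr{Y}_{t}$-measurable $Z$ one has $\mathbb{E}_{{\cal P}}\{X_{t}Z\}=\mathbb{E}_{\widetilde{{\cal P}}}\{X_{t}\Lambda_{t}Z\}$ and $\mathbb{E}_{{\cal P}}\{Z\}=\mathbb{E}_{\widetilde{{\cal P}}}\{\Lambda_{t}Z\}$, which follows directly from the definition of $\Lambda_{t}$ as a density ratio (change-of-variables on the base space, using that the push-forward of $\widetilde{{\cal P}}$-integration against $f_{t}/\widetilde{f}_{t}$ recovers ${\cal P}$-integration against $f_{t}$). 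From the second identity, $\mathbb{E}_{{\cal P}}\{Z\}=\mathbb{E}_{\widetilde{{\cal P}}}\{\Lambda_{t}Z\}=\mathbb{E}_{\widetilde{{\cal P}}}\{\mathbb{E}_{\widetilde{{\cal P}}}\{\Lambda_{t}\mid\mathscr{Y}_{t}\}\,Z\}$, and similarly from the first, $\mathbb{E}_{{\cal P}}\{X_{t}Z\}=\mathbb{E}_{\widetilde{{\cal P}}}\{\mathbb{E}_{\widetilde{{\cal P}}}\{X_{t}\Lambda_{t}\mid\mathscr{Y}_{t}\}\,Z\}$. Using the first displayed pair with $Z$ replaced by $Z/\mathbb{E}_{\widetilde{{\cal P}}}\{\Lambda_{t}\mid\mathscr{Y}_{t}\}$ (which is $\mathscr{Y}_{t}$-measurable and, after a truncation/monotone-class argument, admissible), one gets $\mathbb{E}_{{\cal P}}\{X_{t}Z\}=\mathbb{E}_{{\cal P}}\big\{ \big(\mathbb{E}_{\widetilde{{\cal P}}}\{X_{t}\Lambda_{t}\mid\mathscr{Y}_{t}\}\big/\mathbb{E}_{\widetilde{{\cal P}}}\{\Lambda_{t}\mid\mathscr{Y}_{t}\}\big)Z\big\}$ for all such $Z$, and since the bracketed quantity is $\mathscr{Y}_{t}$-measurable, the defining property of conditional expectation yields \eqref{eq:Changed_Expectation} ${\cal P}$-a.e. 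The main obstacle, and the only place requiring genuine care rather than bookkeeping, is the justification that one may divide by $\mathbb{E}_{\widetilde{{\cal P}}}\{\Lambda_{t}\mid\mathscr{Y}_{t}\}$: I would need to argue this conditional expectation is strictly positive ${\cal P}$-a.e. (it equals, ${\cal P}$-a.e., the Radon–Nikodym density on $\mathscr{Y}_{t}$ of ${\cal P}$ with respect to $\widetilde{{\cal P}}$, hence vanishes only on a ${\cal P}$-null set) and to handle integrability of $X_{t}\Lambda_{t}Z/\mathbb{E}_{\widetilde{{\cal P}}}\{\Lambda_{t}\mid\mathscr{Y}_{t}\}$ by first proving the identity for $X_{t}$ bounded and $Z$ bounded, then passing to the general integrable $X_{t}$ by truncation and dominated/monotone convergence.
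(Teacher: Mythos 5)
Your first route is, essentially line for line, the paper's own proof: write $\mathbb{E}_{{\cal P}}\{X_{t}\mid\mathscr{Y}_{t}\}$ as the ratio $\int x_{t}f_{t}\,dx_{0:t}\big/\int f_{t}\,dx_{0:t}$ evaluated at the observed trajectory, substitute $f_{t}=\lambda_{t}\widetilde{f}_{t}$ pointwise (with the support condition \eqref{eq:ABSOLUTE_Densities} disposing of the indeterminate points, which lie in a null set), divide through by the marginal $\widetilde{f}_{{\cal Y}_{t}}$ to expose the $\widetilde{{\cal P}}$-conditional density of $X_{0:t}$ given $Y_{0:t}$, and read off the two integrals as $\mathbb{E}_{\widetilde{{\cal P}}}\{X_{t}\Lambda_{t}\mid\mathscr{Y}_{t}\}$ and $\mathbb{E}_{\widetilde{{\cal P}}}\{\Lambda_{t}\mid\mathscr{Y}_{t}\}$. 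That part is correct and complete, so the proposal succeeds. The second, abstract route you say you would prefer (establishing $\mathbb{E}_{{\cal P}}\{X_{t}Z\}=\mathbb{E}_{\widetilde{{\cal P}}}\{X_{t}\Lambda_{t}Z\}$ for bounded $\mathscr{Y}_{t}$-measurable $Z$ and invoking the defining property of conditional expectation) is the standard change-of-measure argument from the filtering literature; it is more general, since it never needs Lebesgue densities, only ${\cal P}\ll\widetilde{{\cal P}}$ on $\mathscr{H}_{t}$, but it buys that generality at the cost of the positivity and truncation arguments you flag, which the density route sidesteps because the denominator is visibly the marginal density $f_{{\cal Y}_{t}}({\cal Y}_{t})$, positive ${\cal P}$-a.e.\ by construction. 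The paper deliberately chooses the elementary density computation precisely to avoid invoking the Radon--Nikodym machinery, so if you present the abstract version you should carry out the truncation/monotone-class step you sketch rather than leave it as a remark.
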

\begin{proof}[Proof of Theorem \ref{thm:(Conditional-Bayes'-Theorem)}]
 See the Appendix.\end{proof}
\begin{rem}
The $\left\{ \mathscr{H}_{t}\right\} $-adapted LR process 
\begin{equation}
\Lambda_{t}\equiv\Lambda_{t}\left({\cal X}_{t}\triangleq\left\{ X_{i}\right\} _{i\in\mathbb{N}_{t}},{\cal Y}_{t}\triangleq\left\{ Y_{i}\right\} _{i\in\mathbb{N}_{t}}\right),\quad t\in\mathbb{N},\label{eq:RDLR}
\end{equation}
as defined in (\ref{eq:Likelihood_Ratio}), actually coincides with
the restriction of the Radon-Nikodym derivative of ${\cal P}$ with
respect to $\widetilde{{\cal P}}$ to the filtration $\left\{ \mathscr{H}_{t}\right\} _{t\in\mathbb{N}}$,
that is,
\begin{equation}
\left.\dfrac{\mathrm{d}{\cal P}\left(\omega\right)}{\mathrm{d}\widetilde{{\cal P}}\left(\omega\right)}\right|_{\mathscr{H}_{t}}\equiv\Lambda_{t}\left({\cal X}_{t}\left(\omega\right),{\cal Y}_{t}\left(\omega\right)\right),\quad\forall t\in\mathbb{N},
\end{equation}
a statement which, denoting the collections $\left\{ x_{i}\right\} _{i\in\mathbb{N}_{t}}$
and $\left\{ y_{i}\right\} _{i\in\mathbb{N}_{t}}$ as $\mathpzc{x}_{t}$
and $\mathpzc{y}_{t}$, respectively, is rigorously equivalent to
\begin{flalign}
{\cal P}\left({\cal F}\right) & \equiv\int_{{\cal F}}\Lambda_{t}\left({\cal X}_{t}\left(\omega\right),{\cal Y}_{t}\left(\omega\right)\right)\mathrm{d}\widetilde{{\cal P}}\left(\omega\right)\nonumber \\
 & \equiv\mathop{\mathlarger{\mathlarger{\int_{{\cal B}}}}}\Lambda_{t}\left(\mathpzc{x}_{t},\mathpzc{y}_{t}\right)\mathrm{d}^{2t}\widetilde{{\cal P}}_{\left({\cal X}_{t},{\cal Y}_{t}\right)}\left(\mathpzc{x}_{t},\mathpzc{y}_{t}\right)\nonumber \\
 & \equiv{\cal P}_{\left({\cal X}_{t},{\cal Y}_{t}\right)}\left({\cal B}\right)\equiv{\cal P}\left(\left({\cal X}_{t},{\cal Y}_{t}\right)\in{\cal B}\right),\\
\\
\forall{\cal F} & \triangleq\left\{ \omega\in\Omega\left|\left({\cal X}_{t}\left(\omega\right),{\cal Y}_{t}\left(\omega\right)\right)\in{\cal B}\right.\right\} \in\mathscr{H}_{t}\quad\text{and}\\
\forall{\cal B} & \in\left(\underset{i\in\mathbb{N}_{t}}{\otimes}\mathscr{B}\left(\mathbb{R}^{N_{i}\times1}\right)\right)\otimes\left(\underset{i\in\mathbb{N}_{t}}{\otimes}\mathscr{B}\left(\mathbb{R}^{M_{i}\times1}\right)\right),\quad\forall t\in\mathbb{N},
\end{flalign}
respectively (in the above, ``$\otimes$'' denotes the product operator
for $\sigma$-algebras). Of course, the existence and almost everywhere
uniqueness of $\Lambda_{t}$ are guaranteed by the Radon-Nikodym Theorem,
provided that the base measure ${\cal P}$ is absolutely continuous
with respect to $\widetilde{{\cal P}}$ on $\mathscr{H}_{t}$ (${\cal P}\ll_{\mathscr{H}_{t}}\widetilde{{\cal P}}$).
Further, for the case where there exist densities characterizing ${\cal P}$
and $\widetilde{{\cal P}}$ (as in Theorem 1), demanding that ${\cal P}\ll_{\mathscr{H}_{t}}\widetilde{{\cal P}}$
is precisely equivalent to demanding that (\ref{eq:ABSOLUTE_Densities})
is true and, again through the Radon-Nikodym Theorem, it can be easily
shown that the derivative $\Lambda_{t}$ actually coincides with the
likelihood ratio process defined in (\ref{eq:Likelihood_Ratio}),
almost everywhere.\hfill{}\ensuremath{\blacksquare}
\end{rem}
Now, let us apply Theorem 1 for the stochastic processes $X_{t}$
and ${\bf y}_{t}$, comprising our partially observed system, as defined
in Section II.A. In this respect, we present the following result.
\begin{thm}
\label{thm:COMCOM}\textbf{\textup{(Change of Measure for the Hidden
System under Study)}} Consider the hidden stochastic system of Section
II.A on the usual base space $\left(\Omega,\mathscr{F},{\cal P}\right)$,
where $X_{t}\in{\cal Z}$ and ${\bf y}_{t}\in\mathbb{R}^{N\times1}$,
almost surely $\forall t\in\mathbb{N}$, constitute the hidden state
process and the observation process, respectively. Then, there exists
an alternative, \textbf{equivalent} to\textup{ ${\cal P}$,} base
measure \textup{$\widetilde{{\cal P}}$ on }$\left(\Omega,\mathscr{F}\right)$\textup{,
}under which:
\begin{itemize}
\item The processes $X_{t}$ and ${\bf y}_{t}$ are statistically independent.
\item $X_{t}$ constitutes a stochastic process with exactly the same dynamics
as under ${\cal P}$.
\item ${\bf y}_{t}$ constitutes a Gaussian vector white noise process with
zero mean and covariance matrix equal to the identity.
\end{itemize}
Additionally, the filter $\widehat{X}_{t}$ can be expressed as in
(\ref{eq:Changed_Expectation}), where the $\left\{ \mathscr{H}_{t}\right\} $-adapted
stochastic process $\Lambda_{t},t\in\mathbb{N}$ is defined as in
(\ref{eq:LR_SPECIFIC}) (top of next page).
\end{thm}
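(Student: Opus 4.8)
The plan is to construct the equivalent measure $\widetilde{{\cal P}}$ explicitly, by prescribing the joint finite-dimensional densities of the pair $\left(\left\{ X_{i}\right\} _{i\in\mathbb{N}_{t}},\left\{ {\bf y}_{i}\right\} _{i\in\mathbb{N}_{t}}\right)$ under it, and then to invoke Theorem \ref{thm:(Conditional-Bayes'-Theorem)} directly. First I would unpack the joint density under ${\cal P}$: writing $\mathpzc{x}_{t}\triangleq\left\{ x_{i}\right\} _{i\in\mathbb{N}_{t}}$ and $\mathpzc{y}_{t}\triangleq\left\{ y_{i}\right\} _{i\in\mathbb{N}_{t}}$, and using that in canonical form ${\bf y}_{t}\equiv\boldsymbol{\mu}_{t}\left(X_{t}\right)+\sqrt{{\bf C}_{t}\left(X_{t}\right)}\boldsymbol{u}_{t}$ with $\left\{ \boldsymbol{u}_{t}\right\} $ a standard Gaussian white noise independent of the state, the conditional law of the observation path given the full state path factorizes over time, so that
\begin{equation}
f_{t}\left(\mathpzc{x}_{t},\mathpzc{y}_{t}\right)\equiv p_{t}\left(\mathpzc{x}_{t}\right)\prod_{i\in\mathbb{N}_{t}}\phi\left(y_{i};\boldsymbol{\mu}_{i}\left(x_{i}\right),{\bf C}_{i}\left(x_{i}\right)\right),
\end{equation}
where $p_{t}$ is the (assumed to exist) density of the state path $\left\{ X_{i}\right\} _{i\in\mathbb{N}_{t}}$ and $\phi\left(\cdot\,;{\bf m},{\bf C}\right)$ is the density of ${\cal N}\left({\bf m},{\bf C}\right)$ on $\mathbb{R}^{N\times1}$. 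I would then \emph{define} $\widetilde{{\cal P}}$ through the family
\begin{equation}
\widetilde{f}_{t}\left(\mathpzc{x}_{t},\mathpzc{y}_{t}\right)\triangleq p_{t}\left(\mathpzc{x}_{t}\right)\prod_{i\in\mathbb{N}_{t}}\phi\left(y_{i};{\bf 0},{\bf I}_{N\times N}\right),
\end{equation}
which leaves the state path marginals untouched, renders the observations independent of the state, and turns them into a standard Gaussian white noise; after checking that $\left\{ \widetilde{f}_{t}\right\} _{t\in\mathbb{N}}$ is Kolmogorov-consistent (immediate, since the appended Gaussian factors are themselves consistent), a probability measure $\widetilde{{\cal P}}$ carrying these densities exists on $\bigvee_{t\in\mathbb{N}}\mathscr{H}_{t}$, and the three bulleted claims can be read off directly from the product structure of $\widetilde{f}_{t}$.

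Next I would establish mutual absolute continuity ${\cal P}\sim\widetilde{{\cal P}}$ together with the remaining hypotheses of Theorem \ref{thm:(Conditional-Bayes'-Theorem)}. By Assumption 1, ${\bf C}_{t}\left(x\right)\succeq\lambda_{inf}{\bf I}_{N\times N}\succ{\bf 0}$ uniformly in $t\in\mathbb{N}$ and $x\in{\cal Z}$; hence, on the common support of $p_{t}$, both $f_{t}$ and $\widetilde{f}_{t}$ are everywhere strictly positive (a Gaussian density with nonsingular covariance never vanishes), so $\widetilde{f}_{t}\equiv0\Leftrightarrow f_{t}\equiv0$, which gives both (\ref{eq:ABSOLUTE_Densities}) \emph{and} its reverse, i.e. ${\cal P}$ and $\widetilde{{\cal P}}$ are equivalent. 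Integrability of $X_{t}$ under both measures is trivial, since $X_{t}\in{\cal Z}$ almost surely and ${\cal Z}$ is compact. With all hypotheses of Theorem \ref{thm:(Conditional-Bayes'-Theorem)} now in force, (\ref{eq:Changed_Expectation}) holds with $\Lambda_{t}\equiv f_{t}/\widetilde{f}_{t}$, and it only remains to evaluate this ratio; cancelling the $\left(2\pi\right)^{-N/2}$ normalizers and the state-path densities yields
\begin{equation}
\Lambda_{t}\equiv\prod_{i\in\mathbb{N}_{t}}\left|{\bf C}_{i}\left(X_{i}\right)\right|^{-1/2}\exp\left(\dfrac{1}{2}{\bf y}_{i}^{\boldsymbol{T}}{\bf y}_{i}-\dfrac{1}{2}\left({\bf y}_{i}-\boldsymbol{\mu}_{i}\left(X_{i}\right)\right)^{\boldsymbol{T}}{\bf C}_{i}\left(X_{i}\right)^{-1}\left({\bf y}_{i}-\boldsymbol{\mu}_{i}\left(X_{i}\right)\right)\right),
\end{equation}
which is exactly (\ref{eq:LR_SPECIFIC}).

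The step I expect to be the main obstacle is the rigorous construction of $\widetilde{{\cal P}}$ as a genuine probability measure rather than a mere prescription of finite-dimensional densities: one must argue that $\left\{ \widetilde{f}_{t}\right\} _{t\in\mathbb{N}}$ is Kolmogorov-consistent and that $\widetilde{{\cal P}}$ can be realized on (the relevant sub-$\sigma$-algebra of) the base space $\left(\Omega,\mathscr{F}\right)$. An equivalent --- and arguably cleaner --- route is to \emph{define} $\widetilde{{\cal P}}$ directly via $\left.\mathrm{d}\widetilde{{\cal P}}/\mathrm{d}{\cal P}\right|_{\mathscr{H}_{t}}\equiv\Lambda_{t}^{-1}$ and to verify that $\Lambda_{t}^{-1}$ is a strictly positive, mean-one, $\left\{ \mathscr{H}_{t}\right\} $-adapted ${\cal P}$-martingale, the identity $\mathbb{E}_{{\cal P}}\left\{ \left.\Lambda_{t}^{-1}\right|\mathscr{H}_{s}\right\} \equiv\Lambda_{s}^{-1}$ for $s\le t$ being precisely the consistency statement in disguise and following from the conditional-Gaussian structure of the observations. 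Everything downstream of this --- the independence/dynamics/white-noise claims and the closed form of $\Lambda_{t}$ --- is routine Gaussian-density bookkeeping.
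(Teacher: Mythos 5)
Your proposal is correct and follows essentially the same route as the paper: define $\widetilde{{\cal P}}$ through the finite-dimensional densities $\widetilde{f}_{t}=f_{{\cal X}_{t}}\cdot\prod_{i}\phi\left(y_{i};{\bf 0},{\bf I}\right)$ (state law unchanged, observations i.i.d.\ standard Gaussian and independent of the state), invoke Kolmogorov consistency to realize it on $\mathscr{H}_{\infty}$, note that strict positivity of the Gaussian factors makes the two densities mutually absolutely continuous, and then read off $\Lambda_{t}$ from Theorem \ref{thm:(Conditional-Bayes'-Theorem)}. Your closing remark about the alternative martingale construction via $\left.\mathrm{d}\widetilde{{\cal P}}/\mathrm{d}{\cal P}\right|_{\mathscr{H}_{t}}\equiv\Lambda_{t}^{-1}$ is a valid variant, but the main argument matches the paper's proof step for step.
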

\noindent \begin{figure*}[t!]
\hrulefill
\normalsize

\noindent \textbf{\textit{\vspace{-0.8cm}
}}

\begin{flalign}
\Lambda_{t} & \triangleq{\displaystyle \prod_{i\in\mathbb{N}_{t}}}\dfrac{\exp\left(\dfrac{1}{2}\left\Vert {\bf y}_{i}\right\Vert _{2}^{2}-\dfrac{1}{2}\left({\bf y}_{i}-\boldsymbol{\mu}_{i}\left(X_{i}\right)\right)^{\boldsymbol{T}}\left(\boldsymbol{\Sigma}_{i}\left(X_{i}\right)+\sigma_{\xi}^{2}{\bf I}_{N\times N}\right)^{-1}\left({\bf y}_{i}-\boldsymbol{\mu}_{i}\left(X_{i}\right)\right)\right)}{\sqrt{\det\left(\boldsymbol{\Sigma}_{i}\left(X_{i}\right)+\sigma_{\xi}^{2}{\bf I}_{N\times N}\right)}}\triangleq{\displaystyle \prod_{i\in\mathbb{N}_{t}}}\lambda_{i}\nonumber \\
 & \equiv\dfrac{\exp\left(\dfrac{1}{2}{\displaystyle \sum_{i\in\mathbb{N}_{t}}}\left\Vert {\bf y}_{i}\right\Vert _{2}^{2}-\left({\bf y}_{i}-\boldsymbol{\mu}_{i}\left(X_{i}\right)\right)^{\boldsymbol{T}}\left(\boldsymbol{\Sigma}_{i}\left(X_{i}\right)+\sigma_{\xi}^{2}{\bf I}_{N\times N}\right)^{-1}\left({\bf y}_{i}-\boldsymbol{\mu}_{i}\left(X_{i}\right)\right)\right)}{{\displaystyle \prod_{i\in\mathbb{N}_{t}}}\sqrt{\det\left(\boldsymbol{\Sigma}_{i}\left(X_{i}\right)+\sigma_{\xi}^{2}{\bf I}_{N\times N}\right)}}\in\mathbb{R}_{++}\label{eq:LR_SPECIFIC}
\end{flalign}
\hrulefill
\vspace*{-30pt}
\end{figure*}
\begin{proof}[Proof of Theorem 2]
Additionally to the similar identifications made above (see (\ref{eq:RDLR}))
and for later reference, let
\begin{equation}
\boldsymbol{{\cal Y}}_{t}\triangleq\left\{ {\bf y}_{i}\right\} _{i\in\mathbb{N}_{t}}\quad\text{and}\quad\mathpzc{y}_{t}\triangleq\left\{ \boldsymbol{y}_{i}\right\} _{i\in\mathbb{N}_{t}}.\label{eq:X_Cal_2}
\end{equation}
First, we construct the probability measure $\widetilde{{\cal P}}$,
this way showing its existence. To accomplish this, define, for each
$t\in\mathbb{N}$, a probability measure $\widetilde{{\cal P}}_{{\cal R}_{t}}$
on the measurable space $\left({\cal R}_{t},\mathscr{B}\left({\cal R}_{t}\right)\right)$,
where
\begin{equation}
{\cal R}_{t}\triangleq\left(\underset{i\in\mathbb{N}_{t}}{\times}\mathbb{R}\right)\times\left(\underset{i\in\mathbb{N}_{t}}{\times}\mathbb{R}^{N\times1}\right),
\end{equation}
being absolutely continuous with respect to the Lebesgue measure on
$\left({\cal R}_{t},\mathscr{B}\left({\cal R}_{t}\right)\right)$
and with density $\widetilde{f}_{t}:{\cal R}\mapsto\mathbb{R}_{+}$.
Since, for each $t\in\mathbb{N}$, the processes $X_{t}\left(\omega\right)$
and ${\bf y}_{t}\left(\omega\right)$ are both, by definition, \textit{fixed}
and \textit{measurable} functions from $\left(\Omega,\mathscr{H}_{t}\right)$
to $\left({\cal R}_{t},\mathscr{B}\left({\cal R}_{t}\right)\right)$,
with\footnote{$\mathscr{H}_{\infty}$ constitutes the \textit{join}, that is, the
\textit{smallest} $\sigma$-algebra generated by the union of all
$\mathscr{H}_{t},\forall t\in\mathbb{N}$.} 
\begin{equation}
\mathscr{H}_{t}\subseteq\mathscr{H}_{\infty}\triangleq\sigma\left\{ {\displaystyle \bigcup_{t\in\mathbb{N}}}\mathscr{H}_{t}\right\} \subseteq\mathscr{F},
\end{equation}
measuring any ${\cal B}\in\mathscr{B}\left({\cal R}_{t}\right)$ under
$\widetilde{{\cal P}}_{{\cal R}_{t}}$ can be replaced by measuring
the event (preimage)\linebreak{}
$\left\{ \left.\omega\in\Omega\right|\left({\cal X}_{t},\boldsymbol{{\cal Y}}_{t}\right)\in{\cal B}\right\} \in\mathscr{H}_{t}$
under another measure, say $\widetilde{{\cal P}}$, defined collectively
for all $t\in\mathbb{N}$ on the general measurable space $\left(\Omega,\mathscr{H}_{\infty}\right)$
as
\begin{flalign*}
\widetilde{{\cal P}}\left(\left\{ \left.\omega\in\Omega\right|\left({\cal X}_{t},\boldsymbol{{\cal Y}}_{t}\right)\in{\cal B}\right\} \right) & \equiv\widetilde{{\cal P}}\left(\left({\cal X}_{t},\boldsymbol{{\cal Y}}_{t}\right)\in{\cal B}\right)\triangleq\widetilde{{\cal P}}_{{\cal R}_{t}}\left({\cal B}\right),\quad\forall{\cal B}\in\mathscr{B}\left({\cal R}_{t}\right).
\end{flalign*}
That is, the restriction of the probability measure $\widetilde{{\cal P}}$
to the $\sigma$-algebra $\mathscr{H}_{\infty}$ is \textit{induced}
by the probability measure $\widetilde{{\cal P}}_{{\cal R}_{\infty}}$
(also see Kolmogorov's Extension Theorem \cite{Elliott1994Hidden}).
Further, in order to define the alternative base measure $\widetilde{{\cal P}}$
fully on $\left(\Omega,\mathscr{F}\right)$, we have to extend its
behavior on the remaining events which belong to the potentially finer
$\sigma$-algebra $\mathscr{F}$ but are not included in $\mathscr{H}_{\infty}$.
However, since we are interested in change of measure only for the
augmented process $\left({\cal X}_{t},\boldsymbol{{\cal Y}}_{t}\right)$,
these events are irrelevant to us. Therefore, $\widetilde{{\cal P}}$
can be defined arbitrarily on these events, as long as it remains
a valid and consistent probability measure.

Now, to finalize the construction of the restriction of $\widetilde{{\cal P}}$
to $\mathscr{H}_{t},\forall t\in\mathbb{N}$, we have to explicitly
specify the density of $\widetilde{{\cal P}}_{{\cal R}_{t}}$, or,
equivalently, of the joint density of the random variables $\left({\cal X}_{t},\boldsymbol{{\cal Y}}_{t}\right)$,
$\widetilde{f}_{t}$, for all $t\in\mathbb{N}$. According to the
statement of Theorem 2, we have to demand that
\begin{flalign}
\widetilde{f}_{t}\left(\mathpzc{x}_{t},\mathpzc{y}_{t}\right) & \equiv\widetilde{f}_{\left.\boldsymbol{{\cal Y}}_{t}\right|{\cal X}_{t}}\left(\left.\mathpzc{y}_{t}\right|\mathpzc{x}_{t}\right)\widetilde{f}_{{\cal X}_{t}}\left(\mathpzc{x}_{t}\right)\nonumber \\
 & =\widetilde{f}_{\boldsymbol{{\cal Y}}_{t}}\left(\mathpzc{y}_{t}\right)f_{{\cal X}_{t}}\left(\mathpzc{x}_{t}\right)\nonumber \\
 & =\left(\prod_{i\in\mathbb{N}_{t}}\widetilde{f}_{{\bf y}_{i}}\left(\boldsymbol{y}_{i}\right)\right)f_{{\cal X}_{t}}\left(\mathpzc{x}_{t}\right)\nonumber \\
 & =\left(\prod_{i\in\mathbb{N}_{t}}\dfrac{\exp\left(-\dfrac{\left\Vert \boldsymbol{y}_{i}\right\Vert _{2}^{2}}{2}\right)}{\sqrt{\left(2\pi\right)^{N}}}\right)f_{{\cal X}_{t}}\left(\mathpzc{x}_{t}\right)\nonumber \\
 & \equiv\dfrac{\exp\left(-\dfrac{1}{2}{\displaystyle \sum_{i\in\mathbb{N}_{t}}}\left\Vert \boldsymbol{y}_{i}\right\Vert _{2}^{2}\right)}{\sqrt{\left(2\pi\right)^{N\left(t+1\right)}}}f_{{\cal X}_{t}}\left(\mathpzc{x}_{t}\right).\label{eq:f_tilde}
\end{flalign}

Next, by definition, we know that, under ${\cal P}$, the joint density
of $\left({\cal X}_{t},\boldsymbol{{\cal Y}}_{t}\right)$ can be expressed
as
\begin{flalign}
f_{t}\left(\mathpzc{x}_{t},\mathpzc{y}_{t}\right) & \equiv f_{\left.\boldsymbol{{\cal Y}}_{t}\right|{\cal X}_{t}}\left(\left.\mathpzc{y}_{t}\right|\mathpzc{x}_{t}\right)f_{{\cal X}_{t}}\left(\mathpzc{x}_{t}\right)\nonumber \\
 & \equiv\left(\prod_{i\in\mathbb{N}_{t}}f_{\left.{\bf y}_{i}\right|X_{i}}\left(\left.\boldsymbol{y}_{i}\right|x_{i}\right)\right)f_{{\cal X}_{t}}\left(\mathpzc{x}_{t}\right)\nonumber \\
 & =\left(\prod_{i\in\mathbb{N}_{t}}\dfrac{\exp\left(\dfrac{\overline{\boldsymbol{y}}_{i}^{\boldsymbol{T}}{\bf C}_{i}^{-1}\overline{\boldsymbol{y}}_{i}}{-2}\right)}{\sqrt{\det\left({\bf C}_{i}\right)\left(2\pi\right)^{N}}}\right)f_{{\cal X}_{t}}\left(\mathpzc{x}_{t}\right)\nonumber \\
 & \equiv\dfrac{\exp\left({\displaystyle \sum_{i\in\mathbb{N}_{t}}}\dfrac{\overline{\boldsymbol{y}}_{i}^{\boldsymbol{T}}{\bf C}_{i}^{-1}\overline{\boldsymbol{y}}_{i}}{-2}\right)}{\left({\displaystyle \prod_{i\in\mathbb{N}_{t}}}\sqrt{\det\left({\bf C}_{i}\right)}\right)\sqrt{\left(2\pi\right)^{N\left(t+1\right)}}}f_{{\cal X}_{t}}\left(\mathpzc{x}_{t}\right),\label{eq:f_normal}
\end{flalign}
where, for all $t\in\mathbb{N}$,
\begin{flalign}
\overline{\boldsymbol{y}}_{t}\equiv\overline{\boldsymbol{y}}_{t}\left(x_{t}\right) & \triangleq\boldsymbol{y}_{t}-\boldsymbol{\mu}_{t}\left(x_{t}\right)\in\mathbb{R}^{N\times1}\quad\text{and}\\
{\bf C}_{t}\equiv{\bf C}_{t}\left(x_{t}\right) & \equiv\boldsymbol{\Sigma}_{t}\left(x_{t}\right)+\sigma_{\xi}^{2}{\bf I}_{N\times N}\in{\cal D}_{{\bf C}},
\end{flalign}
where ${\cal D}_{{\bf C}}$ constitutes a bounded subset of $\mathbb{R}^{N\times N}$.
From (\ref{eq:f_tilde}) and (\ref{eq:f_normal}), it is obvious that
the sufficient condition (\ref{eq:ABSOLUTE_Densities}) of Theorem
1 is satisfied (actually, in this case, we have an equivalence; as
a result, the change of measure is an invertible transformation).
Applying Theorem 1, (\ref{eq:Changed_Expectation}) must be true by
defining the $\left\{ \mathscr{H}_{t}\right\} $-adapted stochastic
process
\begin{align}
\Lambda_{t}\equiv\Lambda_{t}\left({\cal X}_{t},\boldsymbol{{\cal Y}}_{t}\right) & \triangleq\dfrac{f_{t}\left({\cal X}_{t},\boldsymbol{{\cal Y}}_{t}\right)}{\widetilde{f}_{t}\left({\cal X}_{t},\boldsymbol{{\cal Y}}_{t}\right)}\equiv\dfrac{f_{\left.\boldsymbol{{\cal Y}}_{t}\right|{\cal X}_{t}}\left(\left.\boldsymbol{{\cal Y}}_{t}\right|{\cal X}_{t}\right)}{\widetilde{f}_{\left.\boldsymbol{{\cal Y}}_{t}\right|{\cal X}_{t}}\left(\left.\boldsymbol{{\cal Y}}_{t}\right|{\cal X}_{t}\right)}\equiv\dfrac{f_{\left.\boldsymbol{{\cal Y}}_{t}\right|{\cal X}_{t}}\left(\left.\boldsymbol{{\cal Y}}_{t}\right|{\cal X}_{t}\right)}{\widetilde{f}_{\boldsymbol{{\cal Y}}_{t}}\left(\boldsymbol{{\cal Y}}_{t}\right)}\nonumber \\
 & \equiv\dfrac{\exp\left({\displaystyle \sum_{i\in\mathbb{N}_{t}}}\dfrac{\left\Vert {\bf y}_{i}\right\Vert _{2}^{2}-\left(\overline{{\bf y}}_{i}\left(X_{i}\right)\right)^{\boldsymbol{T}}\left({\bf C}_{i}\left(X_{i}\right)\right)^{-1}\overline{{\bf y}}_{i}\left(X_{i}\right)}{2}\right)}{{\displaystyle \prod_{i\in\mathbb{N}_{t}}}\sqrt{\det\left({\bf C}_{i}\left(X_{i}\right)\right)}},
\end{align}
or, alternatively,
\begin{equation}
\Lambda_{t}\equiv{\displaystyle \prod_{i\in\mathbb{N}_{t}}}\lambda_{i}\triangleq{\displaystyle \prod_{i\in\mathbb{N}_{t}}}\dfrac{\exp\left({\displaystyle \sum_{i\in\mathbb{N}_{t}}}\dfrac{\left\Vert {\bf y}_{i}\right\Vert _{2}^{2}-\left(\overline{{\bf y}}_{i}\left(X_{i}\right)\right)^{\boldsymbol{T}}\left({\bf C}_{i}\left(X_{i}\right)\right)^{-1}\overline{{\bf y}}_{i}\left(X_{i}\right)}{2}\right)}{\sqrt{\det\left({\bf C}_{i}\left(X_{i}\right)\right)}},
\end{equation}
therefore completing the proof.
\end{proof}

\subsection{Weak \& ${\cal C}$-Weak Convergence of (Random) Probability Measures}

In the analysis that will take place in Section IV, we will make use
of the notions of weak and conditionally weak (${\cal C}$-weak) convergence
of sequences of probability measures. Thus, let us define these notions
of stochastic convergence consistently, suited at least for the purposes
of our investigation.
\begin{defn}
\label{Def_WEAK}\textbf{(Weak Convergence \cite{BillingsleyMeasures})}
Let ${\cal S}$ be an arbitrary metric space, let $\mathscr{S}\triangleq\mathscr{B}\left({\cal S}\right)$
be the associated Borel $\sigma$-algebra and consider a sequence
of probability measures $\left\{ \pi_{n}\right\} _{n\in\mathbb{N}}$
on $\mathscr{S}$. If $\pi$ constitutes another ``limit'' probability
measure on $\mathscr{S}$ such that \renewcommand{\arraystretch}{1.2}
\begin{equation}
\begin{array}{c}
{\displaystyle \lim_{n\rightarrow\infty}}\pi_{n}\left({\cal A}\right)=\pi\left({\cal A}\right),\\
\forall{\cal A}\in\mathscr{S}\text{ such that }\pi\left(\partial{\cal A}\right)\equiv0,
\end{array}
\end{equation}
\renewcommand{\arraystretch}{1}where $\partial{\cal A}$ denotes
the boundary set of the Borel set ${\cal A}$, then we say that the
sequence $\left\{ \pi_{n}\right\} _{n\in\mathbb{N}}$ converges to
$\pi$ \textit{weakly} or \textit{in the weak sense} and we equivalently
write
\begin{equation}
\pi_{n}\stackrel[n\rightarrow\infty]{{\cal W}}{\longrightarrow}{\cal \pi}.
\end{equation}

\end{defn}
\vspace{-0.5cm}

\noindent \begin{center}
\rule[0.5ex]{0.5\columnwidth}{0.5pt}
\par\end{center}

\vspace{-0.3cm}

Of course, weak convergence of probability measures is equivalent
to weak convergence or convergence in distribution, in case we are
given sequences of $\left(S,\mathscr{S}\right)$-valued random variables
whose induced probability measures converge in the aforementioned
sense.

Next, we present a definition for conditionally weak convergence of
probability measures. To avoid possibly complicating technicalities,
this definition is not presented in full generality. Rather, it is
presented in an appropriately specialized form, which will be used
later on, in the analysis that follows.
\begin{defn}
\label{Def_COND_WEAK}\textbf{(Conditionally Weak Convergence)} Let
$\left(\Omega,\mathscr{F},{\cal P}\right)$ be a base probability
triplet and consider the measurable spaces $\left({\cal S}_{i},\mathscr{S}_{i}\triangleq\mathscr{B}\left({\cal S}_{i}\right)\right),i=\left\{ 1,2\right\} $,
where ${\cal S}_{1}$ and ${\cal S}_{2}$ constitute a complete separable
metric (Polish) space and an arbitrary metric space, respectively.
Also, let $\left\{ X_{1}^{n}:\Omega\rightarrow{\cal S}_{1}\right\} _{n\in\mathbb{N}}$
be a sequence of random variables, let $X_{2}:\Omega\rightarrow{\cal S}_{2}$
be another random variable and consider the sequence of \textit{(regular)
induced conditional probability distributions (or measures)} ${\cal P}_{\left.X_{1}^{n}\right|X_{2}}^{n}:\mathscr{S}_{1}\times\Omega\rightarrow\left[0,1\right]$,
such that
\begin{equation}
{\cal P}_{\left.X_{1}^{n}\right|X_{2}}^{n}\left(\left.{\cal A}\right|X_{2}\left(\omega\right)\right)\equiv{\cal P}\left(\left.X_{1}^{n}\in{\cal A}\right|\sigma\left\{ X_{2}\right\} \right),
\end{equation}
${\cal P}-a.e.$, for any Borel set ${\cal A}\in\mathscr{S}_{1}$.
If $X_{1}:\Omega\rightarrow{\cal S}_{1}$ constitutes a ``limit''
random variable, whose induced conditional measure ${\cal P}_{\left.X_{1}\right|X_{2}}:\mathscr{S}_{1}\times\Omega\rightarrow\left[0,1\right]$
is such that \renewcommand{\arraystretch}{1.3}
\begin{equation}
\begin{array}{c}
{\displaystyle \lim_{n\rightarrow\infty}}{\cal P}_{\left.X_{1}^{n}\right|X_{2}}^{n}\left(\left.{\cal A}\right|X_{2}\left(\omega\right)\right)={\cal P}_{\left.X_{1}\right|X_{2}}\left(\left.{\cal A}\right|X_{2}\left(\omega\right)\right),\\
\forall{\cal A}\in\mathscr{S}_{1}\text{ such that }\pi\left(\partial{\cal A}\right)\equiv0\text{ and }{\cal P}-a.e.,
\end{array}
\end{equation}
\renewcommand{\arraystretch}{1}then we say that the sequence $\left\{ {\cal P}_{\left.X_{1}^{n}\right|X_{2}}^{n}\right\} _{n\in\mathbb{N}}$
converges to ${\cal P}_{\left.X_{1}\right|X_{2}}$ \textit{conditionally
weakly (}${\cal C}$\textit{-weakly)} or \textit{in the conditionally
weak (}${\cal C}$\textit{-weak) sense} and we equivalently write
\begin{equation}
{\cal P}_{\left.X_{1}^{n}\right|X_{2}}^{n}\left(\left.\cdot\right|X_{2}\right)\stackrel[n\rightarrow\infty]{{\cal W}}{\longrightarrow}{\cal P}_{\left.X_{1}\right|X_{2}}\left(\left.\cdot\right|X_{2}\right).
\end{equation}

\end{defn}
\vspace{-0.3cm}

\noindent \begin{center}
\rule[0.5ex]{0.5\columnwidth}{0.5pt}
\par\end{center}

\vspace{-0.3cm}

\begin{rem}
Actually, ${\cal C}$-weak convergence, as defined above, is strongly
related to the more general concepts of \textit{almost sure weak convergence}
and random probability measures. For instance, the reader is referred
to the related articles \cite{Berti2006} and \cite{Grubel2014}.\hfill{}\ensuremath{\blacksquare}
\end{rem}
Further, the following lemma characterizes weak convergence of probability
measures (and random variables) \cite{BillingsleyMeasures}.
\begin{lem}
\noindent \label{Portmanteau-1}\textbf{\textup{(Weak Convergence
\& Expectations)}} Let ${\cal S}$ be an arbitrary metric space and
let $\mathscr{S}\triangleq\mathscr{B}\left({\cal S}\right)$. Suppose
we are given a sequence of random variables $\left\{ X^{n}\right\} _{n\in\mathbb{N}}$
and a ``limit'' $X$, all $\left(S,\mathscr{S}\right)$-valued,
but possibly defined on different base probability spaces, with $\left\{ {\cal P}_{X^{n}}\right\} _{n\in\mathbb{N}}$
and ${\cal P}_{X}$ being their induced probability measures on $\mathscr{S}$,
respectively. Then,
\begin{equation}
X^{n}\stackrel[n\rightarrow\infty]{{\cal D}}{\longrightarrow}X\Leftrightarrow{\cal P}_{X^{n}}\stackrel[n\rightarrow\infty]{{\cal W}}{\longrightarrow}{\cal P}_{X},
\end{equation}
if and only if
\begin{equation}
\mathbb{E}\left\{ f\left(X^{n}\right)\right\} \equiv{\displaystyle \int_{{\cal S}}}f\mathrm{d}{\cal P}_{X^{n}}{\displaystyle \underset{n\rightarrow\infty}{\longrightarrow}}{\displaystyle \int_{{\cal S}}}f\mathrm{d}{\cal P}_{X}\equiv\mathbb{E}\left\{ f\left(X\right)\right\} ,\label{eq:CONV_EXP}
\end{equation}
for all bounded, continuous functions $f:{\cal S}\rightarrow\mathbb{R}$.
\end{lem}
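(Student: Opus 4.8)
The plan is to prove the classical Portmanteau equivalence, passing immediately to the induced measures $\pi_{n}\triangleq{\cal P}_{X^{n}}$ and $\pi\triangleq{\cal P}_{X}$ on the common measurable space $\left({\cal S},\mathscr{S}\right)$; after this reduction the fact that the $X^{n}$ may live on different base spaces becomes irrelevant, and the asserted equivalence between convergence in distribution and weak convergence of induced measures is merely the terminology fixed in Definition \ref{Def_WEAK}. Thus the substance of the lemma is the ``if and only if'' with (\ref{eq:CONV_EXP}), and I would establish the two implications separately.

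For the implication ``(\ref{eq:CONV_EXP}) $\Rightarrow$ weak convergence'', I would first treat closed sets. Given a closed $F\subseteq{\cal S}$ and $\epsilon>0$, the map $f_{\epsilon}\left(x\right)\triangleq\max\left\{ 0,1-\epsilon^{-1}d\left(x,F\right)\right\} $ is bounded by $1$ and continuous on the metric space ${\cal S}$ (here only that $x\mapsto d\left(x,F\right)$ is continuous is used, so no Polish or compactness hypothesis is needed), and it satisfies $\mathbf{1}_{F}\le f_{\epsilon}\le\mathbf{1}_{F^{\epsilon}}$, with $F^{\epsilon}$ the open $\epsilon$-neighbourhood of $F$. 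Then $\limsup_{n}\pi_{n}\left(F\right)\le\lim_{n}\int f_{\epsilon}\,\mathrm{d}\pi_{n}=\int f_{\epsilon}\,\mathrm{d}\pi\le\pi\left(F^{\epsilon}\right)$, and letting $\epsilon\downarrow0$ gives $F^{\epsilon}\downarrow F$ (closedness of $F$), so continuity of $\pi$ from above yields $\limsup_{n}\pi_{n}\left(F\right)\le\pi\left(F\right)$; complementation gives $\liminf_{n}\pi_{n}\left(G\right)\ge\pi\left(G\right)$ for every open $G$. For any $\pi$-continuity set ${\cal A}$, i.e. $\pi\left(\partial{\cal A}\right)\equiv0$, one has $\pi\left({\cal A}^{\circ}\right)=\pi\left(\overline{{\cal A}}\right)=\pi\left({\cal A}\right)$, and sandwiching $\pi_{n}\left({\cal A}\right)$ between $\pi_{n}\left({\cal A}^{\circ}\right)$ and $\pi_{n}\left(\overline{{\cal A}}\right)$ forces $\lim_{n}\pi_{n}\left({\cal A}\right)=\pi\left({\cal A}\right)$, which is exactly weak convergence in the sense of Definition \ref{Def_WEAK}.

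For the converse, assume weak convergence and fix a bounded continuous $f$; by an affine normalization ($g\triangleq\left(f-a\right)/\left(b-a+1\right)$ when $a\le f\le b$ and $f$ nonconstant, the constant case being trivial) it suffices to treat $0\le f<1$. I would invoke the layer-cake identity $\int f\,\mathrm{d}\mu=\int_{0}^{1}\mu\left(\left\{ f>t\right\} \right)\mathrm{d}t$, valid for any probability measure $\mu$. Since $f$ is continuous, $\partial\left\{ f>t\right\} \subseteq\left\{ f=t\right\} $, and the sets $\left\{ f=t\right\} $ are pairwise disjoint over $t$, so $\pi\left(\left\{ f=t\right\} \right)>0$ for at most countably many $t$; hence for Lebesgue-almost every $t\in\left[0,1\right]$ the set $\left\{ f>t\right\} $ is a $\pi$-continuity set, and therefore $\pi_{n}\left(\left\{ f>t\right\} \right)\to\pi\left(\left\{ f>t\right\} \right)$. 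Applying the bounded convergence theorem on $\left[0,1\right]$ (integrands dominated by $1$) gives $\int f\,\mathrm{d}\pi_{n}\to\int f\,\mathrm{d}\pi$, which is (\ref{eq:CONV_EXP}). The argument is routine; the only delicate point, and the reason the lemma is phrased in terms of continuity sets, is that indicators of Borel sets are not continuous, which is precisely what the approximation by $f_{\epsilon}$ and the almost-every-$t$ continuity-set argument are designed to circumvent.
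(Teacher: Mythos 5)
Your proof is correct. Note, however, that the paper does not actually prove this lemma: it is stated as a known, classical result (the Portmanteau theorem) with a citation to Billingsley, so there is no internal argument to compare against. What you supply is the standard self-contained proof, and both directions are sound. For the implication from (\ref{eq:CONV_EXP}) to weak convergence, the Lipschitz-type approximants $f_{\epsilon}\left(x\right)=\max\left\{ 0,1-\epsilon^{-1}d\left(x,F\right)\right\}$, the monotone shrinking $F^{\epsilon}\downarrow F$ for closed $F$, the passage to open sets by complementation, and the sandwich over ${\cal A}^{\circ}\subseteq{\cal A}\subseteq\overline{{\cal A}}$ for a $\pi$-continuity set are exactly the textbook route, and you are right that only continuity of $x\mapsto d\left(x,F\right)$ is needed, so the statement indeed holds on an arbitrary metric space as the lemma asserts. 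For the converse, the layer-cake identity combined with the observation that $\partial\left\{ f>t\right\} \subseteq\left\{ f=t\right\}$ and that at most countably many level sets can carry positive $\pi$-mass, followed by bounded convergence on $\left[0,1\right]$, is likewise a clean and complete argument. The only cosmetic remark is that the first equivalence in the lemma (convergence in distribution versus weak convergence of the induced laws) is, as you say, definitional once everything is transported to $\left({\cal S},\mathscr{S}\right)$, so your reduction to the induced measures at the outset is the right move and disposes of the "different base spaces" caveat immediately.
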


Of course, if we replace weak convergence by ${\cal C}$-weak convergence,
Lemma \ref{Portmanteau-1} continues to hold, but, in this case, (\ref{eq:CONV_EXP})
should be understood in the almost everywhere sense (see, for example,
\cite{Grubel2014}). More specifically, under the generic notation
of Definition \ref{Def_COND_WEAK} and under the appropriate assumptions
according to Lemma \ref{Portmanteau-1}, it will be true that
\begin{equation}
\mathbb{E}\left\{ \left.f\left(X_{1}^{n}\right)\right|X_{2}\right\} \left(\omega\right)\underset{n\rightarrow\infty}{\longrightarrow}\mathbb{E}\left\{ \left.f\left(X_{1}\right)\right|X_{2}\right\} \left(\omega\right),
\end{equation}
for almost all $\omega\in\Omega$.

\section{Problem Formulation \& Statement of Main Results}

In this section, we formulate the problem of interest, that is, in
a nutshell, the problem of approximating a nonlinear MMSE filter by
another (asymptotically optimal) filtering operator, defined by replacing
the true process we would like to filter by an appropriate approximation.
Although we do not deal with such a problem here, such an approximation
would be chosen in order to yield a practically realizable approximate
filtering scheme. We also present the main result of the paper, establishing
sufficient conditions for convergence of the respective approximate
filters, in an indeed strong sense.

Let us start from the beginning. From Theorem \ref{thm:COMCOM}, we
know that
\begin{equation}
\mathbb{E}_{{\cal P}}\left\{ \left.X_{t}\right|\mathscr{Y}_{t}\right\} \equiv\dfrac{\mathbb{E}_{\widetilde{{\cal P}}}\left\{ \left.X_{t}\Lambda_{t}\right|\mathscr{Y}_{t}\right\} }{\mathbb{E}_{\widetilde{{\cal P}}}\left\{ \left.\Lambda_{t}\right|\mathscr{Y}_{t}\right\} },\quad\forall t\in\mathbb{N},\label{eq:CoM_Original}
\end{equation}
where the RHS constitutes an alternative representation for the filter
on the LHS, which constitutes the optimal in the MMSE sense estimator
of the partially observed process $X_{t}$, given the available observations
up to time $t$. If the numerical evaluation of either of the sides
of (\ref{eq:CoM_Original}) is difficult (either we are interested
in a recursive realization of the filter or not), one could focus
on the RHS, where the state and the observations constitute independent
processes, and, keeping the same observations, replace $X_{t}$ by
another process $X_{t}^{\mathtt{A}}$, called the \textit{approximation},
with \textit{resolution }or\textit{ approximation parameter} $\mathtt{A}\in\mathbb{N}$
(for simplicity), also independent of the observations (with respect
to ${\cal \widetilde{P}}$), for which the evaluation of the resulting
``filter'' might be easier. Under some appropriate, well defined
sense, the approximation to the original process improves as $\mathtt{A}\rightarrow\infty$.
This general idea of replacing the true state process with an approximation
is employed in, for instance, \cite{Kushner2001_BOOK,Kushner2008},
and will be employed here, too.

At this point, a natural question arises: Why are we complicating
things with change of measure arguments and not using $X_{t}^{\mathtt{A}}$
directly in the LHS of (\ref{eq:CoM_Original})? Indeed, using classical
results such as the Dominated Convergence Theorem, one could prove
at least pointwise convergence of the respective filter approximations.
The main and most important issue with such an approach is that, in
order for such a filter to be realizable in any way, special attention
must be paid to the choice of the approximation, regarding its stochastic
dependence on the observations process. This is due to the original
stochastic coupling between the state and the observations of the
hidden system of interest. However, using change of measure, one can
find an alternative representation of the filter process, where, under
another probability measure, the state and observations are stochastically
decoupled (independent). This makes the problem much easier, because
the approximation can also be chosen to be independent of the observations.
If we especially restrict our attention to recursive nonlinear filters,
change of measure provides a rather versatile means for discovering
recursive filter realizations. See, for example, the detailed treatment
presented in \cite{Elliott1994Hidden}.

Thus, concentrating on the RHS of (\ref{eq:CoM_Original}), we can
define an \textit{approximate filtering operator} of the process $X_{t}$,
also with resolution $\mathtt{A}\in\mathbb{N}$, as\textit{ }
\begin{equation}
{\cal E}^{\mathtt{A}}\left(\left.X_{t}\right|\mathscr{Y}_{t}\right)\triangleq\dfrac{\mathbb{E}_{\widetilde{{\cal P}}}\left\{ \left.X_{t}^{\mathtt{A}}\Lambda_{t}^{\mathtt{A}}\right|\mathscr{Y}_{t}\right\} }{\mathbb{E}_{\widetilde{{\cal P}}}\left\{ \left.\Lambda_{t}^{\mathtt{A}}\right|\mathscr{Y}_{t}\right\} },\quad\forall t\in\mathbb{N}.\label{eq:CoM_Approx}
\end{equation}
Observe that the above quantity \textit{is not a conditional expectation
of $X_{t}^{\mathtt{A}}$}, because $X_{t}^{\mathtt{A}}$ does not
follow the probability law of the true process of interest, $X_{t}$
\cite{Kushner2008}. Of course, the question is if and under which
sense,
\begin{equation}
{\cal E}^{\mathtt{A}}\left(\left.X_{t}\right|\mathscr{Y}_{t}\right)\overset{?}{\underset{\mathtt{A}\rightarrow\infty}{\longrightarrow}}\mathbb{E}_{{\cal P}}\left\{ \left.X_{t}\right|\mathscr{Y}_{t}\right\} ,
\end{equation}
that is, if and in which sense our chosen approximate filtering operator
is \textit{asymptotically optimal}, as the resolution of the approximation
increases. In other words, we are looking for a class of approximations,
whose members approximate the process $X_{t}$ well, in the sense
that the resulting approximate filtering operators converge to the
true filter as the resolution parameter increases, that is, as $\mathtt{A}\rightarrow\infty$,
and under some appropriate notion of convergence. In this respect,
below we formulate and prove the following theorem, which constitutes
the main result of this paper (recall the definition of ${\cal C}$-weak
convergence given in Section II.C). In the following, $\mathds{1}_{{\cal A}}:\mathbb{R}\rightarrow\left\{ 0,1\right\} $
denotes the indicator of the set ${\cal A}$. Also, for any Borel
set ${\cal A}$, $\mathds{1}_{{\cal A}}\left(\cdot\right)$ constitutes
a Dirac (atomic) probability measure. Equivalently, we write $\mathds{1}_{{\cal A}}\left(\cdot\right)\equiv\delta_{\left(\cdot\right)}\left({\cal A}\right)$. 
\begin{thm}
\label{CONVERGENCE_THEOREM}\textbf{\textup{(Convergence to the Optimal
Filter)}} Pick any natural $T<\infty$ and suppose either of the following:
\begin{itemize}
\item \noindent For all $t\in\mathbb{N}_{T}$, the sequence $\left\{ X_{t}^{\mathtt{A}}\right\} _{\mathtt{A}\in\mathbb{N}}$
is marginally ${\cal C}$-weakly convergent to $X_{t}$, given $X_{t}$,
that is,
\begin{equation}
{\cal P}_{\left.X_{t}^{\mathtt{A}}\right|X_{t}}^{\mathtt{A}}\left(\left.\cdot\right|X_{t}\right)\stackrel[\mathtt{A}\rightarrow\infty]{{\cal W}}{\longrightarrow}\delta_{X_{t}}\left(\cdot\right),\quad\forall t\in\mathbb{N}_{T}.\label{eq:C_Weak_Theorem}
\end{equation}

\item \noindent For all $t\in\mathbb{N}_{T}$, the sequence $\left\{ X_{t}^{\mathtt{A}}\right\} _{\mathtt{A}\in\mathbb{N}}$
is (marginally) convergent to $X_{t}$ in probability, that is,
\begin{equation}
X_{t}^{\mathtt{A}}\stackrel[\mathtt{A}\rightarrow\infty]{{\cal P}}{\longrightarrow}X_{t},\quad\forall t\in\mathbb{N}_{T}.
\end{equation}

\end{itemize}
Then, there exists a measurable subset $\widehat{\Omega}_{T}\subseteq\Omega$
with ${\cal P}$-measure at least $1-\left(T+1\right)^{1-CN}\exp\left(-CN\right)$,
such that
\begin{equation}
\lim_{\mathtt{A}\rightarrow\infty}\sup_{t\in\mathbb{N}_{T}}\sup_{\omega\in\widehat{\Omega}_{T}}\left|{\cal E}^{\mathtt{A}}\left(\left.X_{t}\right|\mathscr{Y}_{t}\right)-\mathbb{E}_{{\cal P}}\left\{ \left.X_{t}\right|\mathscr{Y}_{t}\right\} \right|\left(\omega\right)\equiv0,
\end{equation}
for any free, finite constant $C\ge1$. In other words, the convergence
of the respective approximate filtering operators is compact in $t\in\mathbb{N}$
and, with probability at least $1-\left(T+1\right)^{1-CN}\exp\left(-CN\right)$,
uniform in $\omega$.
\end{thm}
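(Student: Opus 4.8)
The plan is to exploit the change-of-measure representation of Theorem \ref{thm:COMCOM} to reduce the claim to a \emph{uniform} integral-convergence statement for an equicontinuous family of functions integrated against the finite-dimensional laws of the state and of the approximation. Fix $T<\infty$. Under $\widetilde{{\cal P}}$ the observation process $\{{\bf y}_i\}$ is white noise, independent of both $\{X_i\}$ and $\{X_i^{\mathtt{A}}\}$, so the ``freezing'' rule for conditional expectations yields the canonical $\mathscr{Y}_t$-measurable versions
\[
\widehat{X}_t(\omega)=\dfrac{\int x_t\,\Lambda_t(x,{\bf y}(\omega))\,\mathrm{d}\mu_t(x)}{\int \Lambda_t(x,{\bf y}(\omega))\,\mathrm{d}\mu_t(x)},\qquad {\cal E}^{\mathtt{A}}(X_t|\mathscr{Y}_t)(\omega)=\dfrac{\int x_t\,\Lambda_t(x,{\bf y}(\omega))\,\mathrm{d}\mu_t^{\mathtt{A}}(x)}{\int \Lambda_t(x,{\bf y}(\omega))\,\mathrm{d}\mu_t^{\mathtt{A}}(x)},
\]
where $\mu_t,\mu_t^{\mathtt{A}}$ are the laws on ${\cal Z}^{t+1}$ of $(X_0,\dots,X_t)$ and $(X_0^{\mathtt{A}},\dots,X_t^{\mathtt{A}})$, $\Lambda_t(x,y)\triangleq\prod_{i\in\mathbb{N}_t}\lambda_i(x_i,y_i)$ with $\lambda_i$ read off from (\ref{eq:LR_SPECIFIC}), and ${\bf y}(\omega)\triangleq({\bf y}_0(\omega),\dots,{\bf y}_t(\omega))$ (here I use that $\mu_t$ is the same under ${\cal P}$ and $\widetilde{{\cal P}}$).

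Second, I would construct $\widehat{\Omega}_T$ explicitly. Writing ${\bf y}_i=\boldsymbol{\mu}_i(X_i)+\sqrt{{\bf C}_i(X_i)}\,\boldsymbol{u}_i$ with $\boldsymbol{u}_i\overset{i.i.d.}{\sim}{\cal N}({\bf 0},{\bf I}_{N\times N})$ under ${\cal P}$, put $\widehat{\Omega}_T\triangleq\bigcap_{i\in\mathbb{N}_T}\{\|\boldsymbol{u}_i\|_2^2\le\gamma_T\}$; this set is $\mathtt{A}$-independent, and a Chernoff bound on $\chi^2_N$ tails plus a union bound over the $T+1$ indices give ${\cal P}(\widehat{\Omega}_T)\ge 1-(T+1)\,{\cal P}(\chi^2_N>\gamma_T)\ge 1-(T+1)^{1-CN}\exp(-CN)$ once $\gamma_T$ is taken of order $CN(1+\log(T+1))$ — this is where the stated probability and the free constant $C\ge 1$ are produced. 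On $\widehat{\Omega}_T$ one has $\|{\bf y}_i(\omega)\|_2\le\mu_{sup}+\sqrt{\lambda_{sup}\gamma_T}$, so, using Assumption 1 (in particular $\lambda_{inf}>1$, $\lambda_{sup},\mu_{sup}<\infty$) and compactness of ${\cal Z}$, each factor obeys $0<\ell_T\le\lambda_i(x_i,{\bf y}_i(\omega))\le L_T<\infty$ uniformly over $x_i\in{\cal Z}$, $i\le T$, $\omega\in\widehat{\Omega}_T$; hence $\ell_T^{T+1}\le\Lambda_t(x,{\bf y}(\omega))\le L_T^{T+1}$ and both denominators above are $\ge\ell_T^{T+1}>0$ on $\widehat{\Omega}_T$. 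The elementary identity $\tfrac{a'}{b'}-\tfrac ab=\tfrac{(a'-a)b-a(b'-b)}{b'b}$, together with $|x_t|\le\sup_{x\in{\cal Z}}|x|<\infty$, then gives
\[
\sup_{t\in\mathbb{N}_T}\sup_{\omega\in\widehat{\Omega}_T}\bigl|{\cal E}^{\mathtt{A}}(X_t|\mathscr{Y}_t)-\widehat{X}_t\bigr|(\omega)\ \le\ K_T\,\max_{t\in\mathbb{N}_T}\sup_{\omega\in\widehat{\Omega}_T}\bigl(|a_t^{\mathtt{A}}-a_t|+|b_t^{\mathtt{A}}-b_t|\bigr)(\omega)
\]
for a finite $K_T$, where $a_t,a_t^{\mathtt{A}},b_t,b_t^{\mathtt{A}}$ denote the numerators/denominators.

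Third, and this is the crux, I would show the right-hand side vanishes as $\mathtt{A}\to\infty$. For each fixed $t\le T$ consider ${\cal F}_{T,t}\triangleq\{x\mapsto x_t\,\Lambda_t(x,{\bf y}(\omega)):\omega\in\widehat{\Omega}_T\}$ on the compact metric space ${\cal Z}^{t+1}$ (and the analogous family with $x_t$ replaced by $1$). Invoking Assumption 2 (uniform Lipschitz continuity of $\boldsymbol{\mu}_i$ and of $\boldsymbol{\Sigma}_i$ in the state), the Lipschitz continuity of ${\bf C}\mapsto{\bf C}^{-1}$ and ${\bf C}\mapsto(\det{\bf C})^{-1/2}$ on the compact set $\{{\bf C}:\lambda_{inf}{\bf I}\preceq{\bf C}\preceq\lambda_{sup}{\bf I}\}$, and the uniform bound on $\|{\bf y}_i(\omega)\|_2$ over $\widehat{\Omega}_T$, each $x_i\mapsto\lambda_i(x_i,{\bf y}_i(\omega))$ is Lipschitz with a constant depending only on $(T,C,N)$ and the problem parameters; since a product of at most $T+1$ uniformly bounded, uniformly Lipschitz factors is uniformly Lipschitz, ${\cal F}_{T,t}$ is uniformly bounded and uniformly equicontinuous. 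Next I would verify $\mu_t^{\mathtt{A}}\Rightarrow\mu_t$ weakly: in the in-probability case this is immediate, since marginal convergence in probability of $X_i^{\mathtt{A}}$ for the finitely many $i\le t$ (a property preserved between the equivalent measures ${\cal P}$ and $\widetilde{{\cal P}}$) gives joint convergence in probability, hence in distribution; in the ${\cal C}$-weak case, applying the Portmanteau characterization of Lemma \ref{Portmanteau-1} (as in the remark following it, and recalling Definition \ref{Def_COND_WEAK}) to the bounded continuous test functions $z\mapsto\min\{|z-w|,1\}$ yields $\mathbb{E}\{\min\{|X_t^{\mathtt{A}}-X_t|,1\}\mid X_t\}\to0$ ${\cal P}$-a.e., hence $X_t^{\mathtt{A}}\to X_t$ in probability, which reduces it to the previous case. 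A standard metrization argument then closes the loop: given $\varepsilon>0$, cover ${\cal F}_{T,t}$ by finitely many sup-norm $\varepsilon$-balls (Arzel\`a--Ascoli), estimate $|\int f\,\mathrm{d}\mu_t^{\mathtt{A}}-\int f\,\mathrm{d}\mu_t|$ for $f$ near a center $f_j$ by $2\varepsilon+|\int f_j\,\mathrm{d}\mu_t^{\mathtt{A}}-\int f_j\,\mathrm{d}\mu_t|$, and use $\mu_t^{\mathtt{A}}\Rightarrow\mu_t$ at the finitely many centers; this gives $\sup_{\omega\in\widehat{\Omega}_T}|a_t^{\mathtt{A}}-a_t|(\omega)=\sup_{f\in{\cal F}_{T,t}}|\int f\,\mathrm{d}\mu_t^{\mathtt{A}}-\int f\,\mathrm{d}\mu_t|\to0$, and likewise for $b_t^{\mathtt{A}}-b_t$; taking the maximum over the finitely many $t\in\mathbb{N}_T$ and combining with the previous display finishes the proof. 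The main obstacle is precisely this last step — converting the pointwise-in-$\omega$ weak convergence of the trajectory laws into convergence \emph{uniform over $\omega\in\widehat{\Omega}_T$}, which forces the uniform boundedness/equicontinuity of the integrand family and is exactly where Assumptions 1--2 and compactness of ${\cal Z}$ are all genuinely used; a secondary technical nuisance is calibrating $\gamma_T$ so as to reproduce the exact bound $1-(T+1)^{1-CN}\exp(-CN)$.
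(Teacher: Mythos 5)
Your proposal is correct, and while it shares the paper's skeleton (the change--of--measure representation of Theorem \ref{thm:COMCOM}, the quotient decomposition of the filter difference, a denominator bounded away from zero, and the construction of $\widehat{\Omega}_{T}$ from $\chi^{2}$ tail bounds plus a union bound exactly as in Lemma \ref{Lemma_WHP}), it resolves the crux --- uniformity over $\omega\in\widehat{\Omega}_{T}$ of the convergence of the numerators and denominators --- by a genuinely different mechanism. The paper proceeds ``pathwise'': using Lemmata \ref{Elementary_Lemma_1}--\ref{Elementary_Lemma_3} it derives the explicit Lipschitz-type bound $|\widehat{\Lambda}_{t}-\widehat{\Lambda}_{t}^{\mathtt{A}}|\le\Theta(\cdot)\sum_{i\le t}|X_{i}-X_{i}^{\mathtt{A}}|$ with a $\left\{ \mathscr{Y}_{t}\right\} $-adapted prefactor that is controlled on $\widehat{\Omega}_{T}$, takes conditional expectations under $\widetilde{{\cal P}}$ using independence, and then invokes ${\cal L}_{1}$ convergence of the marginals (Lemmata \ref{L1WEAK_Lemma}--\ref{L1WEAK_LEMMA}); this yields an explicit constant $K_{G}(T)$ and hence a quantitative rate in terms of $\sup_{t}\mathbb{E}\{|X_{t}-X_{t}^{\mathtt{A}}|\}$. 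You instead recast both conditional expectations as integrals of a single $\omega$-parametrized integrand against the trajectory laws $\mu_{t}$ and $\mu_{t}^{\mathtt{A}}$, show that the integrand family indexed by $\omega\in\widehat{\Omega}_{T}$ is uniformly bounded and uniformly Lipschitz (using the same Assumptions 1--2 and the boundedness of $\|{\bf y}_{i}\|_{2}$ on $\widehat{\Omega}_{T}$), establish weak convergence of the \emph{joint} trajectory laws from the marginal hypotheses, and conclude via Arzel\`a--Ascoli and a finite $\varepsilon$-net. Your route is rate-free but structurally cleaner: it isolates the uniformity-in-$\omega$ issue as relative compactness of the integrand family, needs only a uniform modulus of continuity rather than the explicit constants $K_{DET}$, $K_{INV}$, and bypasses the paper's explicit product inequality (Lemma \ref{Elementary_Lemma_2}) and the intermediate ${\cal L}_{1}$ step; the paper's route buys explicit, $N$- and $T$-dependent error constants. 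Your reduction of the ${\cal C}$-weak hypothesis to convergence in probability via the bounded continuous test function $z\mapsto\min\{|z-w|,1\}$ is essentially the paper's Lemma \ref{L1WEAK_Lemma} in disguise, and your remark that convergence in probability transfers between the equivalent measures ${\cal P}$ and $\widetilde{{\cal P}}$ correctly fills a point the paper leaves implicit.
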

\noindent Interestingly, as noted in the beginning of this section,
the mode of convergence of the resulting approximate filtering operator
is particularly strong. In fact, it is interesting that, for fixed
$T$, the approximate filter ${\cal E}^{\mathtt{A}}\left(\left.X_{t}\right|\mathscr{Y}_{t}\right)$
converges to $\mathbb{E}_{{\cal P}}\left\{ \left.X_{t}\right|\mathscr{Y}_{t}\right\} $
(uniformly) in a set that approaches the certain event, exponentially
in $N$. That is, convergence to the optimal filter \textbf{\textit{tends}}\textit{
to be in the uniformly almost everywhere sense, at an exponential
rate (in $N$)}. Consequently, it is revealed that the dimensionality
of the observations process essentially stabilizes the behavior of
the approximate filter, in a stochastic sense. Along the lines of
the discussion presented above, it is clear that Theorem \ref{CONVERGENCE_THEOREM}
provides a way of quantitatively justifying Egoroff's theorem \cite{Richardson2009measure},
which bridges almost uniform convergence with almost sure convergence,
however in an indeed abstract fashion. 
\begin{rem}
The ${\cal C}$-weak convergence condition (\ref{eq:C_Weak_Theorem})
is a rather strong one. In particular, as we show later in Lemma \ref{L1WEAK_Lemma}
(see Section IV), it implies ${\cal L}_{1}$ convergence, which means
that it also implies (marginal) convergence in probability (which
constitutes the alternative sufficient condition of Theorem \ref{CONVERGENCE_THEOREM}).
In simple words, (\ref{eq:C_Weak_Theorem}) resembles a situation
where, at any time step, one is given or defines an approximation
to the original process, in the sense that, \textit{conditioned on
the original process} at the same time step, the probability of being
equal to the latter approaches unity. At this point, because ${\cal C}$-weak
convergence is stronger than (and implies) convergence in probability,
one could wonder why we presented both as alternative sufficient conditions
for filter convergence in Theorem \ref{CONVERGENCE_THEOREM} (and
also in Lemma \ref{RD_Derivatives_Lemma} presented in Section IV).
The reason is that, contrary to convergence in probability, condition
(\ref{eq:C_Weak_Theorem}) provides a nice \textit{structural} criterion
for \textit{constructing} state process approximations in a natural
way, which is also consistent with our intuition: If, at any time
step, we could observe the value of true state process, then the respective
value of the approximation at that same time step should be ``sufficiently
close'' to the value of the state. Condition (\ref{eq:C_Weak_Theorem})
expresses this intuitive idea and provides a version of the required
sense of ``closeness''.\hfill{}\ensuremath{\blacksquare}
\end{rem}
In order to demonstrate the applicability of Theorem \ref{CONVERGENCE_THEOREM},
as well as demystify the ${\cal C}$-weak convergence condition (\ref{eq:C_Weak_Theorem}),
let us present a simple but illustrative example. The example refers
to a class of approximate grid based filters, based on the so called
marginal approximation \cite{KalPetGRID2014,Pages2005optimal}, according
to which the (compactly restricted) state process is fed into a uniform
spatial quantizer of variable resolution. As we will see, this intuitively
reasonable approximation idea constitutes a simple instance of the
condition (\ref{eq:C_weak_COND}).

More specifically, assume that $X_{t}\in\left[a,b\right]\equiv{\cal Z}$,
$\forall t\in\mathbb{N}$, almost surely. Let us discretize ${\cal Z}$
uniformly into $\mathtt{A}$ subintervals, of identical length, called
cells. The $l$-th cell and its respective center are denoted as ${\cal Z}_{\mathtt{A}}^{l}$
and $x_{\mathtt{A}}^{l},l\in\mathbb{N}_{\mathtt{A}}^{+}$. Then, letting
${\cal X}_{\mathtt{A}}\triangleq\left\{ x_{\mathtt{A}}^{l}\right\} _{l\in\mathbb{N}_{L_{S}}^{+}}$,
the \textit{quantizer} ${\cal Q}_{\mathtt{A}}:\left({\cal Z},\mathscr{B}\left({\cal Z}\right)\right)\mapsto\left({\cal X}_{\mathtt{A}},2^{{\cal X}_{\mathtt{A}}}\right)$
is defined as the bijective and measurable function which uniquely
maps the $l$-th cell to the respective \textit{reconstruction point}
$x_{\mathtt{A}}^{l}$, $\forall l\in\mathbb{N}_{\mathtt{A}}^{+}$.
That is, ${\cal Q}_{\mathtt{A}}\left(x\right)\triangleq x_{\mathtt{A}}^{l}$
if and only if $x\in{\cal Z}_{\mathtt{A}}^{l}$ \cite{KalPetGRID2014}.
Having defined the quantizer ${\cal Q}_{\mathtt{A}}\left(\cdot\right)$,
the \textit{Marginal Quantization} of the state is defined as \cite{Pages2005optimal}
\begin{equation}
X_{t}^{\mathtt{A}}\left(\omega\right)\triangleq{\cal Q}_{\mathtt{A}}\left(X_{t}\left(\omega\right)\right)\in{\cal X}_{\mathtt{A}},\;\forall t\in\mathbb{N},\;{\cal P}-a.s.,
\end{equation}
where $\mathtt{A}\in\mathbb{N}$ is identified as the approximation
parameter. That is, $X_{t}$ is approximated by its nearest neighbor
on the cell grid. That is, the state is represented by a discrete
set of reconstruction points, each one of them uniquely corresponding
to a member of a partition of ${\cal Z}$.

By construction of marginal state approximations, it can be easily
shown that \cite{KalPetGRID2014}
\begin{equation}
X_{t}^{\mathtt{A}}\left(\omega\right)\stackrel[\mathtt{A}\rightarrow\infty]{{\cal P}-a.s.}{\longrightarrow}X_{t}\left(\omega\right),
\end{equation}
a fact that will be used in the following. Of course, almost sure
convergence implies convergence in probability and, as we will see,
${\cal C}$-weak convergence as well. First, let us determine the
conditional probability measure ${\cal P}_{\left.X_{t}^{\mathtt{A}}\right|X_{t}}^{\mathtt{A}}\left(\left.\mathrm{d}x\right|X_{t}\right)$.
Since knowing $X_{t}$ uniquely determines the value of $X_{t}^{\mathtt{A}}$,
it must be true that
\begin{flalign}
{\cal P}_{\left.X_{t}^{\mathtt{A}}\right|X_{t}}^{\mathtt{A}}\left(\left.\mathrm{d}x\right|X_{t}\right) & \equiv{\cal P}_{\left.{\cal Q}_{\mathtt{A}}\left(X_{t}\right)\right|X_{t}}^{\mathtt{A}}\left(\left.\mathrm{d}x\right|X_{t}\right)\nonumber \\
 & \equiv\delta_{{\cal Q}_{\mathtt{A}}\left(X_{t}\right)}\left(\mathrm{d}x\right),\quad{\cal P}-a.s..
\end{flalign}
However, from Lemma \ref{Portmanteau-1}, we know that weak convergence
of measures is equivalent to showing that the expectations $\mathbb{E}\left\{ \left.f\left(X_{t}^{\mathtt{A}}\right)\right|X_{t}\right\} $
converge to $\mathbb{E}\left\{ \left.f\left(X_{t}\right)\right|X_{t}\right\} \equiv f\left(X_{t}\right)$,
for all bounded and continuous $f\left(\cdot\right)$, almost everywhere.
Indeed,
\begin{flalign}
\mathbb{E}\left\{ \left.f\left(X_{t}^{\mathtt{A}}\right)\right|X_{t}\right\} \left(\omega\right) & \equiv\int_{{\cal Z}}f\left(x\right){\cal P}_{\left.X_{t}^{\mathtt{A}}\right|X_{t}}^{\mathtt{A}}\left(\left.\mathrm{d}x\right|X_{t}\left(\omega\right)\right)\nonumber \\
 & \equiv\int_{{\cal Z}}f\left(x\right)\delta_{{\cal Q}_{\mathtt{A}}\left(X_{t}\left(\omega\right)\right)}\left(\mathrm{d}x\right)\nonumber \\
 & \equiv f\left({\cal Q}_{\mathtt{A}}\left(X_{t}\left(\omega\right)\right)\right)\stackrel[\mathtt{A}\rightarrow\infty]{{\cal P}-a.s.}{\longrightarrow}f\left(X_{t}\left(\omega\right)\right),
\end{flalign}
due to the continuity of $f\left(\cdot\right)$. Consequently, we
have shown that
\begin{equation}
{\cal P}_{\left.X_{t}^{\mathtt{A}}\right|X_{t}}^{\mathtt{A}}\left(\left.\cdot\right|X_{t}\right)\equiv\delta_{{\cal Q}_{\mathtt{A}}\left(X_{t}\right)}\left(\cdot\right)\stackrel[\mathtt{A}\rightarrow\infty]{{\cal W}}{\longrightarrow}\delta_{X_{t}}\left(\cdot\right),
\end{equation}
fulfilling the first requirement of Theorem \ref{CONVERGENCE_THEOREM}.
This very simple example constitutes the basis for constructing more
complicated and cleverly designed state approximations (for example,
using stochastic quantizers). The challenge here is to come up with
such approximations exhibiting nice properties, which would potentially
lead to the development of effective approximate recursive or, in
general, sequential filtering schemes, well suited for dynamic inference
in complex partially observable stochastic nonlinear systems. As far
as grid based approximate recursive filtering is concerned, a relatively
complete discussion of the problem is presented in the recent paper
\cite{KalPetGRID2014}, where marginal state approximations are also
treated in full generality.

An important and direct consequence of Theorem \ref{CONVERGENCE_THEOREM},
also highlighted by the example presented above, is that, interestingly,
the nature of the state process is completely irrelevant when one
is interested in convergence of the respective approximate filters,
in the respective sense of the aforementioned theorem. This fact has
the following pleasing and intuitive interpretation: It implies that
if any of the two conditions of Theorem \ref{CONVERGENCE_THEOREM}
are satisfied, then we should forget about the internal stochastic
structure of the state, and instead focus exclusively on the way the
latter is being observed through time. That is, \textit{we do not
really care about what we partially observe, but how well we observe
it; and if we observe it well, we can filter it well, too}. Essentially,
the observations should constitute a stable functional of the state,
of course in some well defined sense. In this work, this notion of
stability is expressed precisely through Assumption 1 and 2, presented
earlier in Section II.

Note, however, that the existence of a consistent approximate filter
in the sense of Theorem \ref{CONVERGENCE_THEOREM} does not automatically
imply that this filter will be efficiently implementable; usually,
we would like such a filter to admit a recursive/sequential representation
(or possibly a semirecursive one \cite{Daum2005}). As it turns out,
this can happen when the chosen state approximation admits a valid
semimartingale type representation (in addition to satisfying one
of the sufficient conditions of Theorem \ref{CONVERGENCE_THEOREM}).
For example, the case where the state is Markovian and the chosen
state approximation is of the marginal type, discussed in the basic
example presented above, is treated in detail in \cite{KalPetGRID2014}.
\begin{rem}
The filter representation (\ref{eq:CoM_Original}) coincides with
the respective expression employed in importance sampling \cite{PARTICLE2002tutorial,RebeschiniRamon2013_1}.
Since, under the alternative measure $\widetilde{{\cal P}}$, the
observations and state constitute statistically independent processes,
one can directly sample from the (joint) distribution of the state,
fixing the observations to their respective value at each time $t$
(of course, assuming that a relevant ``sampling device'' exists).
However, note that that due to the assumptions of Theorem \ref{CONVERGENCE_THEOREM},
related at least to convergence in probability of the corresponding
state approximations, the aforementioned result cannot be used directly
in order to show convergence of importance sampling or related particle
filtering techniques, which are directly related to empirical measures.
The possible ways Theorem \ref{CONVERGENCE_THEOREM} can be utilized
in order to provide asymptotic guarantees for particle filtering (using
additional assumptions) constitutes an interesting open topic for
further research.\hfill{}\ensuremath{\blacksquare}
\end{rem}
The rest of the paper is fully devoted in the detailed proof of Theorem
\ref{CONVERGENCE_THEOREM}.

\section{Proof of Theorem \ref{CONVERGENCE_THEOREM}}

In order to facilitate the presentation, the proof is divided in a
number of subsections.

\subsection{Two Basic Lemmata, Linear Algebra - Oriented}

Parts of the following useful results will be employed several times
in the analysis that follows\footnote{In this paper, Lemma \ref{Elementary_Lemma_2} presented in this subsection
will be applied only for scalars (and where the metric considered
coincides with the absolute value). However, the general version of
the result (considering matrices and submultiplicative norms) is presented
for the sake of generality.}. 
\begin{lem}
\label{Elementary_Lemma_1}Consider arbitrary matrices ${\bf A}\in\mathbb{C}^{N_{1}\times M_{1}}$,
${\bf B}\in\mathbb{C}^{N_{1}\times M_{1}}$, ${\bf X}\in\mathbb{C}^{M_{2}\times N_{2}}$,
${\bf Y}\in\mathbb{C}^{M_{2}\times N_{2}}$, and let $\left\Vert \cdot\right\Vert _{\mathfrak{M}}$
be any matrix norm. Then, the following hold:
\begin{itemize}
\item If either 

\begin{itemize}
\item $N_{1}\equiv M_{1}\equiv1$, or
\item $N_{1}\equiv N_{2}\equiv M_{1}\equiv M_{2}$ and $\left\Vert \cdot\right\Vert _{\mathfrak{M}}$
is submultiplicative,
\end{itemize}

then
\begin{align}
\left\Vert {\bf A}{\bf X}-{\bf B}{\bf Y}\right\Vert _{\mathfrak{M}} & \le\left\Vert {\bf A}\right\Vert _{\mathfrak{M}}\left\Vert {\bf X}-{\bf Y}\right\Vert _{\mathfrak{M}}+\left\Vert {\bf Y}\right\Vert _{\mathfrak{M}}\left\Vert {\bf A}-{\bf B}\right\Vert _{\mathfrak{M}}.\label{eq:FIRST}
\end{align}

\item If $N_{2}\equiv1$, $M_{1}\equiv M_{2}$ and $\left\Vert \cdot\right\Vert _{\mathfrak{M}}$
constitutes any subordinate matrix norm to the $\ell_{p}$ vector
norm, $\left\Vert \cdot\right\Vert _{p}$, then
\begin{align}
\left\Vert {\bf A}{\bf X}-{\bf B}{\bf Y}\right\Vert _{p} & \le\left\Vert {\bf A}\right\Vert _{\mathfrak{M}}\left\Vert {\bf X}-{\bf Y}\right\Vert _{p}+\left\Vert {\bf Y}\right\Vert _{p}\left\Vert {\bf A}-{\bf B}\right\Vert _{\mathfrak{M}}.\label{eq:FIRST-1}
\end{align}

\end{itemize}
\end{lem}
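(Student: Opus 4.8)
The plan is to reduce the whole statement to a single algebraic identity followed by the triangle inequality. First I would note that, whenever the displayed matrix products are well-defined (which forces $M_{1}\equiv M_{2}$ in every case listed), one has the exact decomposition
\[
{\bf A}{\bf X}-{\bf B}{\bf Y}\equiv{\bf A}\left({\bf X}-{\bf Y}\right)+\left({\bf A}-{\bf B}\right){\bf Y},
\]
obtained simply by adding and subtracting ${\bf A}{\bf Y}$. Applying the triangle inequality for the norm $\left\Vert \cdot\right\Vert _{\mathfrak{M}}$ then gives
\[
\left\Vert {\bf A}{\bf X}-{\bf B}{\bf Y}\right\Vert _{\mathfrak{M}}\le\left\Vert {\bf A}\left({\bf X}-{\bf Y}\right)\right\Vert _{\mathfrak{M}}+\left\Vert \left({\bf A}-{\bf B}\right){\bf Y}\right\Vert _{\mathfrak{M}},
\]
so that the only thing left is to bound each of the two product terms by the product of the corresponding norms; this is exactly where the dimension/norm hypotheses of each case enter.

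For the first bullet, in the sub-case $N_{1}\equiv M_{1}\equiv1$ the matrices ${\bf A},{\bf B}$ act by scalar multiplication, so $\left\Vert {\bf A}\left({\bf X}-{\bf Y}\right)\right\Vert _{\mathfrak{M}}=\left|{\bf A}\right|\left\Vert {\bf X}-{\bf Y}\right\Vert _{\mathfrak{M}}$ and likewise for the second term, and the bound (\ref{eq:FIRST}) follows at once; this scalar form is in fact the only instance used later (see the footnote). In the sub-case $N_{1}\equiv N_{2}\equiv M_{1}\equiv M_{2}$ with $\left\Vert \cdot\right\Vert _{\mathfrak{M}}$ submultiplicative, submultiplicativity directly yields $\left\Vert {\bf A}\left({\bf X}-{\bf Y}\right)\right\Vert _{\mathfrak{M}}\le\left\Vert {\bf A}\right\Vert _{\mathfrak{M}}\left\Vert {\bf X}-{\bf Y}\right\Vert _{\mathfrak{M}}$ and $\left\Vert \left({\bf A}-{\bf B}\right){\bf Y}\right\Vert _{\mathfrak{M}}\le\left\Vert {\bf A}-{\bf B}\right\Vert _{\mathfrak{M}}\left\Vert {\bf Y}\right\Vert _{\mathfrak{M}}$, and summing proves (\ref{eq:FIRST}). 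For the second bullet, $N_{2}\equiv1$ makes ${\bf X},{\bf Y}$ column vectors, and a matrix norm subordinate to (equivalently, compatible with) $\left\Vert \cdot\right\Vert _{p}$ satisfies, by definition, $\left\Vert {\bf M}{\bf v}\right\Vert _{p}\le\left\Vert {\bf M}\right\Vert _{\mathfrak{M}}\left\Vert {\bf v}\right\Vert _{p}$ for conformable ${\bf M},{\bf v}$; applying this to ${\bf A}\left({\bf X}-{\bf Y}\right)$ and to $\left({\bf A}-{\bf B}\right){\bf Y}$ and summing gives (\ref{eq:FIRST-1}).

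I do not expect a genuine obstacle: the statement is just a careful packaging of the add-and-subtract trick. The only point deserving attention is bookkeeping — verifying that in each of the three listed configurations the matrix products conform, and invoking the precise norm property (absolute homogeneity in the scalar case, submultiplicativity in the square case, compatibility with $\left\Vert \cdot\right\Vert _{p}$ in the matrix-times-vector case) that legitimizes the two product estimates. As the footnote indicates, the sequel only ever uses the scalar form, which is the cheapest of the three.
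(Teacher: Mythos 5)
Your proof is correct and follows exactly the same route as the paper's: add and subtract ${\bf A}{\bf Y}$, apply the triangle inequality, and then bound each product term using the relevant norm property. The paper only writes out the submultiplicative square case and leaves the other two as "similar"; you have simply filled in those details explicitly.
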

\begin{proof}[Proof of Lemma \ref{Elementary_Lemma_1}]
We prove the result only for the case where $N_{1}\equiv N_{2}\equiv M_{1}\equiv M_{2}$
and $\left\Vert \cdot\right\Vert _{\mathfrak{M}}$ is submultiplicative.
By definition of such a matrix norm,
\begin{flalign}
\left\Vert {\bf A}{\bf X}-{\bf B}{\bf Y}\right\Vert _{\mathfrak{M}} & \equiv\left\Vert {\bf A}{\bf X}+{\bf A}{\bf Y}-{\bf A}{\bf Y}-{\bf B}{\bf Y}\right\Vert _{\mathfrak{M}}\nonumber \\
 & \equiv\left\Vert {\bf A}\left({\bf X}-{\bf Y}\right)+\left({\bf A}-{\bf B}\right){\bf Y}\right\Vert _{\mathfrak{M}}\nonumber \\
 & \le\left\Vert {\bf A}\left({\bf X}-{\bf Y}\right)\right\Vert _{\mathfrak{M}}+\left\Vert \left({\bf A}-{\bf B}\right){\bf Y}\right\Vert _{\mathfrak{M}}\nonumber \\
 & \le\left\Vert {\bf A}\right\Vert _{\mathfrak{M}}\left\Vert \left({\bf X}-{\bf Y}\right)\right\Vert _{\mathfrak{M}}+\left\Vert {\bf Y}\right\Vert _{\mathfrak{M}}\left\Vert \left({\bf A}-{\bf B}\right)\right\Vert _{\mathfrak{M}},
\end{flalign}
apparently completing the proof. The results for the other two cases
considered in Lemma \ref{Elementary_Lemma_1} can be readily shown
following similar procedure. \end{proof}
\begin{lem}
\label{Elementary_Lemma_2}Consider the collections of arbitrary,
square matrices
\begin{flalign*}
\left\{ {\bf A}_{i}\in\mathbb{C}^{N\times N}\right\} _{i\in\mathbb{N}_{n}}\quad\text{and}\quad & \left\{ {\bf B}_{i}\in\mathbb{C}^{N\times N}\right\} _{i\in\mathbb{N}_{n}}.
\end{flalign*}
 Then, for any submultiplicative matrix norm $\left\Vert \cdot\right\Vert _{\mathfrak{M}}$,
it is true that
\begin{align}
\left\Vert \prod_{i=0}^{n}{\bf A}_{i}-\prod_{i=0}^{n}{\bf B}_{i}\right\Vert _{\mathfrak{M}} & \le\sum_{i=0}^{n}\left(\prod_{j=0}^{i-1}\left\Vert {\bf A}_{j}\right\Vert _{\mathfrak{M}}\right)\left(\prod_{j=i+1}^{n}\left\Vert {\bf B}_{j}\right\Vert _{\mathfrak{M}}\right)\left\Vert {\bf A}_{i}-{\bf B}_{i}\right\Vert _{\mathfrak{M}}.\label{eq:Product_Inequality}
\end{align}
\end{lem}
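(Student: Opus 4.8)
The plan is to prove the telescoping-product inequality (\ref{eq:Product_Inequality}) by induction on $n$, using Lemma \ref{Elementary_Lemma_1} as the basic two-factor estimate at each step. The base case $n\equiv 0$ is trivial: the right-hand side reduces to the single term with $i\equiv 0$, whose two empty products equal $1$ (by the usual convention $\prod_{j=0}^{-1}\equiv\prod_{j=1}^{0}\equiv 1$), so the inequality becomes the identity $\left\Vert {\bf A}_{0}-{\bf B}_{0}\right\Vert _{\mathfrak{M}}\le\left\Vert {\bf A}_{0}-{\bf B}_{0}\right\Vert _{\mathfrak{M}}$.

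For the inductive step, suppose (\ref{eq:Product_Inequality}) holds for $n-1$ in place of $n$. Write $\prod_{i=0}^{n}{\bf A}_{i}\equiv\left(\prod_{i=0}^{n-1}{\bf A}_{i}\right){\bf A}_{n}$ and similarly for the ${\bf B}_{i}$'s, and apply the submultiplicative form (\ref{eq:FIRST}) of Lemma \ref{Elementary_Lemma_1} with the identifications ${\bf A}\leftarrow\prod_{i=0}^{n-1}{\bf A}_{i}$, ${\bf X}\leftarrow{\bf A}_{n}$, ${\bf B}\leftarrow\prod_{i=0}^{n-1}{\bf B}_{i}$, ${\bf Y}\leftarrow{\bf B}_{n}$. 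This yields
\begin{align}
\left\Vert \prod_{i=0}^{n}{\bf A}_{i}-\prod_{i=0}^{n}{\bf B}_{i}\right\Vert _{\mathfrak{M}} & \le\left\Vert \prod_{i=0}^{n-1}{\bf A}_{i}\right\Vert _{\mathfrak{M}}\left\Vert {\bf A}_{n}-{\bf B}_{n}\right\Vert _{\mathfrak{M}}+\left\Vert {\bf B}_{n}\right\Vert _{\mathfrak{M}}\left\Vert \prod_{i=0}^{n-1}{\bf A}_{i}-\prod_{i=0}^{n-1}{\bf B}_{i}\right\Vert _{\mathfrak{M}}.\nonumber
\end{align}
Now bound the first product by submultiplicativity, $\left\Vert \prod_{i=0}^{n-1}{\bf A}_{i}\right\Vert _{\mathfrak{M}}\le\prod_{i=0}^{n-1}\left\Vert {\bf A}_{i}\right\Vert _{\mathfrak{M}}$, which exhibits the first term as precisely the $i\equiv n$ summand in (\ref{eq:Product_Inequality}) (its trailing empty product $\prod_{j=n+1}^{n}$ being $1$). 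For the second term, invoke the induction hypothesis to expand $\left\Vert \prod_{i=0}^{n-1}{\bf A}_{i}-\prod_{i=0}^{n-1}{\bf B}_{i}\right\Vert _{\mathfrak{M}}$ as a sum over $i\in\mathbb{N}_{n-1}$, and distribute the factor $\left\Vert {\bf B}_{n}\right\Vert _{\mathfrak{M}}$ into each summand; since each such summand already carries $\prod_{j=i+1}^{n-1}\left\Vert {\bf B}_{j}\right\Vert _{\mathfrak{M}}$, multiplying by $\left\Vert {\bf B}_{n}\right\Vert _{\mathfrak{M}}$ turns it into $\prod_{j=i+1}^{n}\left\Vert {\bf B}_{j}\right\Vert _{\mathfrak{M}}$, so these become exactly the summands with $i\in\mathbb{N}_{n-1}$ in (\ref{eq:Product_Inequality}). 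Adding the $i\equiv n$ term recovers the full right-hand side, completing the induction.

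The only thing requiring any care — and the nearest thing to an obstacle — is bookkeeping the empty-product conventions so that the $i\equiv n$ boundary term and the $i\equiv 0$ term emerge with the correct (unit) factors; once those conventions are fixed, the argument is a routine one-factor-at-a-time peeling. An alternative, non-inductive route would be to write the difference as the explicit telescoping sum $\prod_{i=0}^{n}{\bf A}_{i}-\prod_{i=0}^{n}{\bf B}_{i}\equiv\sum_{i=0}^{n}\left(\prod_{j=0}^{i-1}{\bf A}_{j}\right)\left({\bf A}_{i}-{\bf B}_{i}\right)\left(\prod_{j=i+1}^{n}{\bf B}_{j}\right)$ and then apply the triangle inequality and submultiplicativity termwise; this is essentially the unrolled form of the same induction and could be presented instead if a closed-form derivation is preferred.
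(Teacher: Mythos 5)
Your proof is correct and takes essentially the same route as the paper: a repeated application of Lemma \ref{Elementary_Lemma_1} combined with submultiplicativity, organized as an induction. The only cosmetic difference is that you peel the product from the last factor while the paper peels from the first; the bookkeeping of empty products and the telescoping-sum alternative you mention are both sound.
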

\begin{proof}[Proof of Lemma \ref{Elementary_Lemma_2}]
Applying Lemma \ref{Elementary_Lemma_1} to the LHS of (\ref{eq:Product_Inequality}),
we get
\begin{flalign}
\left\Vert \prod_{i=0}^{n}{\bf A}_{i}-\prod_{i=0}^{n}{\bf B}_{i}\right\Vert _{\mathfrak{M}} & \equiv\left\Vert {\bf A}_{0}\prod_{i=1}^{n}{\bf A}_{i}-{\bf B}_{0}\prod_{i=1}^{n}{\bf B}_{i}\right\Vert _{\mathfrak{M}}\nonumber \\
 & \le\left\Vert {\bf A}_{0}\right\Vert _{\mathfrak{M}}\left\Vert \prod_{i=1}^{n}{\bf A}_{i}-\prod_{i=1}^{n}{\bf B}_{i}\right\Vert _{\mathfrak{M}}+\left\Vert \prod_{i=1}^{n}{\bf B}_{i}\right\Vert _{\mathfrak{M}}\left\Vert {\bf A}_{0}-{\bf B}_{0}\right\Vert _{\mathfrak{M}}.
\end{flalign}
The repeated application of Lemma \ref{Elementary_Lemma_1} to the
quantity multiplying $\left\Vert {\bf A}_{0}\right\Vert _{\mathfrak{M}}$
on the RHS of the expression above yields
\begin{multline}
\left\Vert \prod_{i=0}^{n}{\bf A}_{i}-\prod_{i=0}^{n}{\bf B}_{i}\right\Vert _{\mathfrak{M}}\le\left\Vert {\bf A}_{0}\right\Vert _{\mathfrak{M}}\left\Vert {\bf A}_{1}\right\Vert _{\mathfrak{M}}\left\Vert \prod_{i=2}^{n}{\bf A}_{i}-\prod_{i=2}^{n}{\bf B}_{i}\right\Vert _{\mathfrak{M}}\\
+\left\Vert {\bf A}_{0}\right\Vert _{\mathfrak{M}}\left\Vert \prod_{i=2}^{n}{\bf B}_{i}\right\Vert _{\mathfrak{M}}\left\Vert {\bf A}_{1}-{\bf B}_{1}\right\Vert _{\mathfrak{M}}+\left\Vert \prod_{i=1}^{n}{\bf B}_{i}\right\Vert _{\mathfrak{M}}\left\Vert {\bf A}_{0}-{\bf B}_{0}\right\Vert _{\mathfrak{M}},\label{eq:Prod_Lemma}
\end{multline}
where, the ``temporal pattern'' is apparent. Indeed, iterating (\ref{eq:Prod_Lemma})
and proceeding inductively, we end up with the bound
\begin{align}
\left\Vert \prod_{i=0}^{n}{\bf A}_{i}-\prod_{i=0}^{n}{\bf B}_{i}\right\Vert _{\mathfrak{M}} & \le\sum_{i=0}^{n}\left(\prod_{j=0}^{i-1}\left\Vert {\bf A}_{j}\right\Vert _{\mathfrak{M}}\right)\left\Vert \prod_{j=i+1}^{n}{\bf B}_{j}\right\Vert _{\mathfrak{M}}\left\Vert {\bf A}_{i}-{\bf B}_{i}\right\Vert _{\mathfrak{M}}
\end{align}
and the result readily follows invoking the submultiplicativeness
of $\left\Vert \cdot\right\Vert _{\mathfrak{M}}$.
\end{proof}

\subsection{Preliminary Results}

Here, we present and prove a number of preliminary results, which
will help us towards the proof of an important lemma, which will be
the key to showing the validity of Theorem \ref{CONVERGENCE_THEOREM}.

First, under Assumption 2, stated in Section II.A, the following trivial
lemmata hold.
\begin{lem}
\label{Elementary_Corollary_1}Each member of the functional family
$\left\{ \boldsymbol{\Sigma}_{t}:{\cal Z}\mapsto{\cal D}_{\boldsymbol{\Sigma}}\right\} _{t\in\mathbb{N}}$
is Lipschitz continuous on ${\cal Z}$, in the Euclidean topology
induced by the Frobenius norm. That is, $\forall t\in\mathbb{N}$,
\begin{equation}
\left\Vert \boldsymbol{\Sigma}_{t}\left(x\right)-\boldsymbol{\Sigma}_{t}\left(y\right)\right\Vert _{F}\le\left(NK_{\boldsymbol{\Sigma}}\right)\left|x-y\right|,
\end{equation}
$\forall\left(x,y\right)\in{\cal Z}\times{\cal Z}$, for the same
constant $K_{\boldsymbol{\Sigma}}\in\mathbb{R}_{+}$, as defined in
Assumption 2. The same also holds for the family $\left\{ {\bf C}_{t}:{\cal Z}\mapsto{\cal D}_{{\bf C}}\right\} _{t\in\mathbb{N}}$.\end{lem}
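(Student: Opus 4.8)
The plan is to reduce the Frobenius-norm Lipschitz bound directly to the entrywise Lipschitz estimate supplied by Assumption 2. First I would expand, by the very definition of the Frobenius norm, $\left\Vert \boldsymbol{\Sigma}_{t}\left(x\right)-\boldsymbol{\Sigma}_{t}\left(y\right)\right\Vert _{F}^{2}\equiv\sum_{i,j=1}^{N,N}\left|\boldsymbol{\Sigma}_{t}^{ij}\left(x\right)-\boldsymbol{\Sigma}_{t}^{ij}\left(y\right)\right|^{2}$, a sum over exactly $N^{2}$ terms. Then I would insert the entrywise bound $\left|\boldsymbol{\Sigma}_{t}^{ij}\left(x\right)-\boldsymbol{\Sigma}_{t}^{ij}\left(y\right)\right|\le K_{\boldsymbol{\Sigma}}\left|x-y\right|$, which by Assumption 2 holds uniformly in $t\in\mathbb{N}$ and in $\left(i,j\right)\in\mathbb{N}_{N}^{+}\times\mathbb{N}_{N}^{+}$, into each summand, obtaining $\left\Vert \boldsymbol{\Sigma}_{t}\left(x\right)-\boldsymbol{\Sigma}_{t}\left(y\right)\right\Vert _{F}^{2}\le N^{2}K_{\boldsymbol{\Sigma}}^{2}\left|x-y\right|^{2}$ for every $\left(x,y\right)\in{\cal Z}\times{\cal Z}$ and every $t\in\mathbb{N}$. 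Taking nonnegative square roots and using $\sqrt{N^{2}}\equiv N$ then yields the asserted Lipschitz constant $NK_{\boldsymbol{\Sigma}}$.

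For the family $\left\{ {\bf C}_{t}:{\cal Z}\mapsto{\cal D}_{{\bf C}}\right\} _{t\in\mathbb{N}}$, I would simply observe that the additive shift cancels under subtraction: ${\bf C}_{t}\left(x\right)-{\bf C}_{t}\left(y\right)\equiv\left(\boldsymbol{\Sigma}_{t}\left(x\right)+\sigma_{\xi}^{2}{\bf I}_{N\times N}\right)-\left(\boldsymbol{\Sigma}_{t}\left(y\right)+\sigma_{\xi}^{2}{\bf I}_{N\times N}\right)\equiv\boldsymbol{\Sigma}_{t}\left(x\right)-\boldsymbol{\Sigma}_{t}\left(y\right)$, so the identical Frobenius bound, with the same constant $NK_{\boldsymbol{\Sigma}}$, transfers verbatim.

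There is essentially no obstacle here, which is precisely why the statement is labelled trivial in the text. The only bookkeeping point worth flagging is that the sum ranges over $N^{2}$ matrix entries whereas the resulting Lipschitz constant grows only like $N$; this discrepancy is exactly absorbed by the square root intrinsic to the Frobenius norm. A cruder route bounding $\left\Vert \cdot\right\Vert _{F}$ by $\sum_{i,j}\left|\cdot_{ij}\right|$ and then entrywise would instead produce the worse constant $N^{2}K_{\boldsymbol{\Sigma}}$, so the squared-sum computation above is the one to carry out. As in Assumption 2, all statements pass over to the almost sure sense when $x,y$ are replaced by $X_{t}\left(\omega\right)$ and the argument evaluated pathwise.
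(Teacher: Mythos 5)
Your proposal is correct and follows essentially the same route as the paper: expand the squared Frobenius norm as a sum of $N^{2}$ entrywise differences, apply the uniform entrywise bound $K_{\boldsymbol{\Sigma}}\left|x-y\right|$ from Assumption 2, and take the square root to obtain the constant $NK_{\boldsymbol{\Sigma}}$, with the claim for ${\bf C}_{t}$ following because the $\sigma_{\xi}^{2}{\bf I}_{N\times N}$ term cancels in the difference. Nothing is missing.
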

\begin{proof}[Proof of Lemma \ref{Elementary_Corollary_1}]
By definition of the Frobenius norm,
\begin{flalign}
\left\Vert \boldsymbol{\Sigma}_{t}\left(x\right)-\boldsymbol{\Sigma}_{t}\left(y\right)\right\Vert _{F} & \equiv\sqrt{\sum_{\left(i,j\right)\in\mathbb{N}_{N}^{+}\times\mathbb{N}_{N}^{+}}\left(\boldsymbol{\Sigma}_{t}^{ij}\left(x\right)-\boldsymbol{\Sigma}_{t}^{ij}\left(y\right)\right)^{2}}\nonumber \\
 & \le\sqrt{\sum_{\left(i,j\right)\in\mathbb{N}_{N}^{+}\times\mathbb{N}_{N}^{+}}K_{\boldsymbol{\Sigma}}^{2}\left|x-y\right|^{2}}\nonumber \\
 & \equiv\sqrt{N^{2}K_{\boldsymbol{\Sigma}}^{2}\left|x-y\right|^{2}},\quad\forall t\in\mathbb{N}
\end{flalign}
and our first claim follows. The second follows trivially if we recall
the definition of each ${\bf C}_{t}\left(x\right)$. \end{proof}
\begin{lem}
\label{Elementary_Corollary_2}For each member of the functional family
$\left\{ {\bf C}_{t}:{\cal Z}\mapsto{\cal D}_{{\bf C}}\right\} _{t\in\mathbb{N}}$,
it is true that, $\forall t\in\mathbb{N}$,
\begin{align}
\left|\det\left({\bf C}_{t}\left(x\right)\right)-\det\left({\bf C}_{t}\left(y\right)\right)\right| & \le\left(NK_{DET}\right)\left\Vert {\bf C}_{t}\left(x\right)-{\bf C}_{t}\left(y\right)\right\Vert _{F},
\end{align}
$\forall\left(x,y\right)\in{\cal Z}\times{\cal Z}$, for some bounded
constant $K_{DET}\equiv K_{DET}\left(N\right)\in\mathbb{R}_{+}$,
possibly dependent on $N$ but independent of $t$.\end{lem}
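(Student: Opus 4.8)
The plan is to exploit that the determinant is a polynomial in the matrix entries, whose gradient is expressed through the adjugate, together with the a priori boundedness of the range ${\cal D}_{{\bf C}}$ of the maps ${\bf C}_{t}$. Concretely, fix $t\in\mathbb{N}$ and $x,y\in{\cal Z}$, and consider the segment ${\bf M}_{s}\triangleq\left(1-s\right){\bf C}_{t}\left(y\right)+s{\bf C}_{t}\left(x\right)$, $s\in\left[0,1\right]$. First I would invoke Jacobi's formula, $\dfrac{\mathrm{d}}{\mathrm{d}s}\det\left({\bf M}_{s}\right)=\mathrm{tr}\!\left(\mathrm{adj}\left({\bf M}_{s}\right)\left({\bf C}_{t}\left(x\right)-{\bf C}_{t}\left(y\right)\right)\right)$, and apply the ordinary mean value theorem to the scalar map $s\mapsto\det\left({\bf M}_{s}\right)$ to produce some $s^{\star}\in\left(0,1\right)$ with
\begin{equation}
\det\left({\bf C}_{t}\left(x\right)\right)-\det\left({\bf C}_{t}\left(y\right)\right)=\mathrm{tr}\!\left(\mathrm{adj}\left({\bf M}_{s^{\star}}\right)\left({\bf C}_{t}\left(x\right)-{\bf C}_{t}\left(y\right)\right)\right).
\end{equation}
Then the Cauchy--Schwarz inequality for the Frobenius (trace) inner product, $\left|\mathrm{tr}\left({\bf A}{\bf B}\right)\right|\le\left\Vert {\bf A}\right\Vert _{F}\left\Vert {\bf B}\right\Vert _{F}$, immediately yields $\left|\det\left({\bf C}_{t}\left(x\right)\right)-\det\left({\bf C}_{t}\left(y\right)\right)\right|\le\left\Vert \mathrm{adj}\left({\bf M}_{s^{\star}}\right)\right\Vert _{F}\left\Vert {\bf C}_{t}\left(x\right)-{\bf C}_{t}\left(y\right)\right\Vert _{F}$.

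The remaining work is to bound $\left\Vert \mathrm{adj}\left({\bf M}_{s^{\star}}\right)\right\Vert _{F}$ by a constant that does not depend on $t$, $x$ or $y$. Since ${\cal D}_{{\bf C}}$ is bounded, there is a finite $R$ with $\left\Vert {\bf M}\right\Vert _{F}\le R$ for every ${\bf M}\in{\cal D}_{{\bf C}}$; because a Frobenius ball is convex and ${\bf M}_{s^{\star}}$ is a convex combination of ${\bf C}_{t}\left(x\right),{\bf C}_{t}\left(y\right)\in{\cal D}_{{\bf C}}$, one also has $\left\Vert {\bf M}_{s^{\star}}\right\Vert _{F}\le R$. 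Each entry of $\mathrm{adj}\left({\bf M}_{s^{\star}}\right)$ is, up to sign, an $\left(N-1\right)\times\left(N-1\right)$ minor of ${\bf M}_{s^{\star}}$, which by Hadamard's inequality is bounded in absolute value by the product of the Euclidean norms of the participating columns, hence by $R^{N-1}$; summing $N^{2}$ such squared entries gives $\left\Vert \mathrm{adj}\left({\bf M}_{s^{\star}}\right)\right\Vert _{F}\le NR^{N-1}$. Choosing $K_{DET}\triangleq R^{N-1}$ --- a quantity depending only on $N$ and the fixed set ${\cal D}_{{\bf C}}$, and in particular independent of $t$ and of $\left(x,y\right)$ --- closes the argument.

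The only delicate point, and the one I would flag as the main (if mild) obstacle, is the last step: one needs the adjugate to remain uniformly bounded along the \emph{entire} segment joining ${\bf C}_{t}\left(x\right)$ and ${\bf C}_{t}\left(y\right)$, not merely at its endpoints, and this is precisely what convexity of the Frobenius ball secures (the polynomial entries of the adjugate are then automatically bounded on that ball). Note that no regularity of the maps ${\bf C}_{t}$ is invoked here --- only Assumption 1 (boundedness of ${\cal D}_{{\bf C}}$, uniform in $t$). The Lipschitz continuity of ${\bf C}_{t}$ established in Lemma \ref{Elementary_Corollary_1} will be combined with the present estimate in the next step to conclude that $x\mapsto\det\left({\bf C}_{t}\left(x\right)\right)$ is itself Lipschitz on ${\cal Z}$, uniformly in $t$.
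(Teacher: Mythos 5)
Your proof is correct, and it takes a genuinely different route from the paper's. The paper treats $\det\left(\cdot\right)$ simply as a polynomial in the $N^{2}$ entries: since ${\cal D}_{{\bf C}}$ is bounded, the partial derivatives (cofactors, i.e.\ sums of $\left(N-1\right)$-fold products of entries) are uniformly bounded by some $K_{DET}$, giving $\left|\det\left({\bf C}_{t}\left(x\right)\right)-\det\left({\bf C}_{t}\left(y\right)\right)\right|\le K_{DET}\left\Vert {\bf C}_{t}\left(x\right)-{\bf C}_{t}\left(y\right)\right\Vert _{1}$ (entrywise $\ell_{1}$), and the factor $N$ then appears from the norm comparison $\left\Vert \cdot\right\Vert _{1}\le N\left\Vert \cdot\right\Vert _{F}$ for vectors of length $N^{2}$. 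You instead use the integral/mean-value form of Jacobi's formula along the segment ${\bf M}_{s}$, followed by Cauchy--Schwarz for the trace inner product and a Hadamard bound on the adjugate. The two arguments are close cousins --- the adjugate entries you bound are exactly the cofactors the paper's $K_{DET}$ implicitly controls --- but yours has two concrete advantages: it produces an explicit constant $K_{DET}\equiv R^{N-1}$ rather than an unspecified ``maximized product of entries,'' and it cleanly isolates the one genuine subtlety (uniform boundedness of the adjugate along the \emph{whole} segment, not just at the endpoints), which you correctly resolve by passing to the convex Frobenius ball containing ${\cal D}_{{\bf C}}$; the paper's componentwise argument sidesteps this by never leaving the coordinate axes, at the cost of a vaguer constant. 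Both yield the same final form $\left(NK_{DET}\right)\left\Vert {\bf C}_{t}\left(x\right)-{\bf C}_{t}\left(y\right)\right\Vert _{F}$ with $K_{DET}$ independent of $t$, which is all the downstream lemmata require.
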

\begin{proof}[Proof of Lemma \ref{Elementary_Corollary_2}]
As a consequence of the fact that the determinant of a matrix can
be expressed as a polynomial function in $N^{2}$ variables (for example,
see the Leibniz formula), it must be true that, $\forall t\in\mathbb{N}$,
\begin{flalign}
\left|\det\left({\bf C}_{t}\left(x\right)\right)-\det\left({\bf C}_{t}\left(y\right)\right)\right| & \le K_{DET}\sum_{\left(i,j\right)\in\mathbb{N}_{N}^{+}\times\mathbb{N}_{N}^{+}}\left|{\bf C}_{t}^{ij}\left(x\right)-{\bf C}_{t}^{ij}\left(y\right)\right|\nonumber \\
 & \equiv K_{DET}\left\Vert {\bf C}_{t}\left(x\right)-{\bf C}_{t}\left(y\right)\right\Vert _{1},
\end{flalign}
where the constant $K_{DET}$ depends on \textit{maximized} (using
the fact that the domain ${\cal D}_{{\bf C}}$ is bounded) $\left(N-1\right)$-fold
products of elements of ${\bf C}_{t}\left(x\right)$ and ${\bf C}_{t}\left(y\right)$,
with respect to $x$ (resp. $y$) and $t$. Consequently, although
$K_{DET}$ may depend on $N$, it certainly does not depend on $t$.
Now, since the $\ell_{1}$ entrywise norm of an $N\times N$ matrix
corresponds to the norm of a vector with $N^{2}$ elements, we may
further bound the right had side of the expression above by the Frobenius
norm of ${\bf C}_{t}\left(x\right)-{\bf C}_{t}\left(y\right)$, yielding
\begin{align}
\left|\det\left({\bf C}_{t}\left(x\right)\right)-\det\left({\bf C}_{t}\left(y\right)\right)\right| & \le NK_{DET}\left\Vert {\bf C}_{t}\left(x\right)-{\bf C}_{t}\left(y\right)\right\Vert _{F},
\end{align}
which is what we were set to prove.\end{proof}
\begin{rem}
The fact that the constant $K_{DET}$ may be a function of the dimension
of the observation vector, $N$, does not constitute a significant
problem throughout our analysis, simply because $N$ is always considered
a \textit{finite and fixed} parameter of our problem. However, it
is true that the (functional) way $N$ appears in the various constants
in our derived expressions can potentially affect speed of convergence
and, for that reason, it constitutes an important analytical aspect.
Therefore, throughout the analysis presented below, a great effort
has been made in order to keep the dependence of our bounds on $N$
within reasonable limits.\hfill{}\ensuremath{\blacksquare} 
\end{rem}
We also present another useful lemma, related to the expansiveness
of each member of the functional family $\left\{ {\bf C}_{t}^{-1}:{\cal Z}\mapsto{\cal D}_{{\bf C}^{-1}}\right\} _{t\in\mathbb{N}}$.
\begin{lem}
\label{Elementary_Lemma_3}Each member of the functional family $\left\{ {\bf C}_{t}^{-1}:{\cal Z}\mapsto{\cal D}_{{\bf C}^{-1}}\right\} _{t\in\mathbb{N}}$
is Lipschitz continuous on ${\cal Z}$, in the Euclidean topology
induced by the Frobenius norm. That is, $\forall t\in\mathbb{N}$,
\begin{equation}
\left\Vert {\bf C}_{t}^{-1}\left(x\right)-{\bf C}_{t}^{-1}\left(y\right)\right\Vert _{F}\le K_{INV}\left|x-y\right|,
\end{equation}
$\forall\left(x,y\right)\in{\cal Z}\times{\cal Z}$, for some bounded
constant $K_{INV}\equiv K_{INV}\left(N\right)\in\mathbb{R}_{+}$,
possibly dependent on $N$ but independent of $t$.\end{lem}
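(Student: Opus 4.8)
The plan is to reduce everything to the classical resolvent-type identity for matrix inverses. For any two invertible matrices ${\bf A}$ and ${\bf B}$ of the same size, one has ${\bf A}^{-1}-{\bf B}^{-1}\equiv{\bf A}^{-1}\left({\bf B}-{\bf A}\right){\bf B}^{-1}$. I would apply this with ${\bf A}\equiv{\bf C}_{t}\left(x\right)$ and ${\bf B}\equiv{\bf C}_{t}\left(y\right)$; both matrices are symmetric and (uniformly) positive definite by construction, since ${\bf C}_{t}\left(\cdot\right)\equiv\boldsymbol{\Sigma}_{t}\left(\cdot\right)+\sigma_{\xi}^{2}{\bf I}_{N\times N}\succ{\bf 0}$, so they are certainly invertible and the identity is legitimate. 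Taking Frobenius norms on both sides and using the mixed submultiplicativity bounds $\left\Vert {\bf M}{\bf X}\right\Vert _{F}\le\left\Vert {\bf M}\right\Vert _{2}\left\Vert {\bf X}\right\Vert _{F}$ and $\left\Vert {\bf X}{\bf M}\right\Vert _{F}\le\left\Vert {\bf X}\right\Vert _{F}\left\Vert {\bf M}\right\Vert _{2}$ (equivalently, a single application of Lemma \ref{Elementary_Lemma_1} to the factored product together with submultiplicativity of $\left\Vert \cdot\right\Vert _{F}$), I would obtain
\[
\left\Vert {\bf C}_{t}^{-1}\left(x\right)-{\bf C}_{t}^{-1}\left(y\right)\right\Vert _{F}\le\left\Vert {\bf C}_{t}^{-1}\left(x\right)\right\Vert _{2}\left\Vert {\bf C}_{t}^{-1}\left(y\right)\right\Vert _{2}\left\Vert {\bf C}_{t}\left(x\right)-{\bf C}_{t}\left(y\right)\right\Vert _{F}.
\]

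It then remains to bound the three factors on the right, all uniformly in $t$. For the two spectral norms, since ${\bf C}_{t}\left(x\right)$ is symmetric positive definite we have $\left\Vert {\bf C}_{t}^{-1}\left(x\right)\right\Vert _{2}\equiv1/\lambda_{min}\left({\bf C}_{t}\left(x\right)\right)\le1/\lambda_{inf}$ by Assumption 1 (and likewise for $y$), with $\lambda_{inf}>1$ finite and independent of both the argument and of $t$. For the third factor, Lemma \ref{Elementary_Corollary_1} gives $\left\Vert {\bf C}_{t}\left(x\right)-{\bf C}_{t}\left(y\right)\right\Vert _{F}\le NK_{\boldsymbol{\Sigma}}\left|x-y\right|$. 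Combining the three estimates yields $\left\Vert {\bf C}_{t}^{-1}\left(x\right)-{\bf C}_{t}^{-1}\left(y\right)\right\Vert _{F}\le\left(NK_{\boldsymbol{\Sigma}}/\lambda_{inf}^{2}\right)\left|x-y\right|$ for all $\left(x,y\right)\in{\cal Z}\times{\cal Z}$ and all $t\in\mathbb{N}$, so one may take $K_{INV}\triangleq NK_{\boldsymbol{\Sigma}}/\lambda_{inf}^{2}$ (indeed $K_{INV}\le NK_{\boldsymbol{\Sigma}}$, as $\lambda_{inf}>1$), a bounded constant possibly dependent on $N$ but independent of $t$, exactly as claimed.

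There is essentially no genuine obstacle here: the statement is a routine consequence of the resolvent identity, the uniform positive-definiteness supplied by Assumption 1, and the Frobenius Lipschitz bound of Lemma \ref{Elementary_Corollary_1}. The only points that need a little care are (i) justifying the mixed-norm inequality $\left\Vert {\bf M}{\bf X}\right\Vert _{F}\le\left\Vert {\bf M}\right\Vert _{2}\left\Vert {\bf X}\right\Vert _{F}$ --- if one prefers to avoid it, one can instead bound $\left\Vert {\bf C}_{t}^{-1}\left(x\right)\right\Vert _{F}\le\sqrt{N}\left\Vert {\bf C}_{t}^{-1}\left(x\right)\right\Vert _{2}\le\sqrt{N}/\lambda_{inf}$ and use pure submultiplicativity of $\left\Vert \cdot\right\Vert _{F}$, at the cost of the slightly worse but still $t$-independent constant $K_{INV}=N^{2}K_{\boldsymbol{\Sigma}}/\lambda_{inf}^{2}$ --- and (ii), consistent with the preceding remark on the role of $N$, keeping the $N$-dependence of $K_{INV}$ as mild as possible, which is why I would favour the spectral-norm form of the argument.
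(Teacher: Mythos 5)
Your proof is correct, but it takes a genuinely different route from the paper's. The paper proves this lemma by writing ${\bf C}_{t}^{-1}\equiv\mathrm{adj}\left({\bf C}_{t}\right)/\det\left({\bf C}_{t}\right)$ and then splitting via Lemma \ref{Elementary_Lemma_1} into a term controlled by the Lipschitz continuity of the adjugate (each entry being an $\left(N-1\right)\times\left(N-1\right)$ minor, handled by a variant of Lemma \ref{Elementary_Corollary_2}) and a term controlled by the Lipschitz continuity of the determinant itself, together with the eigenvalue bound $\left\Vert \mathrm{adj}\left({\bf C}_{t}\left(y\right)\right)\right\Vert _{F}\le\sqrt{N}\det\left({\bf C}_{t}\left(y\right)\right)$ (which uses $\lambda_{inf}>1$); the resulting constant is $N^{3}\left(K_{DET}+K_{det}\right)K_{\boldsymbol{\Sigma}}/\lambda_{inf}^{N}$, subsequently massaged by maximizing $N^{3}\lambda_{inf}^{-N}$ over $N$. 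Your resolvent-identity argument ${\bf A}^{-1}-{\bf B}^{-1}\equiv{\bf A}^{-1}\left({\bf B}-{\bf A}\right){\bf B}^{-1}$, combined with $\left\Vert {\bf C}_{t}^{-1}\left(\cdot\right)\right\Vert _{2}\le1/\lambda_{inf}$ and Lemma \ref{Elementary_Corollary_1}, bypasses the determinant and adjugate machinery entirely and yields the cleaner, fully explicit constant $NK_{\boldsymbol{\Sigma}}/\lambda_{inf}^{2}$, whose $N$-dependence is only linear --- a tangible improvement given the paper's own stated concern (in the remark following Lemma \ref{Elementary_Corollary_2}) about keeping the $N$-dependence of the constants mild. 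The only ingredient you use beyond the paper's toolkit is the mixed-norm inequality $\left\Vert {\bf M}{\bf X}\right\Vert _{F}\le\left\Vert {\bf M}\right\Vert _{2}\left\Vert {\bf X}\right\Vert _{F}$, which is standard and which you correctly note can be replaced by $\left\Vert {\bf C}_{t}^{-1}\right\Vert _{F}\le\sqrt{N}\left\Vert {\bf C}_{t}^{-1}\right\Vert _{2}$ plus ordinary submultiplicativity at the cost of an extra factor of $N$. Both approaches are sound; yours is shorter and sharper, while the paper's has the side benefit that Lemma \ref{Elementary_Corollary_2} is needed elsewhere in the proof of Lemma \ref{RD_Derivatives_Lemma} anyway.
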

\begin{proof}[Proof of Lemma \ref{Elementary_Lemma_3}]
As a consequence of Laplace's formula for the determinant of a matrix
and invoking Lemma \ref{Elementary_Lemma_1}, it is true that
\begin{flalign}
 & \hspace{-2pt}\hspace{-2pt}\hspace{-2pt}\hspace{-2pt}\hspace{-2pt}\hspace{-2pt}\hspace{-2pt}\hspace{-2pt}\hspace{-2pt}\hspace{-2pt}\hspace{-2pt}\hspace{-2pt}\hspace{-2pt}\hspace{-2pt}\hspace{-2pt}\left\Vert {\bf C}_{t}^{-1}\left(x\right)-{\bf C}_{t}^{-1}\left(y\right)\right\Vert _{F}\nonumber \\
 & \equiv\left\Vert \dfrac{\mathrm{adj}\left({\bf C}_{t}\left(x\right)\right)}{\det\left({\bf C}_{t}\left(x\right)\right)}-\dfrac{\mathrm{adj}\left({\bf C}_{t}\left(y\right)\right)}{\det\left({\bf C}_{t}\left(y\right)\right)}\right\Vert _{F}\nonumber \\
 & \le\dfrac{\left\Vert \mathrm{adj}\left({\bf C}_{t}\left(x\right)\right)-\mathrm{adj}\left({\bf C}_{t}\left(y\right)\right)\right\Vert _{F}}{\det\left({\bf C}_{t}\left(x\right)\right)}+\left\Vert \mathrm{adj}\left({\bf C}_{t}\left(y\right)\right)\right\Vert _{F}\dfrac{\left|\det\left({\bf C}_{t}\left(x\right)\right)-\det\left({\bf C}_{t}\left(y\right)\right)\right|}{\det\left({\bf C}_{t}\left(x\right)\right)\det\left({\bf C}_{t}\left(y\right)\right)},\label{eq:Elem_Lemma3_1}
\end{flalign}
where $\mathrm{adj}\left({\bf A}\right)$ denotes the adjugate of
the square matrix ${\bf A}$. Since ${\bf C}_{t}\left(x\right)$ (resp.
${\bf C}_{t}\left(y\right)$) is a symmetric and positive definite
matrix, so is its adjugate. Employing one more property regarding
the eigenvalues of the adjugate \cite{SharifiADJUGATE2011} and the
fact that $\lambda_{inf}>1$, we can write
\begin{flalign}
\left\Vert \mathrm{adj}\left({\bf C}_{t}\left(y\right)\right)\right\Vert _{F} & \le\sqrt{N}\left\Vert \mathrm{adj}\left({\bf C}_{t}\left(y\right)\right)\right\Vert _{2}\nonumber \\
 & \equiv\sqrt{N}\lambda_{max}\left(\mathrm{adj}\left({\bf C}_{t}\left(y\right)\right)\right)\nonumber \\
 & \equiv\sqrt{N}\max_{i\in\mathbb{N}_{N}^{+}}\prod_{j\neq i}\lambda_{j}\left({\bf C}_{t}\left(y\right)\right)\nonumber \\
 & \le\sqrt{N}\det\left({\bf C}_{t}\left(y\right)\right),
\end{flalign}
and then (\ref{eq:Elem_Lemma3_1}) becomes
\begin{align}
\left\Vert {\bf C}_{t}^{-1}\left(x\right)-{\bf C}_{t}^{-1}\left(y\right)\right\Vert _{F} & \le\dfrac{\left\Vert \mathrm{adj}\left({\bf C}_{t}\left(x\right)\right)-\mathrm{adj}\left({\bf C}_{t}\left(y\right)\right)\right\Vert _{F}}{\det\left({\bf C}_{t}\left(x\right)\right)}+\sqrt{N}\dfrac{\left|\det\left({\bf C}_{t}\left(x\right)\right)-\det\left({\bf C}_{t}\left(y\right)\right)\right|}{\det\left({\bf C}_{t}\left(x\right)\right)}\nonumber \\
 & \le\dfrac{\left\Vert \mathrm{adj}\left({\bf C}_{t}\left(x\right)\right)-\mathrm{adj}\left({\bf C}_{t}\left(y\right)\right)\right\Vert _{F}}{\lambda_{inf}^{N}}+\dfrac{N^{3}K_{DET}K_{\boldsymbol{\Sigma}}}{\lambda_{inf}^{N}}\left|x-y\right|.\label{eq:Elem_Lemma3_2}
\end{align}
Next, the numerator of the first fraction from the left may be expressed
as
\begin{flalign}
\left\Vert \mathrm{adj}\left({\bf C}_{t}\left(x\right)\right)-\mathrm{adj}\left({\bf C}_{t}\left(y\right)\right)\right\Vert _{F} & \equiv\sqrt{\sum_{\left(i,j\right)\in\mathbb{N}_{N}^{+}\times\mathbb{N}_{N}^{+}}\left(\mathrm{adj}\left({\bf C}_{t}\left(x\right)\right)^{ij}-\mathrm{adj}\left({\bf C}_{t}\left(y\right)\right)^{ij}\right)^{2}}\nonumber \\
 & \equiv\sqrt{\sum_{\left(i,j\right)\in\mathbb{N}_{N}^{+}\times\mathbb{N}_{N}^{+}}\left(\left(-1\right)^{i+j}\left[{\cal M}_{ij}\left({\bf C}_{t}\left(x\right)\right)-{\cal M}_{ij}\left({\bf C}_{t}\left(y\right)\right)\right]\right)^{2}}\nonumber \\
 & \equiv\sqrt{\sum_{\left(i,j\right)\in\mathbb{N}_{N}^{+}\times\mathbb{N}_{N}^{+}}\left({\cal M}_{ij}\left({\bf C}_{t}\left(x\right)\right)-{\cal M}_{ij}\left({\bf C}_{t}\left(y\right)\right)\right)^{2}},
\end{flalign}
where ${\cal M}_{ij}\left({\bf C}_{t}\left(x\right)\right)$ denotes
the $\left(i,j\right)$-th minor of ${\bf C}_{t}\left(x\right)$,
which constitutes the \textit{determinant} of the $\left(N-1\right)\times\left(N-1\right)$
matrix formulated by removing the $i$-th row and the $j$-th column
of ${\bf C}_{t}\left(x\right)$. Consequently, from Lemma \ref{Elementary_Corollary_2},
there exists a constant $K_{det}$, possibly dependent on $N$, such
that, $\forall t\in\mathbb{N}$,
\begin{align}
\left\Vert \mathrm{adj}\left({\bf C}_{t}\left(x\right)\right)-\mathrm{adj}\left({\bf C}_{t}\left(y\right)\right)\right\Vert _{F} & \le\sqrt{\sum_{\left(i,j\right)\in\mathbb{N}_{N}^{+}\times\mathbb{N}_{N}^{+}}N^{4}K_{det}^{2}K_{\boldsymbol{\Sigma}}^{2}\left|x-y\right|^{2}},
\end{align}
or, equivalently,
\begin{align}
\left\Vert \mathrm{adj}\left({\bf C}_{t}\left(x\right)\right)-\mathrm{adj}\left({\bf C}_{t}\left(y\right)\right)\right\Vert _{F} & \le N^{3}K_{det}K_{\boldsymbol{\Sigma}}\left|x-y\right|,
\end{align}
$\forall\left(x,y\right)\in{\cal Z}\times{\cal Z}.$ Therefore, combining
with (\ref{eq:Elem_Lemma3_2}), we get
\begin{flalign}
\left\Vert {\bf C}_{t}^{-1}\left(x\right)-{\bf C}_{t}^{-1}\left(y\right)\right\Vert _{F} & \le\dfrac{N^{3}}{\lambda_{inf}^{N}}\left(K_{DET}+K_{det}\right)K_{\boldsymbol{\Sigma}}\left|x-y\right|\nonumber \\
 & \le\dfrac{27\lambda_{inf}^{-3/\log\left(\lambda_{inf}\right)}}{\left(\log\left(\lambda_{inf}\right)\right)^{3}}\left(K_{DET}+K_{det}\right)K_{\boldsymbol{\Sigma}}\left|x-y\right|\nonumber \\
 & \triangleq K_{INV}\left|x-y\right|,
\end{flalign}
and the proof is complete.
\end{proof}
Next, we state the following simple probabilistic result, related
to the expansiveness of the norm of the observation vector in a stochastic
sense, under both base measures ${\cal P}$ and $\widetilde{{\cal P}}$
considered throughout the paper (see Section II.B).
\begin{lem}
\label{Lemma_WHP}Consider the random quadratic form
\begin{equation}
Q_{t}\left(\omega\right)\triangleq\left\Vert {\bf y}_{t}\left(\omega\right)\right\Vert _{2}^{2}\equiv\left\Vert \overline{{\bf y}}_{t}\left(X_{t}\left(\omega\right)\right)+\boldsymbol{\mu}_{t}\left(X_{t}\left(\omega\right)\right)\right\Vert _{2}^{2},\quad t\in\mathbb{N}.
\end{equation}
Then, for any fixed $t\in\mathbb{N}$ and any freely chosen $C\ge1$,
there exists a bounded constant $\gamma>1$, such that the measurable
set
\begin{equation}
{\cal T}_{t}\triangleq\left\{ \omega\in\Omega\left|\sup_{i\in\mathbb{N}_{t}}Q_{i}\left(\omega\right)<\gamma CN\left(1+\log\left(t+1\right)\right)\right.\right\} 
\end{equation}
satisfies
\begin{equation}
\min\left\{ {\cal P}\left({\cal T}_{t}\right),\widetilde{{\cal P}}\left({\cal T}_{t}\right)\right\} \ge1-\dfrac{\exp\left(-CN\right)}{\left(t+1\right)^{CN-1}},
\end{equation}
that is, the sequence of quadratic forms $\left\{ Q_{i}\left(\omega\right)\right\} _{i\in\mathbb{N}_{t}}$
is uniformly bounded with very high probability under both base measures
${\cal P}$ and $\widetilde{{\cal P}}$.\end{lem}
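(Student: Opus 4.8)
The plan is to reduce the claim, under each of the two measures, to a union bound over $t+1$ Chernoff-type tail estimates for $\chi^{2}$ random variables with $N$ degrees of freedom, and to show that the logarithmic inflation of the threshold, through the factor $1+\log\left(t+1\right)$, is \emph{precisely} what converts the $t+1$ loss of the union bound into the power $\left(t+1\right)^{1-CN}$ appearing in the asserted probability. The arithmetic identity driving everything is
\begin{equation}
\dfrac{\exp\left(-CN\right)}{\left(t+1\right)^{CN-1}}\equiv\left(t+1\right)\exp\left(-CN\left(1+\log\left(t+1\right)\right)\right),
\end{equation}
so it suffices to exhibit a bounded $\gamma>1$ such that, for each fixed $i\in\mathbb{N}_{t}$,
\begin{equation}
\max\left\{ {\cal P},\widetilde{{\cal P}}\right\} \left(Q_{i}\ge\gamma CN\left(1+\log\left(t+1\right)\right)\right)\le\exp\left(-CN\left(1+\log\left(t+1\right)\right)\right),
\end{equation}
and then to sum these $t+1$ bounds; note that no independence among the $Q_{i}$ is needed, only their marginal laws.

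For $\widetilde{{\cal P}}$ the tail estimate is immediate: by Theorem \ref{thm:COMCOM} the process ${\bf y}_{i}$ is there a standard Gaussian white noise, so $Q_{i}\equiv\left\Vert {\bf y}_{i}\right\Vert _{2}^{2}$ is $\chi^{2}$ with $N$ degrees of freedom, and the Chernoff bound coming from the moment generating function $\left(1-2\lambda\right)^{-N/2}$, evaluated at $\lambda\equiv1/4$, gives $\widetilde{{\cal P}}\left(Q_{i}\ge a\right)\le2^{N/2}\exp\left(-a/4\right)$ for all $a>0$. For ${\cal P}$, I would pass to the canonical form ${\bf y}_{i}\equiv\boldsymbol{\mu}_{i}\left(X_{i}\right)+\sqrt{{\bf C}_{i}\left(X_{i}\right)}\boldsymbol{u}_{i}$, use $\left\Vert {\bf a}+{\bf b}\right\Vert _{2}^{2}\le2\left\Vert {\bf a}\right\Vert _{2}^{2}+2\left\Vert {\bf b}\right\Vert _{2}^{2}$, and invoke Assumption 1 in the essential sense (in particular $\lambda_{max}\left({\bf C}_{i}\left(X_{i}\right)\right)\le\lambda_{sup}$) to get, ${\cal P}$-almost surely,
\begin{equation}
Q_{i}\le2\left\Vert \boldsymbol{\mu}_{i}\left(X_{i}\right)\right\Vert _{2}^{2}+2\boldsymbol{u}_{i}^{\boldsymbol{T}}{\bf C}_{i}\left(X_{i}\right)\boldsymbol{u}_{i}\le2\mu_{sup}^{2}+2\lambda_{sup}\left\Vert \boldsymbol{u}_{i}\right\Vert _{2}^{2},
\end{equation}
where $\left\Vert \boldsymbol{u}_{i}\right\Vert _{2}^{2}$ is again $\chi^{2}$ with $N$ degrees of freedom, $\boldsymbol{u}_{i}$ being marginally ${\cal N}\left({\bf 0},{\bf I}_{N\times N}\right)$ by definition of the standardized noise. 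The same Chernoff estimate then bounds the ${\cal P}$-tail of $Q_{i}$, with constants depending only on the finite quantities $\mu_{sup},\lambda_{sup}$ of Assumption 1.

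It remains to calibrate $\gamma$. Writing $s\triangleq CN\left(1+\log\left(t+1\right)\right)$ and using $s\ge N\ge1$ (because $C\ge1$), one absorbs the additive constant via $2\mu_{sup}^{2}\le2\mu_{sup}^{2}s$ and the dimension factor via $2^{N/2}\le\exp\left(\left(\log2\right)s/2\right)$, so that for $\gamma\ge4\mu_{sup}^{2}$ the ${\cal P}$-probability that $Q_{i}\ge\gamma s$ is at most $\exp\left(-s\left(\gamma/\left(16\lambda_{sup}\right)-\left(\log2\right)/2\right)\right)$, which is $\le\exp\left(-s\right)$ as soon as $\gamma\ge16\lambda_{sup}\left(1+\left(\log2\right)/2\right)$; the corresponding condition for $\widetilde{{\cal P}}$ is strictly weaker since $\lambda_{sup}\ge\lambda_{inf}>1$. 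Thus any bounded $\gamma\ge\max\left\{ 4\mu_{sup}^{2}+1,\,16\lambda_{sup}\left(1+\log2\right)\right\} $ works and is automatically $>1$, a final union bound over $i\in\mathbb{N}_{t}$ produces the prefactor $t+1$, and the identity above turns $\left(t+1\right)\exp\left(-s\right)$ into the claimed lower bound on $\min\left\{ {\cal P}\left({\cal T}_{t}\right),\widetilde{{\cal P}}\left({\cal T}_{t}\right)\right\} $. The Chernoff steps and constant-chasing are routine; the one genuinely delicate point — and the one dictating the exact shape of ${\cal T}_{t}$ — is this threshold calibration, i.e. realizing that the union-bound factor $t+1$ must be paid for \emph{inside the exponent}, by a $\log\left(t+1\right)$ term, rather than by a polynomial prefactor, which is exactly why $1+\log\left(t+1\right)$ enters multiplicatively into the definition of ${\cal T}_{t}$.
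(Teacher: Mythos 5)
Your proof is correct and follows essentially the same route as the paper: reduce each $Q_{i}$ to a $\chi^{2}\left(N\right)$ tail via Assumption 1 (using $\mu_{sup}$ and $\lambda_{sup}$ to absorb the mean and the covariance), apply a concentration inequality, take a union bound over the $t+1$ indices, and calibrate $\gamma$ against the $\log\left(t+1\right)$-inflated threshold so that the union-bound factor is paid inside the exponent. The only differences are cosmetic: the paper invokes the Laurent--Massart inequality and an exact cross-term expansion of $\left\Vert \overline{{\bf y}}_{t}+\boldsymbol{\mu}_{t}\right\Vert _{2}^{2}$, arriving at $\gamma=5\lambda_{sup}\left(1+\mu_{sup}\right)^{2}$, whereas you use a direct Chernoff bound at $\lambda=1/4$ and the cruder $2\left\Vert {\bf a}\right\Vert _{2}^{2}+2\left\Vert {\bf b}\right\Vert _{2}^{2}$ inequality, yielding a larger but equally admissible constant.
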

\begin{proof}[Proof of Lemma \ref{Lemma_WHP}]
First, it is true that
\begin{flalign}
\left\Vert {\bf y}_{t}\left(\omega\right)\right\Vert _{2}^{2} & \equiv\left\Vert \overline{{\bf y}}_{t}\left(X_{t}\left(\omega\right)\right)+\boldsymbol{\mu}_{t}\left(X_{t}\left(\omega\right)\right)\right\Vert _{2}^{2}\nonumber \\
 & \equiv\left\Vert \overline{{\bf y}}_{t}\left(X_{t}\left(\omega\right)\right)\right\Vert _{2}^{2}+2{\bf y}_{t}^{\boldsymbol{T}}\left(X_{t}\left(\omega\right)\right)\boldsymbol{\mu}_{t}\left(X_{t}\left(\omega\right)\right)+\left\Vert \boldsymbol{\mu}_{t}\left(X_{t}\left(\omega\right)\right)\right\Vert _{2}^{2}\nonumber \\
 & \le\left\Vert \overline{{\bf y}}_{t}\left(X_{t}\left(\omega\right)\right)\right\Vert _{2}^{2}+2\left\Vert \overline{{\bf y}}_{t}\left(X_{t}\left(\omega\right)\right)\right\Vert _{2}\mu_{sup}+\mu_{sup}^{2}.
\end{flalign}
Also, under ${\cal P}$, for each $t\in\mathbb{N}$, the random variable
$\overline{{\bf y}}_{t}\left(X_{t}\right)$ constitutes an $N$-dimensional,
conditionally (on $X_{t}$) Gaussian random variable with zero mean
and covariance matrix ${\bf C}_{t}\left(X_{t}\right)$, that is
\begin{equation}
\overline{{\bf y}}_{t}\left|X_{t}\right.\sim{\cal N}\left(0,{\bf C}_{t}\left(X_{t}\right)\equiv{\bf C}_{\overline{{\bf y}}_{t}\left|X_{t}\right.}\right).
\end{equation}
Then, \textit{if $X_{t}$ is given}, 
\begin{equation}
\overline{Q}_{t}\left(\omega\right)\triangleq\left\Vert \overline{{\bf y}}_{t}\left(X_{t}\left(\omega\right)\right)\right\Vert _{2}^{2}
\end{equation}
can be shown to admit the very useful alternative representation (for
instance, see \cite{Mathai1992quadratic}, pp. 89 - 90)
\begin{gather}
\overline{Q}_{t}\equiv\sum_{j\in\mathbb{N}_{N}^{+}}\lambda_{j}\left({\bf C}_{t}\left(X_{t}\right)\right)U_{j}^{2},\quad\forall t\in\mathbb{N},\quad\text{with}\label{eq:Q_Representation-1}\\
\left\{ U_{j}\right\} _{j\in\mathbb{N}_{N}^{+}}\overset{i.i.d.}{\sim}{\cal N}\left(0,1\right).
\end{gather}
From (\ref{eq:Q_Representation-1}), one can readily observe that
the statistical dependence of $\overline{Q}_{t}$ on $X_{t}$ concentrates
only on the eigenvalues of the covariance matrix ${\bf C}_{t}\left(X_{t}\right)$,
for which we have already assumed the existence of a finite supremum
explicitly (see Assumption 1). Consequently, \textit{conditioning
on the process $X_{t}$}, we can bound (\ref{eq:Q_Representation-1})
as
\begin{equation}
\overline{Q}_{t}\le\lambda_{sup}\sum_{j\in\mathbb{N}_{N}^{+}}U_{j}^{2}\triangleq\lambda_{sup}U,\quad\text{with}\quad U\sim\chi^{2}\left(N\right),
\end{equation}
almost everywhere and everywhere in time, where the RHS is independent
of $X_{t}$. Next, from (\cite{LaurentMassart2000}, p. 1325), we
know that for any chi squared random variable $U$ with $N$ degrees
of freedom,
\begin{equation}
{\cal P}\left(U\ge N+2\sqrt{Nu}+2u\right)\le\exp\left(-u\right),\quad\forall u>0.
\end{equation}
Setting $u\equiv CN\left(1+\log\left(t+1\right)\right)$ for any $C\ge1$
and any $t\in\mathbb{N}$,
\begin{align}
{\cal P}\left(\vphantom{\sqrt{C\left(1+\log\left(t+1\right)\right)}}U\ge N+2N\sqrt{C\left(1+\log\left(t+1\right)\right)}\right.+\left.2CN\left(1+\log\left(t+1\right)\right)\vphantom{\sqrt{C\left(1+\log\left(t+1\right)\right)}}\right) & \le\dfrac{\exp\left(-CN\right)}{\left(t+1\right)^{CN}}.
\end{align}
a statement which equivalently means that, with probability at least
$1-\left(t+1\right)^{-CN}\exp\left(-CN\right)$,
\begin{align}
U & <N+2N\sqrt{C\left(1+\log\left(t+1\right)\right)}+2CN\left(1+\log\left(t+1\right)\right).
\end{align}
However, because the RHS of the above inequality is upper bounded
by $5CN\left(1+\log\left(t+1\right)\right)$, 
\begin{multline}
{\cal P}\left(U<5CN\left(1+\log\left(t+1\right)\right)\right)\\
\ge{\cal P}\left(\vphantom{\sqrt{C\left(1+\log\left(t+1\right)\right)}}U<N+2N\sqrt{C\left(1+\log\left(t+1\right)\right)}\right.+\left.2CN\left(1+\log\left(t+1\right)\right)\vphantom{\sqrt{C\left(1+\log\left(t+1\right)\right)}}\right)\ge1-\dfrac{\exp\left(-CN\right)}{\left(t+1\right)^{CN}}.
\end{multline}
Hence, $\forall i\in\mathbb{N}_{t}$,
\begin{flalign}
{\cal P}\left(\left.\overline{Q}_{i}\ge5\lambda_{sup}CN\left(1+\log\left(t+1\right)\right)\right|X_{i}\right) & \le{\cal P}\left(U\ge5CN\left(1+\log\left(t+1\right)\right)\right)\nonumber \\
 & \le\dfrac{\exp\left(-CN\right)}{\left(t+1\right)^{CN}},
\end{flalign}
and, thus,
\begin{flalign}
{\cal P}\left(\overline{Q}_{i}\ge5\lambda_{sup}CN\left(1+\log\left(t+1\right)\right)\right) & =\int{\cal P}\left(\left.\overline{Q}_{i}\ge5\lambda_{sup}CN\left(1+\log\left(t+1\right)\right)\right|X_{i}\right)\mathrm{d}{\cal P}_{X_{i}}\nonumber \\
 & \le\dfrac{\exp\left(-CN\right)}{\left(t+1\right)^{CN}}\int\mathrm{d}{\cal P}_{X_{i}}\equiv\dfrac{\exp\left(-CN\right)}{\left(t+1\right)^{CN}}.
\end{flalign}
However, we would like to produce a bound on the supremum of all the
$Q_{i},i\in\mathbb{N}_{t}$. Indeed, using the naive union bound,
\begin{flalign*}
{\cal P}\left(\bigcup_{i\in\mathbb{N}_{t}}\left\{ \overline{Q}_{i}\ge5\lambda_{sup}CN\left(1+\log\left(t+1\right)\right)\right\} \right) & \le\sum_{i\in\mathbb{N}_{t}}{\cal P}\left(\overline{Q}_{i}\ge\lambda_{sup}5CN\left(1+\log\left(t+1\right)\right)\right)\\
 & \le\dfrac{\left(t+1\right)\exp\left(-CN\right)}{\left(t+1\right)^{CN}}\equiv\dfrac{\exp\left(-CN\right)}{\left(t+1\right)^{CN-1}}
\end{flalign*}
or, equivalently, 
\begin{flalign}
{\cal P}\left(\sup_{i\in\mathbb{N}_{t}}\overline{Q}_{i}<5\lambda_{sup}CN\left(1+\log\left(t+1\right)\right)\right) & \equiv{\cal P}\left(\left\{ \overline{Q}_{i}<5\lambda_{sup}CN\left(1+\log\left(t+1\right)\right),\forall i\in\mathbb{N}_{t}\right\} \right)\nonumber \\
 & \equiv{\cal P}\left(\bigcap_{i\in\mathbb{N}_{t}}\left\{ \overline{Q}_{i}<5\lambda_{sup}CN\left(1+\log\left(t+1\right)\right)\right\} \right)\nonumber \\
 & \ge1-\dfrac{\exp\left(-CN\right)}{\left(t+1\right)^{CN-1}},
\end{flalign}
holding true $\forall t\in\mathbb{N}$. Consequently, working in the
same fashion as above, it is true that, with at least the same probability
of success,
\begin{flalign}
\sup_{i\in\mathbb{N}_{t}}Q_{i}\left(\omega\right) & <5\lambda_{sup}CN\left(1+\log\left(t+1\right)\right)+2\sqrt{5\lambda_{sup}CN\left(1+\log\left(t+1\right)\right)}\mu_{sup}+\mu_{sup}^{2}\nonumber \\
 & <5\lambda_{sup}\left(1+2\mu_{sup}+\mu_{sup}^{2}\right)CN\left(1+\log\left(t+1\right)\right)
\end{flalign}
or, setting $\gamma_{1}\triangleq5\lambda_{sup}\left(1+\mu_{sup}\right)^{2}>1$,
\begin{equation}
\sup_{i\in\mathbb{N}_{t}}Q_{i}\left(\omega\right)<\gamma_{1}CN\left(1+\log\left(t+1\right)\right).
\end{equation}

Now, under the alternative base measure $\widetilde{{\cal P}}$, ${\bf y}_{t}$
constitutes a Gaussian vector white noise process with zero mean and
covariance matrix the identity, statistically independent of the process
$X_{t}$ (see Theorem 2). That is, for each $t$, the elements of
${\bf y}_{t}$ are themselves independent to each other. Thus, for
all $t\in\mathbb{N}$ and for all $i\in\mathbb{N}_{t}$ and using
similar arguments as the ones made above, it should be true that 
\begin{equation}
\widetilde{{\cal P}}\left(Q_{i}<5CN\left(1+\log\left(t+1\right)\right)\right)\ge1-\dfrac{\exp\left(-CN\right)}{\left(t+1\right)^{CN}}
\end{equation}
and taking the union bound, we end up with the inequality
\begin{align}
\widetilde{{\cal P}}\left(\sup_{i\in\mathbb{N}_{t}}Q_{i}<5CN\left(1+\log\left(t+1\right)\right)\right) & \ge1-\dfrac{\exp\left(-CN\right)}{\left(t+1\right)^{CN-1}}.
\end{align}
Defining $\gamma\triangleq\max\left\{ \gamma_{1},5\right\} \equiv\gamma_{1}$,
it must be true that, for all $t\in\mathbb{N}$, 
\begin{multline}
\min\left\{ {\cal P}\left(\sup_{i\in\mathbb{N}_{t}}Q_{i}<\gamma CN\left(1+\log\left(t+1\right)\right)\right),\widetilde{{\cal P}}\left(\sup_{i\in\mathbb{N}_{t}}Q_{i}<\gamma CN\left(1+\log\left(t+1\right)\right)\right)\right\} \\
\ge1-\dfrac{\exp\left(-CN\right)}{\left(t+1\right)^{CN-1}},
\end{multline}
therefore completing the proof of the lemma.
\end{proof}
Continuing our presentation of preliminary results towards the proof
of Theorem (\ref{CONVERGENCE_THEOREM}) and leveraging the power of
${\cal C}$-weak convergence and Lemma \ref{Portmanteau-1}, let us
present the following lemma, connecting ${\cal C}$-weak convergence
of random variables with convergence in the ${\cal L}_{1}$ sense.
\begin{lem}
\noindent \label{L1WEAK_Lemma}\textbf{\textup{(From ${\cal C}$-Weak
Convergence to Convergence in ${\cal L}_{1}$)}} Consider the sequence
of discrete time stochastic processes $\left\{ X_{t}^{\mathtt{A}}\right\} _{\mathtt{A}\in\mathbb{N}}$,
as well as a ``limit'' process $X_{t}$,$t\in\mathbb{N}$, all being
$\left(\mathbb{R},\mathscr{S}\triangleq\mathscr{B}\left(\mathbb{R}\right)\right)$-valued
and all defined on a common base space $\left(\Omega,\mathscr{F},{\cal P}\right)$.
Further, suppose that all members of the collection $\left\{ \left\{ X_{t}^{\mathtt{A}}\right\} _{\mathtt{A}\in\mathbb{N}},X_{t}\right\} _{t\in\mathbb{N}}$
are almost surely bounded in ${\cal Z}\equiv\left[a,b\right]$ (with
$-\infty<a<b<\infty$) and that
\begin{equation}
{\cal P}_{\left.X_{t}^{\mathtt{A}}\right|X_{t}}^{\mathtt{A}}\left(\left.\cdot\right|X_{t}\right)\stackrel[\mathtt{A}\rightarrow\infty]{{\cal W}}{\longrightarrow}\delta_{X_{t}}\left(\cdot\right)\equiv\mathds{1}_{\left(\cdot\right)}\left(X_{t}\right),\quad\forall t\in\mathbb{N},\label{eq:C_weak_COND}
\end{equation}
that is, the sequence $\left\{ X_{t}^{\mathtt{A}}\right\} _{\mathtt{A}\in\mathbb{N}}$
is marginally ${\cal C}$-weakly convergent to $X_{t}$, given $X_{t}$,
for all $t$. Then, it is true that
\begin{equation}
\mathbb{E}\left\{ \left|X_{t}-X_{t}^{\mathtt{A}}\right|\right\} \underset{\mathtt{A}\rightarrow\infty}{\longrightarrow}0,\quad\forall t\in\mathbb{N},
\end{equation}
or, equivalently, $X_{t}^{\mathtt{A}}\stackrel[\mathtt{A}\rightarrow\infty]{{\cal L}_{1}}{\longrightarrow}X_{t}$,
for all $t$.\end{lem}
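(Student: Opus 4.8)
The plan is to condition on the state, invoke the $\mathcal{C}$-weak convergence hypothesis through Lemma \ref{Portmanteau-1}, and then pass the limit through the outer expectation by bounded convergence. Fix $t\in\mathbb{N}$. By the tower property,
\begin{equation}
\mathbb{E}\left\{\left|X_{t}-X_{t}^{\mathtt{A}}\right|\right\}\equiv\mathbb{E}\left\{\mathbb{E}\left\{\left.\left|X_{t}-X_{t}^{\mathtt{A}}\right|\,\right|X_{t}\right\}\right\},
\end{equation}
so it is enough to show that the inner conditional expectation, regarded as a function of $\omega$, tends to $0$ for $\mathcal{P}$-a.e.\ $\omega$, and then to interchange limit and outer expectation. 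The interchange is immediate from the bounded (dominated) convergence theorem: since both processes lie in $\mathcal{Z}\equiv[a,b]$ almost surely, $\left|X_{t}-X_{t}^{\mathtt{A}}\right|\le b-a$ a.s., hence $\mathbb{E}\{|X_{t}-X_{t}^{\mathtt{A}}|\,|\,X_{t}\}\le b-a$ uniformly in $\mathtt{A}$ and $\omega$, which supplies the required integrable dominating function.

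For the almost everywhere convergence of the conditional expectation I would work with the square, to keep the test functions free of $\omega$. Writing $\left|X_{t}-X_{t}^{\mathtt{A}}\right|^{2}\equiv X_{t}^{2}-2X_{t}X_{t}^{\mathtt{A}}+\left(X_{t}^{\mathtt{A}}\right)^{2}$ and using that $X_{t}$ is $\sigma\{X_{t}\}$-measurable,
\begin{equation}
\mathbb{E}\left\{\left.\left|X_{t}-X_{t}^{\mathtt{A}}\right|^{2}\,\right|X_{t}\right\}\equiv X_{t}^{2}-2X_{t}\,\mathbb{E}\left\{\left.X_{t}^{\mathtt{A}}\,\right|X_{t}\right\}+\mathbb{E}\left\{\left.\left(X_{t}^{\mathtt{A}}\right)^{2}\,\right|X_{t}\right\}.
\end{equation}
The maps $u\mapsto u$ and $u\mapsto u^{2}$ are bounded and continuous on the compact set $\mathcal{Z}$, so Lemma \ref{Portmanteau-1} in its $\mathcal{C}$-weak form (the version stated right after its proof) applied to the hypothesis \eqref{eq:C_weak_COND} gives, for $\mathcal{P}$-a.e.\ $\omega$,
\begin{equation}
\mathbb{E}\left\{\left.X_{t}^{\mathtt{A}}\,\right|X_{t}\right\}\left(\omega\right)\underset{\mathtt{A}\rightarrow\infty}{\longrightarrow}X_{t}\left(\omega\right)\quad\text{and}\quad\mathbb{E}\left\{\left.\left(X_{t}^{\mathtt{A}}\right)^{2}\,\right|X_{t}\right\}\left(\omega\right)\underset{\mathtt{A}\rightarrow\infty}{\longrightarrow}X_{t}\left(\omega\right)^{2}.
\end{equation}
Hence $\mathbb{E}\{|X_{t}-X_{t}^{\mathtt{A}}|^{2}\,|\,X_{t}\}(\omega)\to X_{t}(\omega)^{2}-2X_{t}(\omega)^{2}+X_{t}(\omega)^{2}=0$ for $\mathcal{P}$-a.e.\ $\omega$, and by Jensen's inequality the same holds for $\mathbb{E}\{|X_{t}-X_{t}^{\mathtt{A}}|\,|\,X_{t}\}(\omega)$. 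Combining with the bounded convergence argument of the first paragraph yields $\mathbb{E}\{|X_{t}-X_{t}^{\mathtt{A}}|\}\to 0$, and since $t$ was arbitrary the lemma follows.

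The only genuinely delicate point, and the reason I would route the argument through the square, is that applying the $\mathcal{C}$-weak Portmanteau lemma requires test functions independent of $\omega$: working directly with $u\mapsto|X_{t}(\omega)-u|$ needs the $\mathcal{P}$-null exceptional set to be valid simultaneously for the uncountable family of shifts $\{|v-\cdot|:v\in[a,b]\}$. One can arrange this by a standard separability argument — fix a countable dense set of rational shifts, union the (countably many) corresponding null sets, and use that every such function is $1$-Lipschitz, so $\int|v-u|\,\mu(\mathrm{d}u)$ changes by at most $|v-v'|$ when $v$ is replaced by $v'$ — but the quadratic expansion above avoids it altogether at the cost of one invocation of Jensen. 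I expect this bookkeeping about exceptional sets (equivalently, the decision to expand $|X_{t}-X_{t}^{\mathtt{A}}|^{2}$) to be the main, and essentially the only, obstacle; the rest is routine.
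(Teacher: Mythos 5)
Your proof is correct, and its skeleton (tower property, almost-everywhere convergence of the inner conditional expectation, then dominated/bounded convergence for the outer one) is the same as the paper's. The one genuinely different step is how the inner convergence is obtained. The paper does exactly what you flag as delicate: it introduces the clipping function $f(x)=x$ on $[0,2\delta]$, $2\delta$ above, $0$ below (with $\delta\triangleq\max\{|a|,|b|\}$), and applies the conditional Portmanteau statement directly to the $\omega$-dependent test function $x\mapsto f\left(\left|X_{t}\left(\omega\right)-x\right|\right)$, obtaining $\int f\left(\left|X_{t}\left(\omega\right)-x\right|\right){\cal P}_{\left.X_{t}^{\mathtt{A}}\right|X_{t}}^{\mathtt{A}}\left(\mathrm{d}x\left|X_{t}\left(\omega\right)\right.\right)\rightarrow 0$ a.e.; this is legitimate under the usual reading of ${\cal C}$-weak convergence as \emph{almost sure weak convergence} (a single null set off which the conditional laws converge weakly, so that for each fixed $\omega$ every bounded continuous test function is admissible), but the paper does not make the exceptional-set bookkeeping explicit. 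You sidestep the issue entirely by expanding $\left|X_{t}-X_{t}^{\mathtt{A}}\right|^{2}$ into terms involving only the two fixed test functions $u\mapsto u$ and $u\mapsto u^{2}$ (truncated off ${\cal Z}$ to be bounded, which is harmless since the conditional laws are supported there a.s.) and then applying conditional Jensen. Your route costs one extra inequality but makes the null-set argument trivial; the paper's is more direct but leans on the stronger (single-null-set) interpretation of Definition \ref{Def_COND_WEAK}. Both are valid proofs of the lemma.
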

\begin{proof}[Proof of Lemma \ref{L1WEAK_Lemma}]
Let all the hypotheses of Lemma \ref{L1WEAK_Lemma} hold true. Then,
we know that, $\forall t\in\mathbb{N}$,
\begin{equation}
{\displaystyle \lim_{n\rightarrow\infty}}{\cal P}_{\left.X_{t}^{\mathtt{A}}\right|X_{t}}^{\mathtt{A}}\left(\left.{\cal A}\right|X_{t}\left(\omega\right)\right)=\delta_{X_{t}\left(\omega\right)}\left({\cal A}\right),\quad{\cal P}-a.e.,
\end{equation}
for all continuity Borel sets ${\cal A}\in\mathscr{S}$. Using the
tower property, it is also true that
\begin{equation}
\mathbb{E}\left\{ \left|X_{t}-X_{t}^{\mathtt{A}}\right|\right\} \equiv\mathbb{E}\left\{ \mathbb{E}\left\{ \left.\left|X_{t}-X_{t}^{\mathtt{A}}\right|\right|\sigma\left\{ X_{t}\right\} \right\} \right\} .
\end{equation}
Therefore, in order to show that $X_{t}^{\mathtt{A}}\stackrel[\mathtt{A}\rightarrow\infty]{{\cal L}_{1}}{\longrightarrow}X_{t}$
for each $t\in\mathbb{N}$, it suffices to show that
\begin{equation}
\mathbb{E}\left\{ \left.\left|X_{t}-X_{t}^{\mathtt{A}}\right|\right|\sigma\left\{ X_{t}\right\} \right\} \left(\omega\right)\stackrel[\mathtt{A}\rightarrow\infty]{a.s.}{\longrightarrow}0,\quad\forall t\in\mathbb{N}.
\end{equation}
Then, the Dominated Convergence Theorem would produce the desired
result.

Of course, because all members of the collection $\left\{ \left\{ X_{t}^{\mathtt{A}}\right\} _{\mathtt{A}\in\mathbb{N}},X_{t}\right\} _{t\in\mathbb{N}}$
are almost surely bounded in ${\cal Z}$, all members of the collection
$\left\{ \left\{ \left|X_{t}-X_{t}^{\mathtt{A}}\right|\right\} _{\mathtt{A}\in\mathbb{N}}\right\} _{t\in\mathbb{N}}$
must be bounded almost surely in the compact set ${\cal \widehat{Z}}\triangleq\left[0,2\delta\right]\subset\mathbb{R}$,
where $\delta\triangleq\max\left\{ \left|a\right|,\left|b\right|\right\} $.

Let us define the continuous and bounded function
\begin{equation}
f\left(x\right)\triangleq\begin{cases}
x, & \text{if }x\in{\cal \widehat{Z}}\\
2\delta, & \text{if }x>2\delta\\
0, & \text{if }x<0
\end{cases}.
\end{equation}
Then, from Lemma \ref{Portmanteau-1} and using conditional probability
measures it must be true that for each $t\in\mathbb{N}$, a version
of the conditional expectation of interest is explicitly given by
\begin{flalign}
\mathbb{E}\left\{ \left.f\left(\left|X_{t}-X_{t}^{\mathtt{A}}\right|\right)\right|\sigma\left\{ X_{t}\right\} \right\} \left(\omega\right) & \equiv\int f\left(\left|X_{t}\left(\omega\right)-x\right|\right){\cal P}_{\left.X_{t}^{\mathtt{A}}\right|X_{t}}^{\mathtt{A}}\left(\mathrm{d}x\left|X_{t}\left(\omega\right)\right.\right)\underset{\mathtt{A}\rightarrow\infty}{\longrightarrow}\nonumber \\
 & \underset{\mathtt{A}\rightarrow\infty}{\longrightarrow}\int f\left(\left|X_{t}\left(\omega\right)-x\right|\right)\delta_{X_{t}\left(\omega\right)}\left(\mathrm{d}x\right)\equiv0,\quad{\cal P}-a.e.,
\end{flalign}
since, for each $\omega\in\Omega$, $X_{t}\left(\omega\right)$ is
constant. Further, by definition of $f$, 
\begin{flalign}
\mathbb{E}\left\{ \left.f\left(\left|X_{t}-X_{t}^{\mathtt{A}}\right|\right)\right|\sigma\left\{ X_{t}\right\} \right\} \left(\omega\right) & \equiv\mathbb{E}\left\{ \left.\left|X_{t}-X_{t}^{\mathtt{A}}\right|\mathds{1}_{\left(\left|X_{t}-X_{t}^{\mathtt{A}}\right|\right)}\left({\cal \widehat{Z}}\right)\right|\sigma\left\{ X_{t}\right\} \right\} \left(\omega\right)\nonumber \\
 & \equiv\mathbb{E}\left\{ \left.\left|X_{t}-X_{t}^{\mathtt{A}}\right|\right|\sigma\left\{ X_{t}\right\} \right\} \left(\omega\right),\quad{\cal P}-a.e.,
\end{flalign}
and for all $t\in\mathbb{N}$, which means that
\begin{equation}
\mathbb{E}\left\{ \left.\left|X_{t}-X_{t}^{\mathtt{A}}\right|\right|\sigma\left\{ X_{t}\right\} \right\} \left(\omega\right)\stackrel[\mathtt{A}\rightarrow\infty]{a.s.}{\longrightarrow}0,\quad\forall t\in\mathbb{N}.
\end{equation}
Calling dominated convergence proves the result.
\end{proof}
Additionally, the following useful (to us) result is also true. The
proof, being elementary, is omitted.
\begin{lem}
\label{L1WEAK_LEMMA}\textbf{\textup{(Convergence of the Supremum)}}
Pick any natural $T<\infty$. If, under any circumstances,
\begin{equation}
\mathbb{E}\left\{ \left|X_{t}-X_{t}^{\mathtt{A}}\right|\right\} \underset{\mathtt{A}\rightarrow\infty}{\longrightarrow}0,\quad\forall t\in\mathbb{N}_{T},
\end{equation}
 then
\begin{equation}
\sup_{t\in\mathbb{N}_{T}}\mathbb{E}\left\{ \left|X_{t}-X_{t}^{\mathtt{A}}\right|\right\} \underset{\mathtt{A}\rightarrow\infty}{\longrightarrow}0.
\end{equation}

\end{lem}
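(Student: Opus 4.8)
The plan is to exploit the fact that the index set $\mathbb{N}_{T}\equiv\{0,1,\ldots,T\}$ is \emph{finite}, so that the supremum in the conclusion is really a maximum over finitely many real sequences, each converging to zero by hypothesis. First I would fix an arbitrary $\varepsilon>0$. For each $t\in\mathbb{N}_{T}$, the assumed convergence $\mathbb{E}\{|X_{t}-X_{t}^{\mathtt{A}}|\}\to0$ furnishes an index $\mathtt{A}_{t}(\varepsilon)\in\mathbb{N}$ such that $\mathbb{E}\{|X_{t}-X_{t}^{\mathtt{A}}|\}<\varepsilon$ for every $\mathtt{A}\ge\mathtt{A}_{t}(\varepsilon)$.

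Next I would set $\mathtt{A}^{\star}(\varepsilon)\triangleq\max_{t\in\mathbb{N}_{T}}\mathtt{A}_{t}(\varepsilon)$, which is well defined and finite precisely because the maximum is taken over the finite collection $\{0,1,\ldots,T\}$. Then, for every $\mathtt{A}\ge\mathtt{A}^{\star}(\varepsilon)$ and simultaneously for every $t\in\mathbb{N}_{T}$, we have $\mathtt{A}\ge\mathtt{A}_{t}(\varepsilon)$, hence $\mathbb{E}\{|X_{t}-X_{t}^{\mathtt{A}}|\}<\varepsilon$; passing to the supremum over $t\in\mathbb{N}_{T}$ on the left then yields $\sup_{t\in\mathbb{N}_{T}}\mathbb{E}\{|X_{t}-X_{t}^{\mathtt{A}}|\}\le\varepsilon$ (the strict bound may degrade to a non-strict one, which is harmless). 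Since $\varepsilon>0$ was arbitrary, this establishes $\sup_{t\in\mathbb{N}_{T}}\mathbb{E}\{|X_{t}-X_{t}^{\mathtt{A}}|\}\to0$ as $\mathtt{A}\to\infty$. An equivalent one-line phrasing of the same argument, which I might give instead, is $\limsup_{\mathtt{A}\to\infty}\sup_{t\in\mathbb{N}_{T}}\mathbb{E}\{|X_{t}-X_{t}^{\mathtt{A}}|\}\le\sum_{t\in\mathbb{N}_{T}}\limsup_{\mathtt{A}\to\infty}\mathbb{E}\{|X_{t}-X_{t}^{\mathtt{A}}|\}=0$, using that a finite sum of nonnegative terms dominates their finite maximum.

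There is essentially no obstacle here: the only ingredient beyond elementary manipulation of limits is the finiteness of $T$, which guarantees that $\mathtt{A}^{\star}(\varepsilon)$ is an honest natural number rather than a possibly infinite supremum. I would flag this explicitly, since it is exactly the place where the hypothesis $T<\infty$ is indispensable: for an infinite horizon, pointwise-in-$t$ convergence of the $\mathcal{L}_{1}$ errors does \emph{not} in general force their uniform-in-$t$ convergence, and an additional equicontinuity- or uniform-integrability-type condition would be required. This is consistent with the fact that in Theorem~\ref{CONVERGENCE_THEOREM} the horizon $T$ is always taken finite, and it is why the convergence there is only asserted to be compact in time.
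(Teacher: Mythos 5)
Your proof is correct: the paper explicitly omits the proof of this lemma as elementary, and your argument—using the finiteness of $\mathbb{N}_{T}$ to reduce the supremum to a maximum of finitely many null sequences, with $\mathtt{A}^{\star}(\varepsilon)\triangleq\max_{t\in\mathbb{N}_{T}}\mathtt{A}_{t}(\varepsilon)$ finite—is exactly the standard argument the authors intend. Your remark that the conclusion fails for an infinite horizon without extra uniformity assumptions correctly identifies why the convergence in Theorem \ref{CONVERGENCE_THEOREM} is only compact in time.
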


\subsection{The Key Lemma}

We are now ready to present our key lemma, which will play an important
role in establishing our main result (Theorem \ref{CONVERGENCE_THEOREM})
later on. For proving this result, we make use of all the intermediate
ones presented in the previous subsections.
\begin{lem}
\noindent \label{RD_Derivatives_Lemma}\textbf{\textup{(Convergence
of the Likelihoods)}} Consider the stochastic process
\begin{equation}
\widehat{\Lambda}_{t}\triangleq\dfrac{\exp\left(-\dfrac{1}{2}{\displaystyle \sum_{i\in\mathbb{N}_{t}}}\overline{{\bf y}}_{i}^{\boldsymbol{T}}\left(X_{i}\right){\bf C}_{i}^{-1}\left(X_{i}\right)\overline{{\bf y}}_{i}\left(X_{i}\right)\right)}{{\displaystyle \prod_{i\in\mathbb{N}_{t}}}\sqrt{\det\left({\bf C}_{i}\left(X_{i}\right)\right)}}\triangleq\dfrac{\mathfrak{N}_{t}}{\mathfrak{D}_{t}},\quad t\in\mathbb{N}.
\end{equation}
Consider also the process $\widehat{\Lambda}_{t}^{\mathtt{A}}\triangleq\mathfrak{N}_{t}^{\mathtt{A}}/\mathfrak{D}_{t}^{\mathtt{A}}$,
defined exactly the same way as $\widehat{\Lambda}_{t}$, but replacing
$X_{i}$ with the approximation $X_{i}^{\mathtt{A}}$, $\forall i\in\mathbb{N}_{t}$.
Further, pick any natural $T<\infty$ and suppose either of the following:
\begin{itemize}
\item \noindent For all $t\in\mathbb{N}_{T}$, the sequence $\left\{ X_{t}^{\mathtt{A}}\right\} _{\mathtt{A}\in\mathbb{N}}$
is marginally ${\cal C}$-weakly convergent to $X_{t}$, given $X_{t}$,
that is,
\begin{equation}
{\cal P}_{\left.X_{t}^{\mathtt{A}}\right|X_{t}}^{\mathtt{A}}\left(\left.\cdot\right|X_{t}\right)\stackrel[\mathtt{A}\rightarrow\infty]{{\cal W}}{\longrightarrow}\delta_{X_{t}}\left(\cdot\right),\quad\forall t\in\mathbb{N}_{T}.
\end{equation}

\item \noindent For all $t\in\mathbb{N}_{T}$, the sequence $\left\{ X_{t}^{\mathtt{A}}\right\} _{\mathtt{A}\in\mathbb{N}}$
is marginally convergent to $X_{t}$ in probability, that is,
\begin{equation}
X_{t}^{\mathtt{A}}\stackrel[\mathtt{A}\rightarrow\infty]{{\cal P}}{\longrightarrow}X_{t},\quad\forall t\in\mathbb{N}_{T}.
\end{equation}

\end{itemize}
\noindent Then, there exists a measurable subset $\widehat{\Omega}_{T}\subseteq\Omega$,
such that
\begin{equation}
\lim_{\mathtt{A}\rightarrow\infty}\sup_{t\in\mathbb{N}_{T}}\sup_{\omega\in\widehat{\Omega}_{T}}\mathbb{E}_{\widetilde{{\cal P}}}\left\{ \left.\left|\widehat{\Lambda}_{t}-\widehat{\Lambda}_{t}^{\mathtt{A}}\right|\right|\mathscr{Y}_{t}\right\} \left(\omega\right)\equiv0,
\end{equation}
where the ${\cal P}$,$\widetilde{{\cal P}}$-measures of $\widehat{\Omega}_{T}$
satisfy 
\begin{equation}
\min\left\{ {\cal P}\left(\widehat{\Omega}_{T}\right),\widetilde{{\cal P}}\left(\widehat{\Omega}_{T}\right)\right\} \ge1-\dfrac{\exp\left(-CN\right)}{\left(T+1\right)^{CN-1}},
\end{equation}
for any free but finite constant $C\ge1$.
\end{lem}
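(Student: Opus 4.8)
The plan is to peel $\widehat{\Lambda}_{t}$ and $\widehat{\Lambda}_{t}^{\mathtt{A}}$ into per-step factors, reduce their difference to a sum of one-step discrepancies through the telescoping inequality of Lemma \ref{Elementary_Lemma_2}, control each discrepancy by a \emph{pathwise} Lipschitz estimate that is valid on the high-probability event ${\cal T}_{T}$ of Lemma \ref{Lemma_WHP}, and finally invoke the ${\cal L}_{1}$ convergence of the state approximations (already available through Lemma \ref{L1WEAK_Lemma}, or through convergence in probability plus boundedness). Concretely, since the exponential of a sum splits into a product, $\widehat{\Lambda}_{t}\equiv\mathfrak{N}_{t}/\mathfrak{D}_{t}\equiv\prod_{i\in\mathbb{N}_{t}}\widehat{\lambda}_{i}$ and $\widehat{\Lambda}_{t}^{\mathtt{A}}\equiv\prod_{i\in\mathbb{N}_{t}}\widehat{\lambda}_{i}^{\mathtt{A}}$, where
\begin{equation}
\widehat{\lambda}_{i}\triangleq\dfrac{\exp\left(-\tfrac{1}{2}\overline{{\bf y}}_{i}^{\boldsymbol{T}}\left(X_{i}\right){\bf C}_{i}^{-1}\left(X_{i}\right)\overline{{\bf y}}_{i}\left(X_{i}\right)\right)}{\sqrt{\det\left({\bf C}_{i}\left(X_{i}\right)\right)}}
\end{equation}
and $\widehat{\lambda}_{i}^{\mathtt{A}}$ is the same quantity with $X_{i}$ replaced by $X_{i}^{\mathtt{A}}$. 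Because ${\bf C}_{i}^{-1}\left(\cdot\right)\succ{\bf 0}$ the exponents are nonpositive, so $0<\widehat{\lambda}_{i},\widehat{\lambda}_{i}^{\mathtt{A}}\le\lambda_{inf}^{-N/2}<1$ (recall $\lambda_{inf}>1$), and the scalar case of Lemma \ref{Elementary_Lemma_2} collapses — all the telescoping products being bounded by $1$ — to
\begin{equation}
\left|\widehat{\Lambda}_{t}-\widehat{\Lambda}_{t}^{\mathtt{A}}\right|\le\sum_{i\in\mathbb{N}_{t}}\left|\widehat{\lambda}_{i}-\widehat{\lambda}_{i}^{\mathtt{A}}\right|,\quad\forall t\in\mathbb{N}_{T},
\end{equation}
${\cal P}$- and $\widetilde{{\cal P}}$-almost everywhere.

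The technical core is the following pathwise claim: on ${\cal T}_{T}$, where $\sup_{i\in\mathbb{N}_{T}}\left\Vert {\bf y}_{i}\right\Vert _{2}^{2}<\gamma CN\left(1+\log\left(T+1\right)\right)$, there is a finite constant $K\equiv K\left(N,C,T\right)$, independent of $i$, $t$, $\mathtt{A}$ and $\omega$, such that for every fixed value of ${\bf y}_{i}$ obeying that bound the map
\begin{equation}
x\longmapsto\dfrac{\exp\left(-\tfrac{1}{2}\left({\bf y}_{i}-\boldsymbol{\mu}_{i}\left(x\right)\right)^{\boldsymbol{T}}{\bf C}_{i}^{-1}\left(x\right)\left({\bf y}_{i}-\boldsymbol{\mu}_{i}\left(x\right)\right)\right)}{\sqrt{\det\left({\bf C}_{i}\left(x\right)\right)}}
\end{equation}
is $K$-Lipschitz on ${\cal Z}$. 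To prove this I would: (i) use Lemma \ref{Elementary_Lemma_1} to split the quadratic form $q_{i}\left(x\right)\triangleq\left({\bf y}_{i}-\boldsymbol{\mu}_{i}\left(x\right)\right)^{\boldsymbol{T}}{\bf C}_{i}^{-1}\left(x\right)\left({\bf y}_{i}-\boldsymbol{\mu}_{i}\left(x\right)\right)$ into the contributions of $x\mapsto{\bf y}_{i}-\boldsymbol{\mu}_{i}\left(x\right)$ (Lipschitz with constant $K_{\mu}$ by Assumption 2, with $\ell_{2}$ norm at most $\sqrt{\gamma CN\left(1+\log\left(T+1\right)\right)}+\mu_{sup}$ on ${\cal T}_{T}$) and of ${\bf C}_{i}^{-1}\left(x\right)$ (Lipschitz in Frobenius norm with constant $K_{INV}$ by Lemma \ref{Elementary_Lemma_3}, with spectral norm at most $\lambda_{inf}^{-1}<1$), obtaining $\left|q_{i}\left(x\right)-q_{i}\left(y\right)\right|\le L_{q}\left|x-y\right|$ for a constant $L_{q}$ depending only on $N$, $K_{\mu}$, $\mu_{sup}$, $K_{INV}$, $\lambda_{sup}$ and the above bound on $\left\Vert {\bf y}_{i}\right\Vert _{2}$; (ii) note $\left|\exp\left(-\tfrac{1}{2}q_{i}\left(x\right)\right)-\exp\left(-\tfrac{1}{2}q_{i}\left(y\right)\right)\right|\le\tfrac{1}{2}\left|q_{i}\left(x\right)-q_{i}\left(y\right)\right|$ since $q_{i}\ge0$; (iii) observe that $x\mapsto1/\sqrt{\det\left({\bf C}_{i}\left(x\right)\right)}$ is Lipschitz because $\det\left({\bf C}_{i}\left(\cdot\right)\right)$ is Lipschitz (Lemmas \ref{Elementary_Corollary_1}--\ref{Elementary_Corollary_2}) and bounded below by $\lambda_{inf}^{N}>1$; (iv) combine (ii) and (iii) by one more application of the scalar case of Lemma \ref{Elementary_Lemma_1}, both factors being bounded by $1$. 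Evaluating at $X_{i}$ and $X_{i}^{\mathtt{A}}$ (which lie in ${\cal Z}$ almost surely) then gives, on ${\cal T}_{T}$, $\left|\widehat{\lambda}_{i}-\widehat{\lambda}_{i}^{\mathtt{A}}\right|\le K\left|X_{i}-X_{i}^{\mathtt{A}}\right|$ for all $i\in\mathbb{N}_{T}$.

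Finally, set $\widehat{\Omega}_{T}\triangleq{\cal T}_{T}$, so that $\min\left\{ {\cal P}\left(\widehat{\Omega}_{T}\right),\widetilde{{\cal P}}\left(\widehat{\Omega}_{T}\right)\right\} \ge1-\exp\left(-CN\right)/\left(T+1\right)^{CN-1}$ by Lemma \ref{Lemma_WHP}. Fix $\omega\in\widehat{\Omega}_{T}$ and $t\in\mathbb{N}_{T}$. Under $\widetilde{{\cal P}}$ the pair $\left(X,X^{\mathtt{A}}\right)$ is independent of ${\bf y}$ (by construction of the approximation and Theorem \ref{thm:COMCOM}), so a version of the conditional expectation is obtained by freezing the observations ${\bf y}_{i}$, $i\in\mathbb{N}_{t}$, at their values at $\omega$ and integrating over the joint law of $\left(\left\{ X_{i}\right\} _{i\in\mathbb{N}_{t}},\left\{ X_{i}^{\mathtt{A}}\right\} _{i\in\mathbb{N}_{t}}\right)$; since $\omega\in{\cal T}_{T}$ makes the pathwise estimate applicable inside that integral for every $i\le t\le T$, combining with the telescoping bound yields
\begin{equation}
\mathbb{E}_{\widetilde{{\cal P}}}\left\{ \left.\left|\widehat{\Lambda}_{t}-\widehat{\Lambda}_{t}^{\mathtt{A}}\right|\right|\mathscr{Y}_{t}\right\} \left(\omega\right)\le K\sum_{i\in\mathbb{N}_{t}}\mathbb{E}_{\widetilde{{\cal P}}}\left\{ \left|X_{i}-X_{i}^{\mathtt{A}}\right|\right\} \le K\left(T+1\right)\sup_{i\in\mathbb{N}_{T}}\mathbb{E}_{\widetilde{{\cal P}}}\left\{ \left|X_{i}-X_{i}^{\mathtt{A}}\right|\right\} .
\end{equation}
Because $X$, and hence $\left(X,X^{\mathtt{A}}\right)$, has the same law under $\widetilde{{\cal P}}$ as under ${\cal P}$ (Theorem \ref{thm:COMCOM}), the right-hand side is measure-independent; under the ${\cal C}$-weak hypothesis it tends to $0$ as $\mathtt{A}\rightarrow\infty$ by Lemmas \ref{L1WEAK_Lemma} and \ref{L1WEAK_LEMMA}, while under the convergence-in-probability hypothesis it tends to $0$ because convergence in probability together with the a.s. bound $\left|X_{i}-X_{i}^{\mathtt{A}}\right|\le b-a$ forces ${\cal L}_{1}$ convergence, again followed by Lemma \ref{L1WEAK_LEMMA}. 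The right-hand side of the last display being independent of $t$ and $\omega$, taking $\sup_{t\in\mathbb{N}_{T}}\sup_{\omega\in\widehat{\Omega}_{T}}$ and then letting $\mathtt{A}\rightarrow\infty$ proves the lemma. The only real obstacle is the bookkeeping in the pathwise Lipschitz estimate of the middle paragraph: one must keep the constant $K$ genuinely uniform in $i$ and in $\omega\in{\cal T}_{T}$ — which is precisely what restricting to ${\cal T}_{T}$ buys us — while keeping its (harmless but analytically relevant) growth in $N$ under control.
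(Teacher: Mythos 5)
Your proposal is correct, and it reaches the same final bound as the paper — a constant of order $T\log T$ (controlled on the high-probability event of Lemma \ref{Lemma_WHP}) times $\sup_{i\in\mathbb{N}_{T}}\mathbb{E}_{\widetilde{{\cal P}}}\left\{ \left|X_{i}-X_{i}^{\mathtt{A}}\right|\right\}$, followed by the same appeal to Lemmas \ref{L1WEAK_Lemma} and \ref{L1WEAK_LEMMA} — but the core algebraic decomposition is genuinely different. The paper never factorizes $\widehat{\Lambda}_{t}$ into one-step likelihoods: it applies Lemma \ref{Elementary_Lemma_1} to the ratio $\mathfrak{N}_{t}/\mathfrak{D}_{t}$ as a whole, producing a ``numerator'' term and a ``denominator'' term, then handles the denominator by telescoping (Lemma \ref{Elementary_Lemma_2}) over the per-step determinants and the numerator by the inequality $\left|e^{\alpha}-e^{\beta}\right|\le\left|\alpha-\beta\right|\left(e^{\alpha}+e^{\beta}\right)$ applied to the full sums of quadratic forms. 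You instead telescope once, over the per-step factors $\widehat{\lambda}_{i}$, exploiting that each is bounded by $\lambda_{inf}^{-N/2}<1$ so all partial products collapse to $1$, and then prove a single pathwise Lipschitz estimate for the one-step map $x\mapsto e^{-q_{i}\left(x\right)/2}/\sqrt{\det\left({\bf C}_{i}\left(x\right)\right)}$, valid uniformly in $i\in\mathbb{N}_{T}$ on ${\cal T}_{T}$. Your route is arguably cleaner (one telescoping step instead of a numerator/denominator split, and the sharper $\left|e^{-q/2}-e^{-q'/2}\right|\le\tfrac{1}{2}\left|q-q'\right|$ for $q,q'\ge0$ in place of the paper's two-sided exponential inequality), at the cost of discarding the extra $\lambda_{inf}^{-Nt/2}$ decay that the paper retains in the partial products and later uses to offer the alternative constant $K_{G}\left(T\right)\equiv{\cal O}\left(\log T\right)$. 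The remaining ingredients — the event $\widehat{\Omega}_{T}\equiv{\cal T}_{T}$, the independence of $\left(X,X^{\mathtt{A}}\right)$ from the observations under $\widetilde{{\cal P}}$ used to evaluate the conditional expectation, and the passage from either hypothesis to ${\cal L}_{1}$ convergence — coincide with the paper's.
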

\medskip{}

\begin{proof}[Proof of Lemma \ref{RD_Derivatives_Lemma}]
From Lemma \ref{Elementary_Lemma_1}, it is true that
\begin{flalign}
\left|\widehat{\Lambda}_{t}-\widehat{\Lambda}_{t}^{\mathtt{A}}\right|\equiv\left|\dfrac{\mathfrak{N}_{t}}{\mathfrak{D}_{t}}-\dfrac{\mathfrak{N}_{t}^{\mathtt{A}}}{\mathfrak{D}_{t}^{\mathtt{A}}}\right| & \le\dfrac{\left|\mathfrak{N}_{t}-\mathfrak{N}_{t}^{\mathtt{A}}\right|}{\left|\mathfrak{D}_{t}^{\mathtt{A}}\right|}+\left|\mathfrak{N}_{t}\right|\left|\dfrac{1}{\mathfrak{D}_{t}}-\dfrac{1}{\mathfrak{D}_{t}^{\mathtt{A}}}\right|\nonumber \\
 & \le\dfrac{\left|\mathfrak{N}_{t}-\mathfrak{N}_{t}^{\mathtt{A}}\right|}{\left|\mathfrak{D}_{t}^{\mathtt{A}}\right|}+\left|\dfrac{1}{\mathfrak{D}_{t}}-\dfrac{1}{\mathfrak{D}_{t}^{\mathtt{A}}}\right|.\label{eq:CORE_Inequality}
\end{flalign}
We first concentrate on the determinant part (second term) of the
RHS of (\ref{eq:CORE_Inequality}). Directly invoking Lemma \ref{Elementary_Lemma_2},
it will be true that
\begin{flalign}
 & \hspace{-2pt}\hspace{-2pt}\left|\dfrac{1}{\mathfrak{D}_{t}}-\dfrac{1}{\mathfrak{D}_{t}^{\mathtt{A}}}\right|\nonumber \\
 & \hspace{-2pt}\equiv\hspace{-2pt}\left|\prod_{i=0}^{t}\dfrac{1}{\sqrt{\det\left({\bf C}_{i}\left(X_{i}\right)\right)}}-\prod_{i=0}^{t}\dfrac{1}{\sqrt{\det\left(\hspace{-2pt}{\bf C}_{i}\left(X_{i}^{\mathtt{A}}\right)\hspace{-2pt}\right)}}\right|\nonumber \\
 & \hspace{-2pt}\le\hspace{-2pt}\sum_{i=0}^{t}\hspace{-2pt}\left(\prod_{j=0}^{i-1}\dfrac{1}{\sqrt{\det\left({\bf C}_{j}\left(X_{j}\right)\right)}}\right)\hspace{-2pt}\hspace{-2pt}\left(\prod_{j=i+1}^{t}\dfrac{1}{\sqrt{\det\left(\hspace{-2pt}{\bf C}_{j}\left(X_{j}^{\mathtt{A}}\right)\hspace{-2pt}\right)}}\right)\hspace{-2pt}\dfrac{\left|\sqrt{\det\left({\bf C}_{i}\left(X_{i}\right)\right)}\hspace{-2pt}-\hspace{-2pt}\sqrt{\det\left(\hspace{-2pt}{\bf C}_{i}\left(X_{i}^{\mathtt{A}}\right)\hspace{-2pt}\right)}\right|}{\sqrt{\det\left({\bf C}_{i}\left(X_{i}\right)\right)\det\left(\hspace{-2pt}{\bf C}_{i}\left(X_{i}^{\mathtt{A}}\right)\hspace{-2pt}\right)}}\nonumber \\
 & \hspace{-2pt}=\hspace{-2pt}\sum_{i=0}^{t}\hspace{-2pt}\left(\prod_{j=0}^{i-1}\dfrac{1}{\sqrt{{\displaystyle \prod_{n=1}^{N}}\lambda_{n}\left({\bf C}_{j}\left(X_{j}\right)\right)}}\right)\hspace{-2pt}\hspace{-2pt}\left(\prod_{j=i+1}^{t}\dfrac{1}{\sqrt{{\displaystyle \prod_{n=1}^{N}}\lambda_{n}\left(\hspace{-2pt}{\bf C}_{j}\left(X_{j}^{\mathtt{A}}\right)\hspace{-2pt}\right)}}\right)\hspace{-2pt}\dfrac{\left|\sqrt{\det\left({\bf C}_{i}\left(X_{i}\right)\right)}\hspace{-2pt}-\hspace{-2pt}\sqrt{\det\left(\hspace{-2pt}{\bf C}_{i}\left(X_{i}^{\mathtt{A}}\right)\hspace{-2pt}\right)}\right|}{\sqrt{{\displaystyle \prod_{n=1}^{N}}\lambda_{n}\left({\bf C}_{j}\left(X_{j}\right)\right)\lambda_{n}\left(\hspace{-2pt}{\bf C}_{j}\left(X_{j}^{\mathtt{A}}\right)\hspace{-2pt}\right)}}\nonumber \\
 & \hspace{-2pt}\le\hspace{-2pt}\sum_{i=0}^{t}\dfrac{1}{2\lambda_{inf}^{Ni/2}\lambda_{inf}^{N\left(t-i\right)/2}}\dfrac{\left|\det\left({\bf C}_{i}\left(X_{i}\right)\right)-\det\left(\hspace{-2pt}{\bf C}_{i}\left(X_{i}^{\mathtt{A}}\right)\hspace{-2pt}\right)\right|}{\lambda_{inf}^{N}}\nonumber \\
 & \hspace{-2pt}\equiv\hspace{-2pt}\dfrac{1}{2\sqrt{\lambda_{inf}^{N\left(t+2\right)}}}\sum_{i=0}^{t}\left|\det\left({\bf C}_{i}\left(X_{i}\right)\right)-\det\left(\hspace{-2pt}{\bf C}_{i}\left(X_{i}^{\mathtt{A}}\right)\hspace{-2pt}\right)\right|.
\end{flalign}
From Lemma \ref{Elementary_Corollary_2}, we can bound the RHS of
the above expression as
\begin{flalign}
\left|\dfrac{1}{\mathfrak{D}_{t}}-\dfrac{1}{\mathfrak{D}_{t}^{\mathtt{A}}}\right| & \le\dfrac{NK_{DET}}{2\sqrt{\lambda_{inf}^{N\left(t+2\right)}}}\sum_{i=0}^{t}\left\Vert {\bf C}_{i}\left(X_{i}\right)-{\bf C}_{i}\left(X_{i}^{\mathtt{A}}\right)\right\Vert _{F}\nonumber \\
 & \equiv\dfrac{NK_{DET}}{2\sqrt{\lambda_{inf}^{N\left(t+2\right)}}}\sum_{i=0}^{t}\left\Vert \boldsymbol{\Sigma}_{i}\left(X_{i}\right)-\boldsymbol{\Sigma}_{i}\left(X_{i}^{\mathtt{A}}\right)\right\Vert _{F}.\label{eq:DET_LIP}
\end{flalign}
And from Lemma \ref{Elementary_Corollary_1}, (\ref{eq:DET_LIP})
becomes
\begin{equation}
\left|\dfrac{1}{\mathfrak{D}_{t}}-\dfrac{1}{\mathfrak{D}_{t}^{\mathtt{A}}}\right|\le\dfrac{N^{2}K_{DET}K_{\boldsymbol{\Sigma}}}{2\sqrt{\lambda_{inf}^{N\left(t+2\right)}}}\sum_{i=0}^{t}\left|X_{i}-X_{i}^{\mathtt{A}}\right|.\label{eq:DET_star}
\end{equation}

We now turn our attention to the ``difference of exponentials''
part (first term) of the RHS of (\ref{eq:CORE_Inequality}). First,
we know that
\begin{flalign}
{\displaystyle \prod_{i=0}^{t}}\det\left({\bf C}_{i}\left(X_{i}^{\mathtt{A}}\right)\right) & \ge{\displaystyle \prod_{i=0}^{t}}{\displaystyle \prod_{j=1}^{N}}\lambda_{inf}\equiv\lambda_{inf}^{N\left(t+1\right)},
\end{flalign}
yielding the inequality
\begin{equation}
\dfrac{\left|\mathfrak{N}_{t}-\mathfrak{N}_{t}^{\mathtt{A}}\right|}{\left|\mathfrak{D}_{t}^{\mathtt{A}}\right|}\le\dfrac{\left|\mathfrak{N}_{t}-\mathfrak{N}_{t}^{\mathtt{A}}\right|}{\sqrt{\lambda_{inf}^{N\left(t+1\right)}}},\label{eq:Diff_of EXPS}
\end{equation}
where $\lambda_{inf}>1$ (see Assumption 1). Next, making use of the
inequality \cite{Kushner2008}
\begin{equation}
\left|\exp\left(\alpha\right)-\exp\left(\beta\right)\right|\le\left|\alpha-\beta\right|\left(\exp\left(\alpha\right)+\exp\left(\beta\right)\right),
\end{equation}
$\forall\left(\alpha,\beta\right)\in\mathbb{R}^{2}$, the absolute
difference on the numerator of (\ref{eq:Diff_of EXPS}) can be upper
bounded as
\begin{flalign}
\left|\mathfrak{N}_{t}-\mathfrak{N}_{t}^{\mathtt{A}}\right| & \le\dfrac{1}{2}\left|{\displaystyle \sum_{i=0}^{t}}\overline{{\bf y}}_{i}^{\boldsymbol{T}}\left(X_{i}\right){\bf C}_{i}^{-1}\left(X_{i}\right)\overline{{\bf y}}_{i}\left(X_{i}\right)-\right.\left.\overline{{\bf y}}_{i}^{\boldsymbol{T}}\left(X_{i}^{\mathtt{A}}\right){\bf C}_{i}^{-1}\left(X_{i}^{\mathtt{A}}\right)\overline{{\bf y}}_{i}\left(X_{i}^{\mathtt{A}}\right)\vphantom{{\displaystyle \sum_{i=0}^{t}}\overline{{\bf y}}_{i}^{\boldsymbol{T}}\left(X_{i}\right){\bf C}_{i}^{-1}\left(X_{i}\right)\overline{{\bf y}}_{i}\left(X_{i}\right)}\right|\left(\mathfrak{N}_{t}+\mathfrak{N}_{t}^{\mathtt{A}}\right)\nonumber \\
 & \le\sum_{i=0}^{t}\left|\overline{{\bf y}}_{i}^{\boldsymbol{T}}\left(X_{i}\right){\bf C}_{i}^{-1}\left(X_{i}\right)\overline{{\bf y}}_{i}\left(X_{i}\right)-\right.\left.\overline{{\bf y}}_{i}^{\boldsymbol{T}}\left(X_{i}^{\mathtt{A}}\right){\bf C}_{i}^{-1}\left(X_{i}^{\mathtt{A}}\right)\overline{{\bf y}}_{i}\left(X_{i}^{\mathtt{A}}\right)\right|.\label{eq:EXPS_LIP}
\end{flalign}
Concentrating on each member of the series above in the last line
of (\ref{eq:EXPS_LIP}) and calling Lemma \ref{Elementary_Lemma_1},
it is true that
\begin{multline}
\left|\overline{{\bf y}}_{i}^{\boldsymbol{T}}\left(X_{i}\right)\left[{\bf C}_{i}^{-1}\left(X_{i}\right)\overline{{\bf y}}_{i}\left(X_{i}\right)\right]-\right.\left.\overline{{\bf y}}_{i}^{\boldsymbol{T}}\left(X_{i}^{\mathtt{A}}\right)\left[{\bf C}_{i}^{-1}\left(X_{i}^{\mathtt{A}}\right)\overline{{\bf y}}_{i}\left(X_{i}^{\mathtt{A}}\right)\right]\right|\\
\le\left\Vert \overline{{\bf y}}_{i}\left(X_{i}\right)\right\Vert _{2}\left\Vert {\bf C}_{i}^{-1}\left(X_{i}\right)\overline{{\bf y}}_{i}\left(X_{i}\right)-{\bf C}_{i}^{-1}\left(X_{i}^{\mathtt{A}}\right)\overline{{\bf y}}_{i}\left(X_{i}^{\mathtt{A}}\right)\right\Vert _{2}+\\
+\left\Vert {\bf C}_{i}^{-1}\left(X_{i}^{\mathtt{A}}\right)\overline{{\bf y}}_{i}\left(X_{i}^{\mathtt{A}}\right)\right\Vert _{2}\left\Vert \overline{{\bf y}}_{i}\left(X_{i}\right)-\overline{{\bf y}}_{i}\left(X_{i}^{\mathtt{A}}\right)\right\Vert _{2}.
\end{multline}
Calling Lemma \ref{Elementary_Lemma_1} again for the term multiplying
the quantity $\left\Vert \overline{{\bf y}}_{i}\left(X_{i}\right)\right\Vert _{2}$
in the RHS of the above expression, we arrive at the inequalities
\begin{flalign}
 & \hspace{-2pt}\hspace{-2pt}\hspace{-2pt}\hspace{-2pt}\hspace{-2pt}\hspace{-2pt}\hspace{-2pt}\hspace{-2pt}\hspace{-2pt}\hspace{-2pt}\hspace{-2pt}\hspace{-2pt}\hspace{-2pt}\hspace{-2pt}\hspace{-2pt}\hspace{-2pt}\hspace{-2pt}\hspace{-2pt}\hspace{-2pt}\hspace{-2pt}\hspace{-2pt}\hspace{-2pt}\hspace{-2pt}\hspace{-2pt}\left|\overline{{\bf y}}_{i}^{\boldsymbol{T}}\left(X_{i}\right){\bf C}_{i}^{-1}\left(X_{i}\right)\overline{{\bf y}}_{i}\left(X_{i}\right)-\right.\left.\overline{{\bf y}}_{i}^{\boldsymbol{T}}\left(X_{i}^{\mathtt{A}}\right){\bf C}_{i}^{-1}\left(X_{i}^{\mathtt{A}}\right)\overline{{\bf y}}_{i}\left(X_{i}^{\mathtt{A}}\right)\right|\nonumber \\
 & \le\left\Vert \overline{{\bf y}}_{i}\left(X_{i}\right)\right\Vert _{2}\left\Vert {\bf C}_{i}^{-1}\left(X_{i}\right)\right\Vert _{2}\left\Vert \overline{{\bf y}}_{i}\left(X_{i}\right)-\overline{{\bf y}}_{i}\left(X_{i}^{\mathtt{A}}\right)\right\Vert _{2}\nonumber \\
 & \quad\quad\quad\quad+\left\Vert \overline{{\bf y}}_{i}\left(X_{i}^{\mathtt{A}}\right)\right\Vert _{2}\left\Vert {\bf C}_{i}^{-1}\left(X_{i}^{\mathtt{A}}\right)\right\Vert _{2}\left\Vert \overline{{\bf y}}_{i}\left(X_{i}\right)-\overline{{\bf y}}_{i}\left(X_{i}^{\mathtt{A}}\right)\right\Vert _{2}\nonumber \\
 & \quad\quad\quad\quad\quad\quad\quad\quad+\left\Vert \overline{{\bf y}}_{i}\left(X_{i}\right)\right\Vert _{2}\left\Vert \overline{{\bf y}}_{i}\left(X_{i}^{\mathtt{A}}\right)\right\Vert _{2}\left\Vert {\bf C}_{i}^{-1}\left(X_{i}\right)-{\bf C}_{i}^{-1}\left(X_{i}^{\mathtt{A}}\right)\right\Vert _{2}\nonumber \\
 & \le\dfrac{\left\Vert \overline{{\bf y}}_{i}\left(X_{i}\right)\right\Vert _{2}}{\lambda_{inf}}\left\Vert \overline{{\bf y}}_{i}\left(X_{i}\right)-\overline{{\bf y}}_{i}\left(X_{i}^{\mathtt{A}}\right)\right\Vert _{2}+\dfrac{\left\Vert \overline{{\bf y}}_{i}\left(X_{i}^{\mathtt{A}}\right)\right\Vert _{2}}{\lambda_{inf}}\left\Vert \overline{{\bf y}}_{i}\left(X_{i}\right)-\overline{{\bf y}}_{i}\left(X_{i}^{\mathtt{A}}\right)\right\Vert _{2}\nonumber \\
 & \quad\quad\quad\quad+\left\Vert \overline{{\bf y}}_{i}\left(X_{i}\right)\right\Vert _{2}\left\Vert \overline{{\bf y}}_{i}\left(X_{i}^{\mathtt{A}}\right)\right\Vert _{2}\left\Vert {\bf C}_{i}^{-1}\left(X_{i}\right)-{\bf C}_{i}^{-1}\left(X_{i}^{\mathtt{A}}\right)\right\Vert _{2},
\end{flalign}
or, equivalently,
\begin{multline}
\left|\overline{{\bf y}}_{i}^{\boldsymbol{T}}\left(X_{i}\right){\bf C}_{i}^{-1}\left(X_{i}\right)\overline{{\bf y}}_{i}\left(X_{i}\right)-\right.\left.\overline{{\bf y}}_{i}^{\boldsymbol{T}}\left(X_{i}^{\mathtt{A}}\right){\bf C}_{i}^{-1}\left(X_{i}^{\mathtt{A}}\right)\overline{{\bf y}}_{i}\left(X_{i}^{\mathtt{A}}\right)\right|\\
\le\dfrac{\left\Vert \overline{{\bf y}}_{i}\left(X_{i}\right)\right\Vert _{2}+\left\Vert \overline{{\bf y}}_{i}\left(X_{i}^{\mathtt{A}}\right)\right\Vert _{2}}{\lambda_{inf}}\left\Vert \overline{{\bf y}}_{i}\left(X_{i}\right)-\overline{{\bf y}}_{i}\left(X_{i}^{\mathtt{A}}\right)\right\Vert _{2}+\\
+\left\Vert \overline{{\bf y}}_{i}\left(X_{i}\right)\right\Vert _{2}\left\Vert \overline{{\bf y}}_{i}\left(X_{i}^{\mathtt{A}}\right)\right\Vert _{2}\left\Vert {\bf C}_{i}^{-1}\left(X_{i}\right)-{\bf C}_{i}^{-1}\left(X_{i}^{\mathtt{A}}\right)\right\Vert _{2}.
\end{multline}
Now, recalling Assumption 2, the definition of $\overline{{\bf y}}_{i}\left(X_{i}\right)$
(resp. for $X_{i}^{\mathtt{A}}$) and invoking Lemma \ref{Elementary_Lemma_3},
it must be true that
\begin{flalign}
 & \hspace{-2pt}\hspace{-2pt}\hspace{-2pt}\hspace{-2pt}\hspace{-2pt}\hspace{-2pt}\hspace{-2pt}\hspace{-2pt}\hspace{-2pt}\hspace{-2pt}\hspace{-2pt}\hspace{-2pt}\left|\overline{{\bf y}}_{i}^{\boldsymbol{T}}\left(X_{i}\right){\bf C}_{i}^{-1}\left(X_{i}\right)\overline{{\bf y}}_{i}\left(X_{i}\right)-\right.\left.\overline{{\bf y}}_{i}^{\boldsymbol{T}}\left(X_{i}^{\mathtt{A}}\right){\bf C}_{i}^{-1}\left(X_{i}^{\mathtt{A}}\right)\overline{{\bf y}}_{i}\left(X_{i}^{\mathtt{A}}\right)\right|\nonumber \\
 & \le\dfrac{\left\Vert {\bf y}_{i}-\boldsymbol{\mu}_{i}\left(X_{i}\right)\right\Vert _{2}+\left\Vert {\bf y}_{i}-\boldsymbol{\mu}_{i}\left(X_{i}^{\mathtt{A}}\right)\right\Vert _{2}}{\lambda_{inf}}\left\Vert \boldsymbol{\mu}_{i}\left(X_{i}\right)-\boldsymbol{\mu}_{i}\left(X_{i}^{\mathtt{A}}\right)\right\Vert _{2}\nonumber \\
 & \quad\quad\quad\quad+\left\Vert {\bf y}_{i}-\boldsymbol{\mu}_{i}\left(X_{i}\right)\right\Vert _{2}\left\Vert {\bf y}_{i}-\boldsymbol{\mu}_{i}\left(X_{i}^{\mathtt{A}}\right)\right\Vert _{2}\left\Vert {\bf C}_{i}^{-1}\left(X_{i}\right)-{\bf C}_{i}^{-1}\left(X_{i}^{\mathtt{A}}\right)\right\Vert _{2}\nonumber \\
 & \le\left(K_{\mu}\dfrac{2\left(\left\Vert {\bf y}_{i}\right\Vert _{2}+\mu_{sup}\right)}{\lambda_{inf}}+K_{INV}\left(\left\Vert {\bf y}_{i}\right\Vert _{2}+\mu_{sup}\right)^{2}\right)\left|X_{i}-X_{i}^{\mathtt{A}}\right|\triangleq\Theta\left({\bf y}_{i}\right)\left|X_{i}-X_{i}^{\mathtt{A}}\right|.\label{eq:Bound_With_Epsilons}
\end{flalign}
Using the above inequality, the RHS of (\ref{eq:Diff_of EXPS}) can
be further bounded from above as
\begin{equation}
\dfrac{\left|\mathfrak{N}_{t}-\mathfrak{N}_{t}^{\mathtt{A}}\right|}{\left|\mathfrak{D}_{t}^{\mathtt{A}}\right|}\le\dfrac{{\displaystyle \sup_{i\in\mathbb{N}_{t}}\Theta\left({\bf y}_{i}\right)}}{\sqrt{\lambda_{inf}^{N\left(t+1\right)}}}\sum_{i=0}^{t}\left|X_{i}-X_{i}^{\mathtt{A}}\right|.
\end{equation}

Therefore, we can bound the RHS of (\ref{eq:CORE_Inequality}) as
\begin{align}
\left|\widehat{\Lambda}_{t}-\widehat{\Lambda}_{t}^{\mathtt{A}}\right| & \le\left(\dfrac{{\displaystyle \sup_{i\in\mathbb{N}_{t}}\Theta\left({\bf y}_{i}\right)}}{\sqrt{\lambda_{inf}^{N\left(t+1\right)}}}+\dfrac{N^{2}K_{DET}K_{\boldsymbol{\Sigma}}}{2\sqrt{\lambda_{inf}^{N\left(t+2\right)}}}\right)\sum_{i=0}^{t}\left|X_{i}-X_{i}^{\mathtt{A}}\right|.\label{eq:CORE_Ineq_2}
\end{align}
Taking conditional expectations on both sides of (\ref{eq:CORE_Ineq_2}),
observing that the quantity $\sup_{i\in\mathbb{N}_{t}}\Theta\left({\bf y}_{i}\right)$
constitutes a $\left\{ \mathscr{Y}_{t}\right\} $-adapted process
and recalling that under the base measure $\widetilde{{\cal P}}$
(see Theorem 2), the processes ${\bf y}_{t}$ and $X_{t}$ (resp.
$X_{t}^{\mathtt{A}}$) are statistically independent, we can write
\begin{align}
\mathbb{E}_{\widetilde{{\cal P}}}\left\{ \left.\left|\widehat{\Lambda}_{t}-\widehat{\Lambda}_{t}^{\mathtt{A}}\right|\right|\mathscr{Y}_{t}\right\}  & \le\left(\dfrac{{\displaystyle \sup_{i\in\mathbb{N}_{t}}\Theta\left({\bf y}_{i}\right)}}{\sqrt{\lambda_{inf}^{N\left(t+1\right)}}}+\dfrac{N^{2}K_{DET}K_{\boldsymbol{\Sigma}}}{2\sqrt{\lambda_{inf}^{N\left(t+2\right)}}}\right)\mathbb{E}_{\widetilde{{\cal P}}}\left\{ \sum_{i=0}^{t}\left|X_{i}-X_{i}^{\mathtt{A}}\right|\right\} ,
\end{align}
$\widetilde{{\cal P}}-a.e.$, and, because ${\cal P}\ll_{\mathscr{H}_{t}}\widetilde{{\cal P}}$,
${\cal P}-a.e.$ as well. From the last inequality, we can readily
observe that in order to be able to talk about any kind of uniform
convergence regarding the RHS, it is vital to ensure that the random
variable $\sup_{i\in\mathbb{N}_{t}}\Theta\left({\bf y}_{i}\right)$
is bounded from above. However, because the support of $\left\Vert {\bf y}_{i}\right\Vert _{2}$
is infinite, it is impossible to bound $\sup_{i\in\mathbb{N}_{t}}\Theta\left({\bf y}_{i}\right)$
in the almost sure sense. Nevertheless, Lemma \ref{Lemma_WHP} immediately
implies that there exists a measurable subset $\widehat{\Omega}_{\tau}\subseteq\Omega$
with
\begin{equation}
\min\left\{ {\cal P}\left(\widehat{\Omega}_{\tau}\right),\widetilde{{\cal P}}\left(\widehat{\Omega}_{\tau}\right)\right\} \ge1-\dfrac{\exp\left(-CN\right)}{\left(\tau+1\right)^{CN-1}}
\end{equation}
 such that, for all $\omega\in\widehat{\Omega}_{\tau}$,
\begin{equation}
\sup_{i\in\mathbb{N}_{\tau}}\left\Vert {\bf y}_{i}\left(\omega\right)\right\Vert _{2}^{2}\equiv\sup_{i\in\mathbb{N}_{\tau}}\left\Vert {\bf y}_{i}\right\Vert _{2}^{2}<\gamma CN\left(1+\log\left(1+\tau\right)\right),
\end{equation}
for some fixed constant $\gamma>1$, for any $C\ge1$ and for any
\textit{fixed} $\tau\in\mathbb{N}$. Choosing $\tau\equiv T<\infty$,
it is true that 
\begin{flalign}
\sup_{i\in\mathbb{N}_{t}}\Theta\left({\bf y}_{i}\right) & \le\sup_{i\in\mathbb{N}_{T}}\Theta\left({\bf y}_{i}\right)\nonumber \\
 & \le\sup_{i\in\mathbb{N}_{T}}\left[K_{\mu}\dfrac{2\left(\left\Vert {\bf y}_{i}\right\Vert _{2}+\mu_{sup}\right)}{\lambda_{inf}}+K_{INV}\left(\left\Vert {\bf y}_{i}\right\Vert _{2}+\mu_{sup}\right)^{2}\right]\nonumber \\
 & <\left(K_{\mu}\dfrac{2\widetilde{\gamma}}{\lambda_{inf}}+K_{INV}\widetilde{\gamma}^{2}\right)CN\left(1+\log\left(1+T\right)\right)\nonumber \\
 & \triangleq K_{o}CN\left(1+\log\left(1+T\right)\right),\quad\forall t\in\mathbb{N}_{T},
\end{flalign}
where $\widetilde{\gamma}\triangleq\sqrt{\gamma}+\mu_{sup}$. Therefore,
it will be true that
\begin{align}
\mathbb{E}_{\widetilde{{\cal P}}}\left\{ \left.\left|\widehat{\Lambda}_{t}-\widehat{\Lambda}_{t}^{\mathtt{A}}\right|\right|\mathscr{Y}_{t}\right\}  & \le\left(\dfrac{K_{o}CN\left(1+\log\left(1+T\right)\right)}{\sqrt{\lambda_{inf}^{N\left(t+1\right)}}}+\dfrac{N^{2}K_{DET}K_{\boldsymbol{\Sigma}}}{2\sqrt{\lambda_{inf}^{N\left(t+2\right)}}}\right)\sum_{i=0}^{t}\mathbb{E}_{\widetilde{{\cal P}}}\left\{ \left|X_{i}-X_{i}^{\mathtt{A}}\right|\right\} ,
\end{align}
for all $t\in\mathbb{N}_{T}$, with probability at least 
\[
1-\dfrac{\exp\left(-CN\right)}{\left(T+1\right)^{CN-1}},
\]
under either ${\cal P}$ or $\widetilde{{\cal P}}$. Further,
\begin{align}
\sum_{i=0}^{t}\mathbb{E}_{\widetilde{{\cal P}}}\left\{ \left|X_{i}-X_{i}^{\mathtt{A}}\right|\right\}  & \le\left(t+1\right)\sup_{\tau\in\mathbb{N}_{t}}\mathbb{E}_{\widetilde{{\cal P}}}\left\{ \left|X_{\tau}-X_{\tau}^{\mathtt{A}}\right|\right\} .
\end{align}
Then, with the same probability of success,
\begin{flalign}
 & \hspace{-2pt}\hspace{-2pt}\hspace{-2pt}\hspace{-2pt}\hspace{-2pt}\hspace{-2pt}\hspace{-2pt}\hspace{-2pt}\hspace{-2pt}\hspace{-2pt}\hspace{-2pt}\hspace{-2pt}\mathbb{E}_{\widetilde{{\cal P}}}\left\{ \left.\left|\widehat{\Lambda}_{t}-\widehat{\Lambda}_{t}^{\mathtt{A}}\right|\right|\mathscr{Y}_{t}\right\} \nonumber \\
 & \le\left(\dfrac{K_{o}CN\left(1+\log\left(1+T\right)\right)\left(t+1\right)}{\sqrt{\lambda_{inf}^{N\left(t+1\right)}}}+\right.\left.\dfrac{K_{DET}K_{\boldsymbol{\Sigma}}N^{2}\left(t+1\right)}{2\sqrt{\lambda_{inf}^{N\left(t+2\right)}}}\right)\sup_{\tau\in\mathbb{N}_{t}}\mathbb{E}_{\widetilde{{\cal P}}}\left\{ \left|X_{\tau}-X_{\tau}^{\mathtt{A}}\right|\right\} \nonumber \\
 & \le\left(\dfrac{K_{o}CN\left(1+\log\left(1+T\right)\right)\left(T+1\right)}{\lambda_{inf}^{N/2}}+\right.\left.\dfrac{K_{DET}K_{\boldsymbol{\Sigma}}N^{2}\left(T+1\right)}{2\lambda_{inf}^{N}}\right)\sup_{\tau\in\mathbb{N}_{t}}\mathbb{E}_{\widetilde{{\cal P}}}\left\{ \left|X_{\tau}-X_{\tau}^{\mathtt{A}}\right|\right\} \nonumber \\
 & \triangleq K_{G}\left(T\right)\sup_{\tau\in\mathbb{N}_{t}}\mathbb{E}_{\widetilde{{\cal P}}}\left\{ \left|X_{\tau}-X_{\tau}^{\mathtt{A}}\right|\right\} ,\quad\forall t\in\mathbb{N}_{T},
\end{flalign}
where $K_{G}\left(T\right)\equiv{\cal O}\left(T\log\left(T\right)\right)$.
\textit{Alternatively}, upper bounding the functions comprised by
the quantities $t,N,\lambda_{inf}$ in the second and third lines
of the expressions above as (note that, obviously, $t+1\ge1,\forall t\in\mathbb{R}_{+}$)
\begin{flalign}
\dfrac{N\left(t+1\right)}{\sqrt{\lambda_{inf}^{N\left(t+1\right)}}} & \le\max_{t\in\mathbb{R}_{+}}\dfrac{N\left(t+1\right)^{2}}{\sqrt{\lambda_{inf}^{N\left(t+1\right)}}}\quad\text{and}\\
\dfrac{N^{2}\left(t+1\right)}{\sqrt{\lambda_{inf}^{N\left(t+2\right)}}} & \le\max_{t\in\mathbb{R}_{+}}\dfrac{N^{2}\left(t+1\right)^{2}}{\sqrt{\lambda_{inf}^{N\left(t+2\right)}}},
\end{flalign}
respectively, we can also define
\begin{align}
K_{G}\left(T\right) & \triangleq\dfrac{16K_{o}C\left(1+\log\left(1+T\right)\right)\lambda_{inf}^{-2/\log\left(\lambda_{inf}\right)}}{N\left(\log\left(\lambda_{inf}\right)\right)^{2}}+\dfrac{8K_{DET}K_{\boldsymbol{\Sigma}}\lambda_{inf}^{-N}\lambda_{inf}^{-2/\log\left(\lambda_{inf}\right)}}{\left(\log\left(\lambda_{inf}\right)\right)^{2}},
\end{align}
where, in this case, $K_{G}\left(T\right)\equiv{\cal O}\left(\log\left(T\right)\right)$.
Note, however, that although its dependence on $T$ is logarithmic,
$K_{G}\left(T\right)$ may still be large due to the inability to
compensate for the size of $K_{o}$. In any case, for all $\omega\in\widehat{\Omega}_{T}$,
\begin{align}
\mathbb{E}_{\widetilde{{\cal P}}}\left\{ \left.\left|\widehat{\Lambda}_{t}-\widehat{\Lambda}_{t}^{\mathtt{A}}\right|\right|\mathscr{Y}_{t}\right\} \left(\omega\right) & \le K_{G}\left(T\right)\sup_{\tau\in\mathbb{N}_{t}}\mathbb{E}_{\widetilde{{\cal P}}}\left\{ \left|X_{\tau}-X_{\tau}^{\mathtt{A}}\right|\right\} ,\quad\forall t\in\mathbb{N}_{T},
\end{align}
Therefore, we get
\begin{align}
\sup_{\omega\in\widehat{\Omega}_{T}}\mathbb{E}_{\widetilde{{\cal P}}}\left\{ \left.\left|\widehat{\Lambda}_{t}-\widehat{\Lambda}_{t}^{\mathtt{A}}\right|\right|\mathscr{Y}_{t}\right\} \left(\omega\right) & \le K_{G}\left(T\right)\sup_{\tau\in\mathbb{N}_{t}}\mathbb{E}_{\widetilde{{\cal P}}}\left\{ \left|X_{\tau}-X_{\tau}^{\mathtt{A}}\right|\right\} ,\quad\forall t\in\mathbb{N}_{T}
\end{align}
and further taking the supremum over $t\in\mathbb{N}_{T}$ on both
sides, it must be true that
\begin{flalign}
\sup_{t\in\mathbb{N}_{T}}\sup_{\omega\in\widehat{\Omega}_{T}}\mathbb{E}_{\widetilde{{\cal P}}}\left\{ \left.\left|\widehat{\Lambda}_{t}-\widehat{\Lambda}_{t}^{\mathtt{A}}\right|\right|\mathscr{Y}_{t}\right\} \left(\omega\right) & \le K_{G}\left(T\right)\sup_{t\in\mathbb{N}_{T}}\sup_{\tau\in\mathbb{N}_{t}}\mathbb{E}_{\widetilde{{\cal P}}}\left\{ \left|X_{\tau}-X_{\tau}^{\mathtt{A}}\right|\right\} \nonumber \\
 & \equiv K_{G}\left(T\right)\sup_{t\in\mathbb{N}_{T}}\mathbb{E}_{\widetilde{{\cal P}}}\left\{ \left|X_{t}-X_{t}^{\mathtt{A}}\right|\right\} .\label{eq:CORE_Final}
\end{flalign}
Finally, if either
\begin{equation}
{\cal P}_{\left.X_{t}^{\mathtt{A}}\right|X_{t}}^{\mathtt{A}}\left(\left.\cdot\right|X_{t}\right)\stackrel[\mathtt{A}\rightarrow\infty]{{\cal W}}{\longrightarrow}\delta_{X_{t}}\left(\cdot\right)\equiv\mathds{1}_{\left(\cdot\right)}\left(X_{t}\right),\quad\forall t\in\mathbb{N}_{T},
\end{equation}
or 
\begin{equation}
X_{t}^{\mathtt{A}}\stackrel[\mathtt{A}\rightarrow\infty]{{\cal P}}{\longrightarrow}X_{t},\quad\forall t\in\mathbb{N}_{T}
\end{equation}
and given that since the members of $\left\{ X_{t}^{\mathtt{A}}\right\} _{\mathtt{A}\in\mathbb{N}}$
are almost surely bounded in ${\cal Z}$, the aforementioned sequence
is also uniformly integrable for all $t\in\mathbb{N}$, it must be
true that (see Lemma \ref{L1WEAK_Lemma})
\begin{equation}
\mathbb{E}_{\widetilde{{\cal P}}}\left\{ \left|X_{t}-X_{t}^{\mathtt{A}}\right|\right\} \underset{\mathtt{A}\rightarrow\infty}{\longrightarrow}0,\quad\forall t\in\mathbb{N}_{T}.
\end{equation}
Then, Lemma \ref{L1WEAK_LEMMA} implies that 
\begin{equation}
\sup_{t\in\mathbb{N}_{T}}\mathbb{E}_{\widetilde{{\cal P}}}\left\{ \left|X_{t}-X_{t}^{\mathtt{A}}\right|\right\} \underset{\mathtt{A}\rightarrow\infty}{\longrightarrow}0,
\end{equation}
which in turn implies the existence of the limit on the LHS of (\ref{eq:CORE_Final}).
QED.
\end{proof}

\subsection{\noindent Finishing the Proof of Theorem \ref{CONVERGENCE_THEOREM}}

Considering the absolute difference of the RHSs of (\ref{eq:CoM_Approx})
and (\ref{eq:CoM_Original}), it is true that (see Lemma \ref{Elementary_Lemma_1})
\begin{align}
\left|\dfrac{\mathbb{E}_{\widetilde{{\cal P}}}\left\{ \left.X_{t}\Lambda_{t}\right|\mathscr{Y}_{t}\right\} }{\mathbb{E}_{\widetilde{{\cal P}}}\left\{ \left.\Lambda_{t}\right|\mathscr{Y}_{t}\right\} }-\dfrac{\mathbb{E}_{\widetilde{{\cal P}}}\left\{ \left.X_{t}^{\mathtt{A}}\Lambda_{t}^{\mathtt{A}}\right|\mathscr{Y}_{t}\right\} }{\mathbb{E}_{\widetilde{{\cal P}}}\left\{ \left.\Lambda_{t}^{\mathtt{A}}\right|\mathscr{Y}_{t}\right\} }\right| & \equiv\left|\dfrac{\mathbb{E}_{\widetilde{{\cal P}}}\left\{ \left.X_{t}\widehat{\Lambda}_{t}\right|\mathscr{Y}_{t}\right\} }{\mathbb{E}_{\widetilde{{\cal P}}}\left\{ \left.\widehat{\Lambda}_{t}\right|\mathscr{Y}_{t}\right\} }-\dfrac{\mathbb{E}_{\widetilde{{\cal P}}}\left\{ \left.X_{t}^{\mathtt{A}}\widehat{\Lambda}_{t}^{\mathtt{A}}\right|\mathscr{Y}_{t}\right\} }{\mathbb{E}_{\widetilde{{\cal P}}}\left\{ \left.\widehat{\Lambda}_{t}^{\mathtt{A}}\right|\mathscr{Y}_{t}\right\} }\right|,
\end{align}
due to the fact that the increasing stochastic process
\begin{equation}
{\displaystyle \prod_{i\in\mathbb{N}_{t}}}\exp\left(\dfrac{1}{2}\left\Vert {\bf y}_{i}\right\Vert _{2}^{2}\right)\equiv\exp\left(\dfrac{1}{2}{\displaystyle \sum_{i\in\mathbb{N}_{t}}}\left\Vert {\bf y}_{i}\right\Vert _{2}^{2}\right)
\end{equation}
is $\left\{ \mathscr{Y}_{t}\right\} $-adapted. Then, we can write
\begin{flalign}
 & \hspace{-2pt}\hspace{-2pt}\hspace{-2pt}\hspace{-2pt}\hspace{-2pt}\hspace{-2pt}\hspace{-2pt}\hspace{-2pt}\hspace{-2pt}\hspace{-2pt}\hspace{-2pt}\hspace{-2pt}\left|\dfrac{\mathbb{E}_{\widetilde{{\cal P}}}\left\{ \left.X_{t}\widehat{\Lambda}_{t}\right|\mathscr{Y}_{t}\right\} }{\mathbb{E}_{\widetilde{{\cal P}}}\left\{ \left.\widehat{\Lambda}_{t}\right|\mathscr{Y}_{t}\right\} }-\dfrac{\mathbb{E}_{\widetilde{{\cal P}}}\left\{ \left.X_{t}^{\mathtt{A}}\widehat{\Lambda}_{t}^{\mathtt{A}}\right|\mathscr{Y}_{t}\right\} }{\mathbb{E}_{\widetilde{{\cal P}}}\left\{ \left.\widehat{\Lambda}_{t}^{\mathtt{A}}\right|\mathscr{Y}_{t}\right\} }\right|\nonumber \\
 & \equiv\dfrac{\left|\mathbb{E}_{\widetilde{{\cal P}}}\left\{ \left.X_{t}\widehat{\Lambda}_{t}\right|\mathscr{Y}_{t}\right\} \mathbb{E}_{\widetilde{{\cal P}}}\left\{ \left.\widehat{\Lambda}_{t}^{\mathtt{A}}\right|\mathscr{Y}_{t}\right\} -\right.\left.\mathbb{E}_{\widetilde{{\cal P}}}\left\{ \left.X_{t}^{\mathtt{A}}\widehat{\Lambda}_{t}^{\mathtt{A}}\right|\mathscr{Y}_{t}\right\} \mathbb{E}_{\widetilde{{\cal P}}}\left\{ \left.\widehat{\Lambda}_{t}\right|\mathscr{Y}_{t}\right\} \right|}{\mathbb{E}_{\widetilde{{\cal P}}}\left\{ \left.\widehat{\Lambda}_{t}\right|\mathscr{Y}_{t}\right\} \mathbb{E}_{\widetilde{{\cal P}}}\left\{ \left.\widehat{\Lambda}_{t}^{\mathtt{A}}\right|\mathscr{Y}_{t}\right\} }\nonumber \\
 & \le\dfrac{\left|\mathbb{E}_{{\cal P}}\left\{ \left.X_{t}\right|\mathscr{Y}_{t}\right\} \right|\left|\mathbb{E}_{\widetilde{{\cal P}}}\left\{ \left.\widehat{\Lambda}_{t}-\widehat{\Lambda}_{t}^{\mathtt{A}}\right|\mathscr{Y}_{t}\right\} \right|}{\mathbb{E}_{\widetilde{{\cal P}}}\left\{ \left.\widehat{\Lambda}_{t}^{\mathtt{A}}\right|\mathscr{Y}_{t}\right\} }+\dfrac{\left|\mathbb{E}_{\widetilde{{\cal P}}}\left\{ \left.X_{t}\widehat{\Lambda}_{t}-X_{t}^{\mathtt{A}}\widehat{\Lambda}_{t}^{\mathtt{A}}\right|\mathscr{Y}_{t}\right\} \right|}{\mathbb{E}_{\widetilde{{\cal P}}}\left\{ \left.\widehat{\Lambda}_{t}^{\mathtt{A}}\right|\mathscr{Y}_{t}\right\} },\quad\widetilde{{\cal P}},{\cal P}-a.e..\label{eq:Expected_Differences}
\end{flalign}
Let us first focus on the difference on the numerator of the second
ratio of the RHS of (\ref{eq:Expected_Differences}). Recalling that
$\delta\equiv\max\left\{ \left|a\right|,\left|b\right|\right\} $,
we can then write
\begin{flalign}
\left|\mathbb{E}_{\widetilde{{\cal P}}}\left\{ \left.X_{t}\widehat{\Lambda}_{t}-X_{t}^{\mathtt{A}}\widehat{\Lambda}_{t}^{\mathtt{A}}\right|\mathscr{Y}_{t}\right\} \right| & \le\mathbb{E}_{\widetilde{{\cal P}}}\left\{ \left.\left|X_{t}\widehat{\Lambda}_{t}-X_{t}^{\mathtt{A}}\widehat{\Lambda}_{t}^{\mathtt{A}}\right|\right|\mathscr{Y}_{t}\right\} \nonumber \\
 & \le\mathbb{E}_{\widetilde{{\cal P}}}\left\{ \left.\left|X_{t}\right|\left|\widehat{\Lambda}_{t}-\widehat{\Lambda}_{t}^{\mathtt{A}}\right|+\left|\widehat{\Lambda}_{t}^{\mathtt{A}}\right|\left|X_{t}-X_{t}^{\mathtt{A}}\right|\right|\mathscr{Y}_{t}\right\} \nonumber \\
 & \le\delta\mathbb{E}_{\widetilde{{\cal P}}}\left\{ \left.\left|\widehat{\Lambda}_{t}-\widehat{\Lambda}_{t}^{\mathtt{A}}\right|\right|\mathscr{Y}_{t}\right\} +\mathbb{E}_{\widetilde{{\cal P}}}\left\{ \left.\left|X_{t}-X_{t}^{\mathtt{A}}\right|\right|\mathscr{Y}_{t}\right\} \nonumber \\
 & \equiv\delta\mathbb{E}_{\widetilde{{\cal P}}}\left\{ \left.\left|\widehat{\Lambda}_{t}-\widehat{\Lambda}_{t}^{\mathtt{A}}\right|\right|\mathscr{Y}_{t}\right\} +\mathbb{E}_{\widetilde{{\cal P}}}\left\{ \left|X_{t}-X_{t}^{\mathtt{A}}\right|\right\} ,
\end{flalign}
On the other hand, for the denominator for (\ref{eq:Expected_Differences}),
it is true that
\begin{flalign}
\mathbb{E}_{\widetilde{{\cal P}}}\left\{ \left.\widehat{\Lambda}_{t}^{\mathtt{A}}\right|\mathscr{Y}_{t}\right\}  & \equiv\mathbb{E}_{\widetilde{{\cal P}}}\left\{ \left.\dfrac{\exp\left(-\dfrac{1}{2}{\displaystyle \sum_{i\in\mathbb{N}_{t}}}\overline{{\bf y}}_{i}^{\boldsymbol{T}}\left(X_{i}^{\mathtt{A}}\right){\bf C}_{i}^{-1}\left(X_{i}^{\mathtt{A}}\right)\overline{{\bf y}}_{i}\left(X_{i}^{\mathtt{A}}\right)\right)}{{\displaystyle \prod_{i\in\mathbb{N}_{t}}}\sqrt{\det\left({\bf C}_{i}\left(X_{i}^{\mathtt{A}}\right)\right)}}\right|\mathscr{Y}_{t}\right\} \nonumber \\
 & \ge\dfrac{\mathbb{E}_{\widetilde{{\cal P}}}\left\{ \left.\exp\left(-\dfrac{1}{2}{\displaystyle \sum_{i\in\mathbb{N}_{t}}}\overline{{\bf y}}_{i}^{\boldsymbol{T}}\left(X_{i}^{\mathtt{A}}\right){\bf C}_{i}^{-1}\left(X_{i}^{\mathtt{A}}\right)\overline{{\bf y}}_{i}\left(X_{i}^{\mathtt{A}}\right)\right)\right|\mathscr{Y}_{t}\right\} }{\sqrt{\lambda_{sup}^{N\left(t+1\right)}}}\nonumber \\
 & \ge\dfrac{\mathbb{E}_{\widetilde{{\cal P}}}\left\{ \left.\exp\left(-\dfrac{1}{2\lambda_{inf}}{\displaystyle \sum_{i\in\mathbb{N}_{t}}\left(\left\Vert {\bf y}_{i}\right\Vert _{2}+\mu_{sup}\right)^{2}}\right)\right|\mathscr{Y}_{t}\right\} }{\sqrt{\lambda_{sup}^{N\left(t+1\right)}}}\nonumber \\
 & \equiv\dfrac{\exp\left(-\dfrac{1}{2\lambda_{inf}}{\displaystyle \sum_{i\in\mathbb{N}_{t}}}\left(\left\Vert {\bf y}_{i}\right\Vert _{2}+\mu_{sup}\right)^{2}\right)}{\sqrt{\lambda_{sup}^{N\left(t+1\right)}}},\quad\widetilde{{\cal P}},{\cal P}-a.e.,
\end{flalign}
since the process ${\displaystyle \sum_{i\in\mathbb{N}_{t}}\left(\left\Vert {\bf y}_{i}\right\Vert _{2}+\mu_{sup}\right)}^{2}$
is $\left\{ \mathscr{Y}_{t}\right\} $-adapted. Now, from Lemma \ref{Lemma_WHP},
we know that 
\begin{equation}
\sup_{i\in\mathbb{N}_{t}}\left\Vert {\bf y}_{i}\right\Vert _{2}^{2}\le\sup_{i\in\mathbb{N}_{T}}\left\Vert {\bf y}_{i}\right\Vert _{2}^{2}<\gamma CN\left(1+\log\left(T+1\right)\right),\quad\forall t\in\mathbb{N}_{T},
\end{equation}
where the last inequality holds with probability at least $1-\left(T+1\right)^{1-CN}\exp\left(-CN\right)$,
under both base measures ${\cal P}$ and $\widetilde{{\cal P}}$,
for any finite constant $C\ge1$. Therefore, it can be trivially shown
that
\begin{align}
\mathbb{E}_{\widetilde{{\cal P}}}\left\{ \left.\widehat{\Lambda}_{t}^{\mathtt{A}}\right|\mathscr{Y}_{t}\right\}  & \ge\dfrac{\exp\left(-\dfrac{\left(\sqrt{\gamma CN\left(1+\log\left(T+1\right)\right)}+\mu_{sup}\right)^{2}\left(T+1\right)}{2\lambda_{inf}}\right)}{\sqrt{\lambda_{sup}^{N\left(t+1\right)}}}>0,\;\forall t\in\mathbb{N}_{T},
\end{align}
implying that
\begin{equation}
\inf_{t\in\mathbb{N}_{T}}\inf_{\omega\in\widehat{\Omega}_{T}}{\displaystyle \inf_{\mathtt{A}\in\mathbb{N}}\mathbb{E}_{\widetilde{{\cal P}}}\left\{ \left.\widehat{\Lambda}_{t}^{\mathtt{A}}\right|\mathscr{Y}_{t}\right\} \left(\omega\right)}>0,
\end{equation}
where $\widehat{\Omega}_{T}$ coincides with the event
\[
\left\{ \vphantom{\omega\in\Omega\left|\sup_{i\in\mathbb{N}_{t}}\left\Vert {\bf y}_{i}\right\Vert _{2}^{2}\right.}\omega\in\Omega\left|\sup_{i\in\mathbb{N}_{t}}\left\Vert {\bf y}_{i}\right\Vert _{2}^{2}<\right.\right.\left.\gamma CN\left(1+\log\left(T+1\right)\right),\forall t\in\mathbb{N}_{T}\vphantom{\omega\in\Omega\left|\sup_{i\in\mathbb{N}_{t}}\left\Vert {\bf y}_{i}\right\Vert _{2}^{2}\right.}\right\} 
\]
with ${\cal P}$,$\widetilde{{\cal P}}$-measure at least $1-\left(T+1\right)^{1-CN}\exp\left(-CN\right)$.
Of course, the existence of $\widehat{\Omega}_{T}$ follows from Lemma
\ref{Lemma_WHP}. Putting it altogether, (\ref{eq:Expected_Differences})
becomes (recall that the base measures ${\cal P}$ and $\widetilde{{\cal P}}$
are equivalent)
\begin{align}
\left|\mathbb{E}_{{\cal P}}\left\{ \left.X_{t}\right|\mathscr{Y}_{t}\right\} -{\cal E}^{\mathtt{A}}\left(\left.X_{t}\right|\mathscr{Y}_{t}\right)\right| & \le\dfrac{2\delta\mathbb{E}_{\widetilde{{\cal P}}}\left\{ \left.\left|\widehat{\Lambda}_{t}-\widehat{\Lambda}_{t}^{\mathtt{A}}\right|\right|\mathscr{Y}_{t}\right\} +\mathbb{E}_{\widetilde{{\cal P}}}\left\{ \left|X_{t}-X_{t}^{\mathtt{A}}\right|\right\} }{{\displaystyle \inf_{\mathtt{A}\in\mathbb{N}}\mathbb{E}_{\widetilde{{\cal P}}}\left\{ \left.\widehat{\Lambda}_{t}^{\mathtt{A}}\right|\mathscr{Y}_{t}\right\} }},\quad{\cal P},\widetilde{{\cal P}}-a.e.,
\end{align}
where $\delta\equiv\max\left\{ \left|a\right|,\left|b\right|\right\} $.
Taking the supremum both with respect to $\omega\in\widehat{\Omega}_{T}$
and $t\in\mathbb{N}_{T}$ on both sides, we get
\begin{flalign}
 & \hspace{-2pt}\hspace{-2pt}\hspace{-2pt}\hspace{-2pt}\hspace{-2pt}\hspace{-2pt}\hspace{-2pt}\hspace{-2pt}\hspace{-2pt}\hspace{-2pt}\hspace{-2pt}\hspace{-2pt}\hspace{-2pt}\hspace{-2pt}\hspace{-2pt}\hspace{-2pt}\hspace{-2pt}\hspace{-2pt}\hspace{-2pt}\hspace{-2pt}\hspace{-2pt}\hspace{-2pt}\hspace{-2pt}\hspace{-2pt}\hspace{-2pt}\hspace{-2pt}\hspace{-2pt}\hspace{-2pt}\hspace{-2pt}\hspace{-2pt}{\displaystyle \sup_{t\in\mathbb{N}_{T}}}{\displaystyle \sup_{\omega\in\widehat{\Omega}_{T}}}\left|\mathbb{E}_{{\cal P}}\left\{ \left.X_{t}\right|\mathscr{Y}_{t}\right\} \left(\omega\right)-{\cal E}^{\mathtt{A}}\left(\left.X_{t}\right|\mathscr{Y}_{t}\right)\left(\omega\right)\right|\nonumber \\
 & \le\dfrac{{\displaystyle \sup_{t\in\mathbb{N}_{T}}}\mathbb{E}_{\widetilde{{\cal P}}}\left\{ \left|X_{t}-X_{t}^{\mathtt{A}}\right|\right\} +2\delta{\displaystyle \sup_{t\in\mathbb{N}_{T}}}{\displaystyle \sup_{\omega\in\widehat{\Omega}_{T}}}\mathbb{E}_{\widetilde{{\cal P}}}\left\{ \left.\left|\widehat{\Lambda}_{t}-\widehat{\Lambda}_{t}^{\mathtt{A}}\right|\right|\mathscr{Y}_{t}\right\} \left(\omega\right)}{{\displaystyle \inf_{t\in\mathbb{N}_{T}}}{\displaystyle \inf_{\omega\in\widehat{\Omega}_{T}}}{\displaystyle \inf_{\mathtt{A}\in\mathbb{N}}\mathbb{E}_{\widetilde{{\cal P}}}\left\{ \left.\widehat{\Lambda}_{t}^{\mathtt{A}}\right|\mathscr{Y}_{t}\right\} \left(\omega\right)}},\label{eq:Converg_Thm_Proof_1}
\end{flalign}
with 
\begin{align}
\min\left\{ {\cal P}\left(\widehat{\Omega}_{T}\right),\widetilde{{\cal P}}\left(\widehat{\Omega}_{T}\right)\right\}  & \ge1-\dfrac{\exp\left(-CN\right)}{\left(T+1\right)^{CN-1}}.
\end{align}
Finally, calling Lemma \ref{RD_Derivatives_Lemma} and Lemma \ref{L1WEAK_LEMMA},
and since the denominators of the fractions appearing in (\ref{eq:Converg_Thm_Proof_1})
are nonzero, its RHS tends to zero as $\mathtt{A}\rightarrow\infty$,
under the respective hypotheses. Consequently, the LHS will also converge,
therefore completing the proof of the Theorem \ref{CONVERGENCE_THEOREM}.\hfill{}\ensuremath{\blacksquare}

\section{Conclusion}

In this paper, we have provided sufficient conditions for convergence
of approximate, asymptotically optimal nonlinear filtering operators,
for a general class of hidden stochastic processes, observed in a
conditionally Gaussian noisy environment. In particular, employing
a common change of measure argument, we have shown that using the
same measurements, but replacing the ``true'' state by an approximation
process, which converges to the former either in probability or in
the ${\cal C}$-weak sense, one can define an approximate filtering
operator, which converges to the optimal filter compactly in time
and uniformly in an event occurring with probability nearly $1$,
at the same time constituting a purely quantitative justification
of Egoroff's theorem for the problem of interest. The results presented
in this paper essentially provide a framework for analyzing the convergence
properties of various classes of approximate nonlinear filters (either
recursive or nonrecursive), such as existing grid based approaches,
which are known to perform well in various applications.

\section*{Appendix: Proof of Theorem \ref{thm:(Conditional-Bayes'-Theorem)}}

The proof is astonishingly simple. Let the hypotheses of the statement
of Theorem 1 hold true. To avoid useless notational congestion, let
us also make the identifications
\begin{flalign}
{\cal X}_{t} & \triangleq\left\{ X_{i}\right\} _{i\in\mathbb{N}_{t}}\quad\text{and}\quad{\cal Y}_{t}\triangleq\left\{ Y_{i}\right\} _{i\in\mathbb{N}_{t}}.\label{eq:X_Cal}
\end{flalign}
Now, by definition of the conditional expectation operator and since
we have assumed the existence of densities, it is true that
\begin{flalign}
\widehat{X}_{t} & \equiv\mathbb{E}_{{\cal P}}\left\{ \left.X_{t}\right|Y_{0},Y_{1},\ldots,Y_{t}\right\} \nonumber \\
 & =\int x_{t}f_{\left.X_{t}\right|{\cal Y}_{t}}\left(\left.x_{t}\right|{\cal Y}_{t}\right)\mathrm{d}x_{t}\nonumber \\
 & =\dfrac{{\displaystyle \int}x_{t}f_{\left(X_{t},{\cal Y}_{t}\right)}\left(x_{t},{\cal Y}_{t}\right)\mathrm{d}x_{t}}{f_{{\cal Y}_{t}}\left({\cal Y}_{t}\right)}\nonumber \\
 & \equiv\dfrac{\mathop{\mathlarger{\mathlarger{\int}}}x_{t}f_{t}\left(x_{0},x_{1},\ldots x_{t},{\cal Y}_{t}\right){\displaystyle \prod_{i=0}^{t}}\mathrm{d}x_{i}}{\mathop{\mathlarger{\mathlarger{\int}}}f_{t}\left(x_{0},x_{1},\ldots x_{t},{\cal Y}_{t}\right){\displaystyle \prod_{i=0}^{t}}\mathrm{d}x_{i}}\nonumber \\
 & \equiv\dfrac{\mathop{\mathlarger{\mathlarger{\int}}}x_{t}\lambda_{t}\widetilde{f}_{t}\left(\left\{ x_{i}\right\} _{i\in\mathbb{N}_{t}},{\cal Y}_{t}\right){\displaystyle \prod_{i=0}^{t}}\mathrm{d}x_{i}}{\mathop{\mathlarger{\mathlarger{\int}}}\lambda_{t}\widetilde{f}_{t}\left(\left\{ x_{i}\right\} _{i\in\mathbb{N}_{t}},{\cal Y}_{t}\right){\displaystyle \prod_{i=0}^{t}}\mathrm{d}x_{i}},\label{eq:Pre_Bayes}
\end{flalign}
where
\begin{flalign}
\lambda_{t} & \triangleq\dfrac{f_{t}\left(x_{0},x_{1},\ldots x_{t},Y_{0}\left(\omega\right),Y_{1}\left(\omega\right),\ldots,Y_{t}\left(\omega\right)\right)}{\widetilde{f}_{t}\left(x_{0},x_{1},\ldots x_{t},Y_{0}\left(\omega\right),Y_{1}\left(\omega\right),\ldots,Y_{t}\left(\omega\right)\right)}\nonumber \\
 & \equiv\lambda_{t}\left(\left\{ x_{i}\right\} _{i\in\mathbb{N}_{t}},{\cal Y}_{t}\left(\omega\right)\right)\in\mathbb{R}_{+},\quad\forall\omega\in\Omega,
\end{flalign}
constitutes a ``half ordinary function - half random variable''
likelihood ratio and where the condition (\ref{eq:ABSOLUTE_Densities})
ensures its boundedness. Of course, although the likelihood ratio
can be indeterminate when both densities are zero, the respective
points do not contribute in the computation of the relevant integrals
presented above, because these belong to measurable sets corresponding
to events of measure zero. From (\ref{eq:Pre_Bayes}) and by definition
of the conditional density of $\left\{ X_{i}\right\} _{i\in\mathbb{N}_{t}}$
given $\left\{ Y_{i}\right\} _{i\in\mathbb{N}_{t}}$, we immediately
get 
\begin{flalign}
\widehat{X}_{t} & \equiv\mathbb{E}_{{\cal P}}\left\{ \left.X_{t}\right|\mathscr{Y}_{t}\right\} \nonumber \\
 & \equiv\dfrac{\mathop{\mathlarger{\mathlarger{\int}}}x_{t}\lambda_{t}\widetilde{f}_{\left.{\cal X}_{t}\right|{\cal Y}_{t}}\left(\left.\left\{ x_{i}\right\} _{i\in\mathbb{N}_{t}}\right|{\cal Y}_{t}\right){\displaystyle \prod_{i=0}^{t}}\mathrm{d}x_{i}}{\mathop{\mathlarger{\mathlarger{\int}}}\lambda_{t}\widetilde{f}_{\left.{\cal X}_{t}\right|{\cal Y}_{t}}\left(\left.\left\{ x_{i}\right\} _{i\in\mathbb{N}_{t}}\right|{\cal Y}_{t}\right){\displaystyle \prod_{i=0}^{t}}\mathrm{d}x_{i}}\nonumber \\
 & =\dfrac{\mathbb{E}_{\widetilde{{\cal P}}}\left\{ \left.X_{t}\Lambda_{t}\right|Y_{0},Y_{1},\ldots,Y_{t}\right\} }{\mathbb{E}_{\widetilde{{\cal P}}}\left\{ \left.\Lambda_{t}\right|Y_{0},Y_{1},\ldots,Y_{t}\right\} }\nonumber \\
 & \equiv\dfrac{\mathbb{E}_{\widetilde{{\cal P}}}\left\{ \left.X_{t}\Lambda_{t}\right|\mathscr{Y}_{t}\right\} }{\mathbb{E}_{\widetilde{{\cal P}}}\left\{ \left.\Lambda_{t}\right|\mathscr{Y}_{t}\right\} },
\end{flalign}
which constitutes what we were initially set to show.\hfill{}\ensuremath{\blacksquare}

\bibliographystyle{IEEEtran}
\bibliography{IEEEabrv}

\end{document}